 \newtheorem{thm}{Theorem}[section]
 \newtheorem{cor}[thm]{Corollary}
 \newtheorem{lem}[thm]{Lemma}
 \newtheorem{prop}[thm]{Proposition}
 \newtheorem{claim}[thm]{Claim}
 \newtheorem{defn}[thm]{Definition}
 \newtheorem{rem}[thm]{Remark}
\newcommand{\eps}{\varepsilon}
\newcommand \Lcal   {\mathcal L}
\newcommand{\suml}{\sum\limits}
\newcommand{\fu}{\mathfrak{u}}
  \newcommand{\fv}{\mathfrak{v}}
  \newcommand{\fw}{\mathfrak{w}}
  \newcommand{\fz}{\mathfrak{z}}
  \newcommand{\ff}{\mathfrak{f}}
\newcommand \be     {\begin{equation}}
\newcommand \ee     {\end{equation}}
\newcommand {\parx}{\frac{d}{dx}}
\newcommand {\partt}{\frac{d}{dt}}
\newcommand \Acal   {\mathcal A}
 \newcommand{\set}[1]{\left\{#1\right\}}
 \numberwithin{equation}{section}
\begin{document}

\title [SPLINES, DISCRETE BIHARMONIC and EIGENVALUES]
 { SPLINE FUNCTIONS, THE DISCRETE BIHARMONIC OPERATOR AND APPROXIMATE EIGENVALUES}

\author{Matania Ben-Artzi}
\address{Matania Ben-Artzi: Institute of Mathematics, The Hebrew University, Jerusalem 91904, Israel}
\email{mbartzi@math.huji.ac.il}
\author{Guy Katriel}
\address{Guy Katriel: Department of Mathematics, Ort Braude College, Karmiel 21982, Israel}
\email{katriel@braude.ac.il}

%\email{benartzi@quixote.cims.nyu.edu}

\thanks{It is a pleasure to thank Jean-Pierre Croisille, Dalia Fishelov and Robert Krasny for very fruitful discussions. We thank M. Hansmann for calling our attention to the paper of Markus~\cite{markus}. }
% Walton Killam Memorial Scholarship.}

\subjclass[2010]{Primary 34L16; Secondary 34B24, 41A15,65L10}
%\classification{Primary 34L16; Secondary 34B24, 41A15,65L10}

\keywords{cubic splines, Hermitian derivative, discrete biharmonic operator, eigenvalues, Green's kernel}

\date{\today}

%\dedicatory{}

%\commby{Daniel J. Rudolph}

%%% ----------------------------------------------------------------------

\begin{abstract}
The biharmonic operator plays a central role in a wide array of physical models, notably in elasticity theory and the streamfunction formulation of the Navier-Stokes equations. The need for corresponding numerical simulations has led, in recent years, to the development of a discrete biharmonic calculus. The primary object of this calculus is a high-order compact discrete biharmonic operator  (DBO). The numerical results have been remarkably accurate, and have been corroborated by some rigorous proofs. However, there remained the ``mystery`` of the ``underlying reason'' for this success. This paper is a contribution in this direction,  expounding the strong connection between cubic spline functions (on an interval) and the  DBO. It is shown in particular that the (scaled) fourth-order distributional derivative of the cubic spline is identical to the action of the DBO on  grid functions. The DBO is constructed in terms of the discrete Hermitian derivative.  A remarkable fact is that the kernel of the inverse of the discrete operator is (up to scaling) equal to the  grid evaluation of the  kernel of $\Big[\Big(\parx\Big)^4\Big]^{-1} .$  Explicit expressions are presented for both  kernels. The relation between the (infinite) set of eigenvalues of the fourth-order Sturm-Liouville problem and the finite set of eigenvalues of the discrete biharmonic operator is studied, and the discrete eigenvalues are proved to converge (at an ``optimal'' $O(h^4)$ rate) to the continuous ones. Another remarkable consequence is the validity of a \textit{comparison principle.}
It is well known that there is no maximum principle for the fourth-order equation.  However, a positivity result is derived, both  for the
continuous and the discrete biharmonic equation, showing that in both cases the kernels are order preserving.

\end{abstract}

%%% ----------------------------------------------------------------------
\maketitle
%%% ----------------------------------------------------------------------
\section{\textbf{INTRODUCTION}}
The operator $\Big(\parx\Big)^4$ on the interval $[0,1]$ is certainly the simplest conceivable example of a fourth-order elliptic one-dimensional operator. As such, its spectral theory is very well understood  ~\cite[Chapter 5]{davies} or ~\cite{everitt}. In classical terminology, its study is labeled as a ``fourth-order Sturm-Liouville theory''. The numerical computation of the eigenvalues was carried out using a ``Shannon-type'' sampling method in ~\cite{boumenir} , by ``matrix methods'' in ~\cite{rattana} and by finite element methods in ~\cite{andrew}.  On the other hand, the analogous  ``discrete'' treatment leaves much to be desired. By this we mean the construction of approximating finite difference operators so that their eigenvalues can be shown to converge to the spectrum of the differential operator. Clearly, a significant question
is the possibility of obtaining a high rate of convergence in this case.

The goal of this paper is to  fill this gap, by exploring the very interesting structural similarities between $\Big(\parx\Big)^4$ and a suitable discrete biharmonic operator (DBO)  $\delta_x^4.$ The bridge between ``continuous'' and `` discrete'' is achieved by using the classical cubic spline functions. It is well known that the convergence of finite-dimensional approximations to an infinite-dimensional, unbounded differential operator, does not entail the convergence of the respective spectra. One of the main results here is to provide an affirmative answer to this convergence property. The proof relies on the strong connection between the DBO and differential operations on spline functions.

 A basic tool is the \textit{ discrete Hermitian derivative} on an interval, that gives a fourth-order accurate approximation to the derivative of a smooth function. It has been the cornerstone in the construction of a fourth-order discrete approximation to the one-dimensional biharmonic operator ~\cite{abarbanel} and its extension to the full fourth-order Sturm-Liouville problem ~\cite{sturm}.  In the two-dimensional case, it has been used in the construction of a compact high-order finite difference scheme for the Navier-Stokes system in the pure streamfunction formulation ~\cite[Part II]{book}. In this paper the full discrete elliptic theory of the DBO is exploited in the study of the discrete spectrum and its asymptotic behavior as the number of grid points increases to infinity.

    The structure of the paper is as follows.

    In Section ~\ref{secsetupcubic} we recall the basic (classical) construction of cubic spline functions on an interval. For the convenience of the reader we provide the full (and standard) proofs of the essential properties of these functions, those that are used in the sequel.

    In Section ~\ref{subsecdiffoperators} we recall the definitions of the discrete finite difference operators, and in particular introduce the Hermitian derivative and the discrete biharmonic operator $\delta_x^4.$

    In Section ~\ref{secsplinehermit} we establish the equality of the Hermitian derivative and the derivative of the interpolating cubic spline. This is a fundamental fact connecting the two non-local fourth-order approximations of the derivative. We were unable to locate this remarkable fact in the literature, even though we are still convinced that such a classical fact should be well-known.

    Then the connection between the discrete biharmonic operator $\delta_x^4$  and the interpolating cubic spline function is established. It is in fact the main theme of this paper. Recall that the cubic spline is a $C^2$ functions, with finite jumps of the third-order derivatives at grid points. The result here (Proposition ~\ref{cor3rdspljump}) is that the sizes of these jumps are determined by the DBO acting on the grid values. We have not been able to locate such a result in the literature, even though it seems to be such a fundamental fact.

    This connection enables us to prove, in Section ~\ref{secpositivity} , positivity results for the continuous and discrete fourth-order operators (see Proposition ~\ref{propmaximumcont} and Proposition ~\ref{propmaximumdisc}). Recall that there is no maximum principle for the fourth-order operator. Once again, it seems to us that the positivity result should exist already in the literature, but we have not been able to locate it.

    In Section ~\ref{seckernel} we first give the explicit form of the kernel (Green's function) of the continuous operator. In the first instance, this kernel acts in $L^2(0,1).$ We then extend it to  the negative Sobolev space $H^{-2}(0,1).$ This space includes all finite measures, and in particular all grid functions. Using the connection to cubic spline functions we establish the remarkable result that the discrete resolvent (namely, the kernel of  $(\delta_x^4)^{-1}$) is just the grid evaluation of the continuous kernel, up to scaling. Indeed, this can be viewed as an alternative, very natural, definition of the compact discrete biharmonic operator.

    Finally, Section ~\ref{seceigenvalues} is concerned with the \textit{eigenvalues} of both the continuous and discrete operators. These eigenvalues (more precisely their inverses) are studied in terms of the ``kernel tools'' developed in the previous sections; the established connection between the discrete and continuous kernels implies that the discrete eigenvalues are actually obtained by a ``Nystr\"{o}m method''~\cite{nystrom}.

    The highlight of this section (and one of the main results of the entire paper) is the proof of the convergence of the discrete eigenvalues  to the continuous ones, at an ``optimal'' fourth-order rate (Theorem ~\ref{thmoptimalallevs}). This result is  obtained by combining two ingredients:
     \begin{itemize}
     \item A suitable adaptation (Lemma ~\ref{lemconvergence}) of  a more general abstract convergence theorem ~\cite{kato, markus}. However, we have chosen to provide a self-contained, much simpler, proof, that builds on the analytic theory of finite-dimensional perturbations, as expounded in Kato's classical book ~\cite{kato-book}.
         \item The dependence of the eigenvalues on the respective kernels, see Proposition ~\ref{propdistLambdah}.
     \end{itemize}

In Appendix ~\ref{sechaggaiexplicit} we use the approach of ``generating polynomials'' in order to give yet another explicit construction of the kernel of the discrete resolvent $(\delta_x^4)^{-1}.$ In fact, this classical method enables us to establish a totally different point-of-view concerning the compact discrete operators used here, beginning with the Hermitian derivative. This approach has the advantage of being directly related to the definitions of the discrete operators, avoiding the ``mediation'' of spline functions. It
    is potentially applicable as a computational approach to similar (discrete) problems.

\section{\textbf{THE BASIC SETUP for CUBIC SPLINES}}\label{secsetupcubic}
In what follows we consider the interval $\Omega=[0,1]$ with a uniform grid
  $$x_j=jh,\quad 0\leq j\leq N,\quad h=\frac{1}{N}.$$
  We fix values $f=\set{f_j}^N_{j=0}$ so that $f_0=f_N=0,$ and consider the family
  $$\Acal=\set{u\in H^2_0(\Omega),\quad u_j=f_j,\quad j=0,1,...,N}.$$

  The space $H^2_0(\Omega)$ is the space of functions having first and second (distrbutional) derivatives in $L^2(\Omega)$
  and vanishing, with their first-order derivatives, at the endpoints.

  It is well known that the norm in $H^2_0(\Omega)$ can be defined by
  $$\|u\|_{H^2_0(\Omega)}^2=\int_0^1|u''(x)|^2dx,$$
  and we shall refer henceforth to this norm.

  We consider the functional
  $$I(u)=\int\limits_0^1|u''(x)|^2dx,\quad u\in H^2_0(\Omega).$$
       We are interested in a minimizer for this functional, restricted to $\Acal.$ Since the properties of this minimizer will be essential in the rest of this paper, we provide here the details of the proof of this classical fact of the calculus of variations. A purely algebraic proof can be found in ~\cite[Theorem 3.4.3]{spline-book} or ~\cite[Chapter IV, Cubic Spline Interpolation]{deboor0}.
   \begin{claim} The functional has a unique minimizer on $\Acal$ , which we designate as $s_f,$
      $$I(s_f)<I(g),\quad s_f\ne g\in\Acal.$$
      \end{claim}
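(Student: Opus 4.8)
The plan is to establish existence and uniqueness of the minimizer by the standard direct method of the calculus of variations, exploiting the fact that $I(u)=\|u\|_{H^2_0(\Omega)}^2$ is precisely the squared norm on the Hilbert space $H^2_0(\Omega)$. First I would verify that the admissible class $\Acal$ is a nonempty, closed, convex subset of $H^2_0(\Omega)$. Nonemptiness is clear since one can exhibit a smooth function (e.g.\ a suitable polynomial or piecewise-polynomial interpolant) that lies in $H^2_0(\Omega)$ and matches the prescribed values $f_j$ at the grid points. Convexity is immediate because the interpolation constraints $u_j=f_j$ are affine. For closedness, the key observation is that the point-evaluation functionals $u\mapsto u(x_j)$ are continuous on $H^2_0(\Omega)$: by the Sobolev embedding $H^2_0(\Omega)\hookrightarrow C^1(\overline{\Omega})$ in one dimension, $H^2_0$-convergence implies uniform convergence, so the constraints are preserved under limits.

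Once $\Acal$ is known to be a nonempty closed convex set in a Hilbert space, uniqueness and existence of the minimizer follow from an abstract principle: a strictly convex, coercive, weakly lower semicontinuous functional attains a unique minimum on such a set. Here $I(u)=\|u\|^2$ is the squared Hilbert-space norm, which is strictly convex and coercive, so I would simply invoke the Hilbert projection theorem (nearest-point projection onto a nonempty closed convex set), or equivalently the Lax--Milgram / parallelogram argument, to conclude that there is a unique $s_f\in\Acal$ minimizing $I$. For the strict inequality $I(s_f)<I(g)$ for all $g\ne s_f$ in $\Acal$, I would use strict convexity: writing the parallelogram law for the norm, if two distinct points attained the same minimal value their midpoint (which lies in $\Acal$ by convexity) would have strictly smaller norm, a contradiction.

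Alternatively, and perhaps in the spirit of a self-contained variational proof, I would take a minimizing sequence $\{u_n\}\subset\Acal$ with $I(u_n)\to\inf_{\Acal}I$. Since $I(u_n)=\|u_n\|^2$ is bounded, the sequence is bounded in the Hilbert space $H^2_0(\Omega)$, so it has a weakly convergent subsequence $u_{n_k}\rightharpoonup s_f$. Weak convergence together with the compact embedding $H^2_0(\Omega)\hookrightarrow C^1(\overline{\Omega})$ gives $u_{n_k}(x_j)\to s_f(x_j)$, hence $s_f\in\Acal$; and weak lower semicontinuity of the norm, $\|s_f\|\le\liminf\|u_{n_k}\|$, shows $s_f$ attains the infimum. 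Uniqueness then again comes from strict convexity of the norm.

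The main obstacle, or rather the one point requiring genuine care, is the continuity and closedness argument: one must justify that the pointwise constraints are stable under the relevant convergence, which rests on the one-dimensional Sobolev embedding $H^2(\Omega)\hookrightarrow C^1(\overline\Omega)$ (and its compactness). Everything else is a direct application of the elementary Hilbert-space theory of convex minimization. I expect the author's proof to follow essentially this route, and likely to make the Euler--Lagrange characterization explicit as well, since the stated purpose is to extract the structural properties of $s_f$ (namely that it is the interpolating cubic spline, being piecewise cubic with the natural boundary conditions $s_f''(0)=s_f''(1)=0$ that arise from the free variations at the endpoints).
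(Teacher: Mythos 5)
Your proposal is correct and follows essentially the same route as the paper, which likewise notes first that existence and uniqueness follow from general strict-convexity principles and then gives exactly your self-contained variant --- minimizing sequence, boundedness in $H^2_0(\Omega)$, weak subsequential limit, weak lower semicontinuity of the norm --- with uniqueness via the midpoint of two putative minimizers (the paper runs the strict-convexity step through the equality case of the Cauchy--Schwarz inequality, forcing $w''=\pm v''$ and then $w=v$ from the boundary and nodal constraints, which is your parallelogram argument in different clothing); if anything, your explicit verification that $\Acal$ is weakly closed via the embedding $H^2_0(\Omega)\hookrightarrow C^1(\overline{\Omega})$ supplies a point the paper passes over silently when it asserts that the weak limit is a minimizer. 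The only slip is your closing speculation about natural boundary conditions $s_f''(0)=s_f''(1)=0$: since $\Acal\subseteq H^2_0(\Omega)$, the endpoint conditions $u(0)=u'(0)=u(1)=u'(1)=0$ are essential (built into the space, so there are no free endpoint variations), and the minimizer is the clamped ``type I'' spline with $s_f'(0)=s_f'(1)=0$ --- though this concerns the subsequent characterization of $s_f$, not the existence--uniqueness claim you actually proved.
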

      \begin{proof}
        In fact, the functional $I(u)$ is strictly convex and $\Acal$ is convex, so the existence of a unique minimizer is guaranteed by general principles.

        However, for the convenience of the reader, we provide a simple, straightforward proof.
      Since clearly $\Acal\neq \emptyset,$ and $I(u)\geq 0,$ we can define
        $$I_{min}=\inf\limits_{u\in\Acal} I(u).$$
         Let $\set {u_n}_{n=1}^\infty\subseteq\Acal$ be a sequence such that
          $$I(u_n)\xrightarrow[n\to\infty]{}I_{min}.$$
           %            On the other hand,

             The boundedness of  $\set {u_n}_{n=1}^\infty$ in the Hilbert space $H^2_0(\Omega)$ implies
            ~\cite[Appendix D.4]{evans} that, again passing to a subsequence without changing index, there is a weak limit,
            $$u_n\xrightarrow[n\to\infty]{w}v\in H^2_0(\Omega).$$
            This weak limit satisfies
            $$I(v)\leq \liminf\limits_{n\to\infty}I(u_n),$$
            so that it is indeed a minimizer.
            %(weak convergence).

           % Note that this weak limit must be the same function $v.$
           % In particular
           % $$I(v)\leq\liminf_{n\to\infty}I(u_n)=I_{min},$$
           % Let $\psi\in C^2_0(\Omega)$ be such that
%             \be\label{eqvpsi}\int_\Omega |v''(x)-\psi(x)|^2dx<\eta.\ee
%             The weak convergence implies (by definition) that $\lim\limits_{n\to\infty}\int_\Omega u_n''(x)\psi(x)dx=\int_\Omega v''(x)\psi(x)dx,$
%                        and  we may assume that
%             $$\Big|\int_\Omega u_n''(x)\psi(x)dx-\int_\Omega v''(x)\psi(x)dx\Big|<\eta,\quad n=1,2,\ldots.$$
%             We therefore have
%             $$\frac12\int_\Omega[ |u_n''(x)|^2+|\psi(x)|^2]dx\geq \int_\Omega u_n''(x)\psi(x)dx\geq \int_\Omega v''(x)\psi(x)dx-\eta.$$
%             Incorporating ~\eqref{eqvpsi} we conclude that
%             $$\frac12\int_\Omega[ |u_n''(x)|^2+|v''(x)|^2]dx+\frac12\eta\geq \int_\Omega |v''(x)|^2dx-\sqrt{\eta\int_\Omega | v''(x)|^2dx}-\eta.$$
%
%             Thus$$\frac12 I_{min}+\eta\geq \frac12\int_\Omega |v''(x)|^2dx-\sqrt{\eta\int_\Omega | v''(x)|^2dx}-\eta.$$
%            Since $\eta>0$ is arbitrary it follows that $v$ is indeed a minimizer.

            To prove the uniqueness of such a minimizer, suppose that $w\in H^2_0(\Omega)$ is another minimizer.
             Let $r=\frac{v+w}{2}.$  Clearly $r\in\Acal,$ and,
                       by the Cauchy-Schwarz inequality,
             $$\int_0^1|r''(x)|^2dx=\frac14\Big[\int_0^1|v''(x)|^2dx+\int_0^1|w''(x)|^2dx+2\int_0^1v''(x)w''(x)dx\Big]\leq I_{min}.$$
             It follows that $r$ is also a minimizer and in particular
             $$\int_0^1v''(x)w''(x)dx=I_{min}=\Big[\int_0^1|v''(x)|^2dx\Big]^{\frac12}\Big[\int_0^1|w''(x)|^2dx\Big]^{\frac12}.$$
             As is well known, equality in the Cauchy-Schwarz inequality implies that $w''=\pm v''.$ The boundary conditions now yield $w=\pm v,$ and the constraints at the nodes finally force $w=v.$
      \end{proof}
      \begin{claim}\label{sf}
      \begin{enumerate}
      \item $s_f$ is a cubic polynomial in each interval $[x_j,x_{j+1}],\quad j=0,1,...,N-1.$
      \item $s_f\in C^2_0(\Omega).$
      \item The previous two properties , supplemented by the constraints $s_f(x_j)=f_j,\quad j=1,...,N-1, $ and $s_f(x_0)=s'_f(x_0)=s_f(x_N)=s'_f(x_N)=0 $ determine $s_f$ uniquely.
      \end{enumerate}
      \end{claim}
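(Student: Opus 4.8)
The plan is to characterize $s_f$ through the Euler--Lagrange equation of the constrained minimization problem and then read off the three assertions in turn. Since $\Acal$ is an affine subspace of $H^2_0(\Omega)$, the admissible variations are exactly the functions in the linear space $V=\set{\phi\in H^2_0(\Omega):\ \phi(x_j)=0,\ j=0,1,\dots,N}$, because $(s_f+t\phi)(x_j)=f_j$ together with $s_f(x_j)=f_j$ forces $\phi(x_j)=0$. As $t\mapsto I(s_f+t\phi)=I(s_f)+2t\int_0^1 s_f''\phi''\,dx+t^2\int_0^1|\phi''|^2\,dx$ is a quadratic with a minimum at $t=0$, differentiation yields the variational identity
\[
\int_0^1 s_f''(x)\,\phi''(x)\,dx=0\qquad\text{for all }\phi\in V.
\]
Everything is extracted from this identity.

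For part (1), I would first restrict to variations supported in a single open subinterval, i.e. $\phi\in C_c^\infty(x_j,x_{j+1})$ extended by zero; such $\phi$ automatically lie in $V$. The identity then says that the second distributional derivative of $s_f''$ vanishes on $(x_j,x_{j+1})$, so $s_f''$ is affine there and hence $s_f$ is a cubic polynomial on $[x_j,x_{j+1}]$. This is the only step in which $s_f''$ is handled as a mere $L^2$ object; once it is known to be piecewise affine, it is smooth on each closed subinterval, and the one-sided limits $s_f''(x_k^\pm)$ and the (piecewise constant) third derivative $s_f'''$ are well defined.

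For part (2), I would return to the full identity with general $\phi\in V$ and integrate by parts twice on each subinterval, using $s_f^{(4)}=0$ and $\phi(x_j)=0$ to annihilate all terms except $[\,s_f''\phi'\,]$ at the subinterval endpoints. Summing over $j$, and recalling that $\phi'$ is continuous (as $H^2_0(\Omega)\hookrightarrow C^1$) with $\phi'(0)=\phi'(1)=0$, the sum collapses to $\sum_{k=1}^{N-1}\big(s_f''(x_k^-)-s_f''(x_k^+)\big)\phi'(x_k)=0$. Since the interior slopes $\phi'(x_k)$ can be assigned arbitrarily by a suitable $\phi\in V$ (for instance a $C^1$ piecewise-cubic Hermite bump with prescribed nodal derivatives and all nodal values zero, which indeed lies in $H^2_0(\Omega)$), each jump $s_f''(x_k^-)-s_f''(x_k^+)$ must vanish. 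Thus $s_f''$ is continuous and $s_f\in C^2$, while the vanishing of $s_f,s_f'$ at $0,1$ is inherited from $H^2_0(\Omega)$, giving $s_f\in C^2_0(\Omega)$.

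For part (3), I would prove uniqueness by an energy computation on the difference. If $s_1,s_2$ both satisfy the stated properties and constraints, set $w=s_1-s_2$; then $w$ is piecewise cubic, $w\in C^2_0(\Omega)$, and $w(x_j)=0$ for all $j$. Integrating by parts and using the continuity of $w''$ (so no jump terms appear) together with $w'(0)=w'(1)=0$, and then using that $w'''$ is constant on each subinterval while $w(x_{j+1})-w(x_j)=0$, I obtain $\int_0^1|w''|^2\,dx=0$. Hence $w''\equiv0$, so $w$ is affine, and $w(0)=w'(0)=0$ force $w\equiv0$. The main obstacle is the ordering and regularity bookkeeping in parts (1)--(2): the boundary and jump terms in the integrations by parts become meaningful only after one knows $s_f$ is piecewise polynomial, so the piecewise-cubic structure must be established first and the continuity across nodes analysed afterwards; one must also verify that the interior values $\phi'(x_k)$ are genuinely independent, which is precisely what converts the single scalar identity into the $N-1$ separate smoothness conditions.
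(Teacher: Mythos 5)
Your argument is correct, and for parts (1) and (2) it is essentially the paper's own route: both start from the first-order condition $\int_0^1 s_f''\phi''\,dx=0$ for all $\phi\in H^2_0(\Omega)$ vanishing at the nodes, obtain the piecewise-cubic structure from test functions in $C_c^\infty(x_j,x_{j+1})$, and remove the jumps of $s_f''$ by exploiting the freedom in the nodal data of the test function. The paper disposes of part (2) in a single sentence (``the test functions can have arbitrary values $v'(x_j),v''(x_j)$''), and your piecewise-cubic Hermite bump together with the explicit collapse of the boundary terms to $\sum_{k=1}^{N-1}\big(s_f''(x_k^-)-s_f''(x_k^+)\big)\phi'(x_k)=0$ supplies exactly the detail that sentence suppresses; as your computation shows, only the slopes $\phi'(x_k)$ are actually needed, since the $s_f'''$-terms are annihilated by $\phi(x_k)=0$ (and indeed the jump of $s_f'''$ must \emph{not} be forced to vanish --- it is the content of Proposition~\ref{cor3rdspljump}). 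The genuine divergence is in part (3). The paper argues by a dimension count: the space $V$ of $C^2$ piecewise cubics with the homogeneous endpoint conditions satisfies $\dim V\leq 4N-3(N-1)-4=N-1$, and since the minimization produces an element of $V$ for every data vector, the interior-node evaluation map $V\to\RR^{N-1}$ is onto, hence bijective, giving uniqueness. You instead prove uniqueness directly by the energy identity: for the difference $w$ of two solutions, one integration by parts per subinterval, using continuity of $w''$ and $w'$, the endpoint conditions $w'(0)=w'(1)=0$, and $w'''$ piecewise constant against $w(x_{j+1})-w(x_j)=0$, yields $\int_0^1|w''|^2dx=0$ and hence $w\equiv 0$. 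Your route is the classical first integral relation of spline theory and is self-contained: it does not need to know that every data vector is interpolable, a fact the paper's surjectivity step imports from the existence of minimizers. What you give up is the byproduct $\dim V=N-1$, which the paper records; but the downstream uses (e.g.\ the linearity argument in Claim~\ref{linear}) rely only on uniqueness plus closedness of $V$ under sums, so your version would support the rest of the paper equally well.
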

      \begin{proof}
      The basic property of $s_f$ is that
       $$\int\limits_0^1s_f''(x)v''(x) dx=0,$$
       for all $v\in H^2_0(\Omega)$ vanishing at all nodes $x_j,\quad 0\leq j\leq N.$

       We obtain the first property by taking any $v\in C^\infty_0(x_j,x_{j+1}),$ and integrating twice by parts. The second property follows by observing that the test functions can have arbitrary values $v'(x_j),v''(x_j).$

       Finally, consider the space  $V\subseteq\set{s\in C^2(\Omega)\cap H^2_0(\Omega)},$  such that $s\in V$ is a cubic polynomial in each interval $[x_j,x_{j+1}],\quad j=0,1,...,N-1.$ Since $s\in V$ has four parameters in each interval $[x_j,x_{j+1}],$ subtracting the number of constraints at all interior nodes and the endpoints yields
         $$dim \,V\leq 4N-3(N-1)-4=N-1.$$
          Since on the other hand we have for every $f=\set{f_j}^N_{j=0}$ so that $f_0=f_N=0,$ a corresponding $s_f\in V,$ it follows that $dim\,V=N-1,$ so that $s_f$ is the unique function in $V$ satisfying the constraints.
      \end{proof}
      \begin{defn} The function $s_f$ is called the (``type I'') \textbf{cubic spline} corresponding to the constraints
         $$s_f(x_j)=f_j,\quad j=1,...,N-1, \,\,s_f(x_0)=s'_f(x_0)=s_f(x_N)=s'_f(x_N)=0. $$
      \end{defn}
      \begin{claim}\label{linear} Consider the vectors $f=\set{f_j}^N_{j=0}$ such that $f_0=f_N=0.$ Then the map $f\hookrightarrow s_f\in H^2_0(\Omega)$ is one-to-one and linear.
      \begin{rem} When we introduce discrete spaces of grid functions in the following section, we shall denote by $l^2_{h,0}$ the space of such vectors $f.$
      \end{rem}
      \begin{proof} The fact that the map is one-to-one is obvious since $s_f$ determines $f.$ The linearity follows from the uniqueness part in Claim ~\ref{sf}. Indeed, if $s_f,s_g$ correspond to $f,g=\set{f_j,g_j}^N_{j=0}$ so that $g_0=g_N=f_0=f_N=0,$ respectively, then $s_f+s_g\in V$ (the space introduced in the proof of Claim ~\ref{sf}) and it satisfies the constraints corresponding to $f+g,$ hence $s_{f+g}=s_f+s_g.$

      \end{proof}
      \end{claim}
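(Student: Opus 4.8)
The plan is to reduce everything to the uniqueness characterization established in Claim~\ref{sf}(3), namely that $s_f$ is the \emph{unique} element of the space $V$ (the piecewise cubic $C^2_0$ functions) meeting the interpolation and boundary constraints associated to $f$. Since $V$ is itself a linear space and all of the constraints are linear — indeed homogeneous at the endpoints, where $f_0=f_N=0$ — linearity of the map $f\mapsto s_f$ should follow immediately from uniqueness, while injectivity is essentially automatic.

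First I would dispose of injectivity: by construction $s_f(x_j)=f_j$ for every $j$, so the grid values $f=\set{f_j}_{j=0}^N$ can be recovered from $s_f$ simply by sampling at the nodes. Hence $s_f=s_g$ forces $f=g$, and the map is one-to-one.

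For linearity, given two admissible vectors $f$ and $g$, I would consider $s_f+s_g$. Because $V$ is closed under addition — the sum of two piecewise cubics is piecewise cubic, and $C^2_0(\Omega)$ is a vector space — we have $s_f+s_g\in V$. Moreover $(s_f+s_g)(x_j)=f_j+g_j$ at the interior nodes, and at the endpoints both functions vanish together with their first derivatives, so $s_f+s_g$ satisfies exactly the constraints that define $s_{f+g}$. By the uniqueness statement of Claim~\ref{sf}(3), $s_{f+g}=s_f+s_g$. The identical argument applied to $c\,s_f\in V$ yields $s_{cf}=c\,s_f$ for every scalar $c$.

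I do not expect a genuine obstacle here: the only points requiring any care are the verification that $V$ is a linear subspace (immediate) and the observation that the homogeneous endpoint conditions are consistent with the interpolation values $f_0=f_N=0$ — both of which were already built into the definition of $V$ in the proof of Claim~\ref{sf}. Once those are in place, the entire statement is a formal consequence of ``uniqueness of an element of a linear space determined by linear constraints,'' with no estimates or analytic input needed beyond what Claim~\ref{sf} already supplies.
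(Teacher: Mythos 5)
Your proposal is correct and follows essentially the same route as the paper: injectivity by sampling $s_f$ at the nodes, and linearity by noting $s_f+s_g\in V$ satisfies the constraints for $f+g$ and invoking the uniqueness part of Claim~\ref{sf}. Your explicit treatment of scalar multiples $s_{cf}=c\,s_f$ is a small completeness improvement over the paper, which records only additivity, but it is the identical argument.
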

       \section{\textbf{ SETUP and DEFINITION OF THE DISCRETE OPERATORS}}\label{subsecdiffoperators}
We equip the interval $\Omega=[0,1]$ with a uniform grid
  $$x_j=jh,\quad 0\leq j\leq N,\quad h=\frac{1}{N}.$$
     The approximation is carried out by grid functions $\fv$ defined on $\set{x_j,\,0\leq j\leq
     N}.$ The space of these grid functions is denoted by $l^2_h.$
     For their components we use either $\fv_j$ or $\fv(x_j).$

     For every smooth function $f(x)$ we define its associated grid function
     \be\label{eqdefustar}f^\ast_j=f(x_j),\quad 0\leq j\leq N.\ee

     The discrete $l^2_h$ scalar product is defined by
$$(\fv,\fw)_h=h\suml_{j=0}^N\fv_j\fw_j,$$

     and the corresponding norm  is
     \be\label{eqdefl2norm}|\fv|_h^2=h\suml_{j=0}^N\fv_j^2.\ee

     For linear operators $\mathcal{A}:l^2_h\to l^2_h$ we use
     $|\mathcal{A}|_h$ to denote the operator norm.

     The discrete sup-norm is
      \be\label{eqdeflinfnorm}|\fv|_\infty=\max_{0\leq j\leq N}\set{|\fv_j|}.\ee

      The discrete homogeneous space of grid functions is defined by
      \be\label{eqdefdiscl20}l^2_{h,0}=\set{\fv,\,\,\fv_0=\fv_N=0}.\ee

      Given $\fv\in l^2_{h,0}$ we introduce the basic (central) finite difference operators

       \be\label{eqdefinedelta2}\aligned
       (\delta_x\fv)_j=\frac{1}{2h}(\fv_{j+1}-\fv_{j-1}),\quad 1\leq j\leq N-1,\\
       (\delta^2_x\fv)_j=\frac{1}{h^2}(\fv_{j+1}-2\fv_j+\fv_{j-1}),\quad 1\leq j\leq N-1,\\
        \endaligned\ee

    The cornerstone of our approach to finite difference operators is the introduction of the \textbf{Hermitian derivative} of $\fv\in l^2_{h,0},$ that will replace $\delta_x.$ It will serve not only in approximating (to fourth-order of accuracy) first-order derivatives, but also as a fundamental building block in the construction of finite difference approximations to higher-order derivatives.

    First, we introduce the ``Simpson operator''

  \be\label{eqdefsigmasimpson}(\sigma_x\fv)_j=\frac16\fv_{j-1}+\frac23\fv_{j}+\frac16\fv_{j+1},\quad 1\leq j\leq N-1.\ee
       Note the operator relation (valid in $l^2_{h,0}$)
       \be\sigma_x=I+\frac{h^2}{6}\delta_x^2,
       \ee
       so that $\sigma_x$ is an ``approximation to the identity''.

    The Hermitian derivative $\fv_x$ is now defined by
    \be\label{eqdefhermit} (\sigma_x\fv_x)_j=(\delta_x\fv)_j,\quad 1\leq j\leq N-1.
    \ee
        \begin{rem}\label{remzerodiscderiv} In the definition ~\eqref{eqdefhermit}, the values of $(\fv_x)_j,\,\,j=0,N,$ need to be provided , in order to make
        sense of the left-hand side (for $j=1,N-1$). If not otherwise specified, \textit{we shall henceforth assume that
          $\fv_x\in l^2_{h,0},$ namely}
          $$(\fv_x)_0=(\fv_x)_{N}=0.$$
          \textit{In particular, the linear correspondence $ l^2_{h,0}\ni \fv\to \fv_x\in l^2_{h,0}$ is well defined, but not onto, since $\delta_x$ has a non-trivial kernel.}
          \end{rem}

          The biharmonic discrete operator is given by (for $\fv,\,\fv_x\in l^2_{h,0}),$
    \be \label{d4} \delta_x^4\fv=\frac{12}{h^2}[\delta_x\fv_x-\delta_x^2\fv].
    \ee

    We next introduce a fourth-order replacement to the operator $\delta^2_x$ (see  ~\cite[Equation (10.50)(c)]{book}),
    \be\label{eqdefdelta2tild} (\widetilde{\delta}_x^2\fv)_j=2(\delta_x^2\fv)_j-(\delta_x\fv_x)_j,\quad 1\leq j\leq N-1.\ee

      Note that, in accordance with Remark ~\ref{remzerodiscderiv} the operator $\widetilde{\delta}_x^2$ is defined on grid functions $\fv\in l^2_{h,0},$ so that also $\fv_x\in l^2_{h,0}.$

    The connection between the two difference operators for the second-order derivative is given by
     \be\label{twosecondorder}-\widetilde{\delta_x}^2=-\delta_x^2 +\frac{h^2}{12} \delta_x^4.\ee

   \begin{rem}\label{remhdependence}
     Clearly the operators  $\delta_x,\, \delta_x^2,\, \delta_x^4$
     depend on $h,$ but for notational simplicity this dependence
     is not explicitly indicated.
     \end{rem}

     The fact that the biharmonic discrete operator $\delta_x^4$ is positive (in particular symmetric) is proved in ~\cite[Lemmas 10.9,\,10.10]{book}. Therefore its inverse $\Big(\delta^4_x\Big)^{-1}$ is also positive. In fact, it satisfies a strong coercivity property, that is also established in the aforementioned reference.

    An interpretation  to the finite-difference operators $~\widetilde{\delta_x}^2$ and $\delta_x^4$ is provided by the ``polynomial approach''~\cite[Section 10.3]{book}, as follows.

     Let $q(x)$ be a fourth-order polynomial such that
     $$q(x_{j})=\fv_j,\,\,q(x_{j\pm 1})=\fv_{j\pm 1},\,\,q'(x_{j\pm 1})=(\fv_x)_{j\pm 1}.$$
     Then
     \be  (\widetilde{\delta_x}^2\fv)_j=q''(x_j),\,\,(\delta_x^4\fv)_j=q^{(4)}(x_j).
     \ee
      The discrete biharmonic operator gives a very accurate approximation to the continuous one (``optimal 4-th order accuracy'') , as seen in the following claim ~\cite[Theorem 10.19]{book} .
          \begin{claim}\label{claim21}
      Let $f(x)\in C^4(\Omega),\,\,\Omega=[0,1].$ Let $u(x)$ satisfy
        \be\label{eqfourthorder}\Big(\frac{d}{dx}\Big)^4u(x)=f(x),\ee
        subject to homogeneous boundary conditions
        \be\label{eqbdrydata}u(0)=\parx u(0)=u(1)=\parx u(1)=0.\ee
          Then
        \be\label{eqoptimalerror}
           |u^\ast-(\delta^4_x)^{-1}f^\ast|_\infty=O(h^4).
        \ee
      \end{claim}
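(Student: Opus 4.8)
The plan is to use the classical paradigm \emph{consistency $+$ stability $\Rightarrow$ convergence}. Write $\fw = u^\ast-(\delta_x^4)^{-1}f^\ast\in l^2_{h,0}$ for the error grid function. Since $\delta_x^4\big[(\delta_x^4)^{-1}f^\ast\big]=f^\ast$ (the operator is invertible, being positive definite), applying the discrete operator to $\fw$ gives
$$\delta_x^4\fw=\delta_x^4 u^\ast-f^\ast=:\tau,$$
the \emph{truncation error}. Thus $\fw=(\delta_x^4)^{-1}\tau$, and the proof reduces to two independent estimates: (i) a consistency bound $|\tau|_\infty=O(h^4)$, and (ii) a stability bound controlling $(\delta_x^4)^{-1}$ in the sup-norm. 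Note that $\fw$ and its Hermitian derivative $\fw_x$ both lie in $l^2_{h,0}$, so $\delta_x^4\fw$ is well defined in the sense of Remark ~\ref{remzerodiscderiv}; here we use that $u(0)=u(1)=0$ and $u'(0)=u'(1)=0$ make the grid values of $u$ and the boundary convention on $(u^\ast)_x$ mutually consistent.

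For consistency I would first note that $u^{(4)}=f\in C^4(\Omega)$ forces $u\in C^8(\Omega)$, so all the Taylor expansions below are legitimate. The heart of the matter is to expand the bracket in ~\eqref{d4}. Using $\sigma_x=I+\frac{h^2}{6}\delta_x^2$ together with the defining relation ~\eqref{eqdefhermit}, one shows that the Hermitian derivative of $u^\ast$ satisfies
$$(u^\ast)_x=(u')^\ast-\frac{h^4}{180}\,(u^{(5)})^\ast+O(h^6).$$
Feeding this into $\delta_x(u^\ast)_x$ and comparing with $\delta_x^2 u^\ast$, the leading $O(h^2)$ terms combine to exactly $\frac{h^2}{12}(u^{(4)})^\ast$, while the $O(h^4)$ contributions (both proportional to $(u^{(6)})^\ast$) cancel:
$$\delta_x(u^\ast)_x-\delta_x^2 u^\ast=\frac{h^2}{12}\,(u^{(4)})^\ast+O(h^6).$$
Multiplying by $\frac{12}{h^2}$ yields $\delta_x^4 u^\ast=(u^{(4)})^\ast+O(h^4)=f^\ast+O(h^4)$, i.e. $|\tau|_\infty=O(h^4)$. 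This cancellation of the $h^4$ terms is the algebraic mechanism behind the ``optimal'' order, and carrying it out carefully — including a separate check of the two rows adjacent to the boundary, where the convention $(\fv_x)_0=(\fv_x)_N=0$ interacts with the expansion — is the step I expect to require the most care.

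For stability I would invoke the coercivity of $\delta_x^4$ recorded after ~\eqref{twosecondorder} (from ~\cite[Lemmas 10.9, 10.10]{book}): there is $c>0$, independent of $h$, with $(\delta_x^4\fv,\fv)_h\ge c\,\|\fv\|^2$ for a discrete $H^2_0$ seminorm $\|\cdot\|$ that dominates $|\cdot|_h$ through a discrete Poincar\'e inequality. Testing $\delta_x^4\fw=\tau$ against $\fw$ and applying the Cauchy--Schwarz inequality gives
$$c\,\|\fw\|^2\le(\tau,\fw)_h\le|\tau|_h\,|\fw|_h\le C|\tau|_h\,\|\fw\|,$$
hence $\|\fw\|\le C|\tau|_h$. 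Finally the one-dimensional discrete Sobolev embedding $|\fv|_\infty\le C\|\fv\|$ (valid for $\fv\in l^2_{h,0}$, reflecting $H^1_0\hookrightarrow C^0$ in one dimension) upgrades the $l^2_h$ estimate to the sup-norm, $|\fw|_\infty\le C\|\fw\|\le C|\tau|_h$.

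Combining the two estimates and using $|\tau|_h\le\sqrt2\,|\tau|_\infty$ on $[0,1]$ (since $h(N+1)\le 2$), we obtain
$$|u^\ast-(\delta_x^4)^{-1}f^\ast|_\infty=|\fw|_\infty\le C|\tau|_\infty=O(h^4),$$
which is the assertion. The two delicate points are therefore the fourth-order consistency, where the cancellation of the $h^4$ terms is essential and the boundary rows must be verified, and the passage from the $l^2_h$ coercivity bound to the sup-norm via the discrete Sobolev inequality. An alternative route to the latter, available once the explicit Green's kernel of $(\delta_x^4)^{-1}$ is constructed in Section ~\ref{seckernel}, is to bound $|(\delta_x^4)^{-1}\tau|_\infty$ directly by the uniform boundedness of that kernel.
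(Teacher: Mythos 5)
Your skeleton (consistency $+$ stability) is the natural first attempt, and your interior computation is correct: the formal symbol expansion does give $\delta_x(u^\ast)_x-\delta_x^2u^\ast=\frac{h^2}{12}(u^{(4)})^\ast+O(h^6)$ away from the boundary, with the $h^4$ terms cancelling. But the proposal has a genuine gap exactly at the point you flagged as ``requiring care'': the uniform bound $|\tau|_\infty=O(h^4)$ is \emph{false} at the rows $j=1$ and $j=N-1$. The reason is that the expansion $(u^\ast)_x=(u')^\ast-\frac{h^4}{180}(u^{(5)})^\ast+O(h^6)$ is not valid as a \emph{smooth} grid function near the endpoints. Writing $e=(u^\ast)_x-(u')^\ast$, one has $\sigma_x e=-\frac{h^4}{180}(u^{(5)})^\ast+O(h^6)$ with $e_0=e_N=0$, and inverting $\sigma_x$ produces, besides the smooth $O(h^4)$ part, boundary-layer terms of amplitude $O(h^4)$ decaying geometrically like $\rho^j$, $\rho=2-\sqrt3$ (the characteristic root of the Simpson stencil). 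A boundary layer is annihilated by neither differencing nor smoothness: $\delta_x e=O(h^3)$ at $j=1$, and after multiplication by $12/h^2$ in \eqref{d4} the truncation error at the near-boundary rows is only $O(h)$. This is a known feature of this scheme (see \cite{abarbanel} and \cite[Ch.~10]{book}): $|\tau|_\infty=O(h)$, $|\tau|_h=O(h^{3/2})$, so your coercivity-plus-discrete-Sobolev stability argument, even granting all its ingredients, delivers only $O(h^{3/2})$ convergence — which is precisely why the $O(h^4)$ result is called ``optimal'' and is nontrivial.

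The repair requires using the fine structure of the inverse operator rather than a norm bound on it. The paper does not reprove the claim — it cites \cite[Theorem 10.19]{book}, whose proof (as the appendix here explains) rests on the explicit matrix representation of $(\delta_x^4)^{-1}$; in the language of this paper, Corollary ~\ref{cordiscresolvent} gives $K^h_{i,j}=hK(x_i,x_j)$ with $K$ as in \eqref{eqkernelcont}, and the decisive property is not the uniform boundedness of $K$ (your closing suggestion — that alone only yields $O(h)$ from the bad rows) but its \emph{quadratic vanishing} at the boundary, $K(x,y)=O\big(\min(y,1-y)^2\big)$, inherited from the clamped conditions. Splitting $\tau$ into the smooth interior part and the boundary-layer part, the bad rows contribute to $\fw_i=\sum_j K^h_{i,j}\tau_j$ at most
\begin{equation*}
h\sum_{j\geq 1} K(x_i,x_j)\,Ch\rho^{\,j}\;\leq\; Ch^2\sum_{j\geq 1}(jh)^2\rho^{\,j}\;=\;O(h^4),
\end{equation*}
while the smooth part contributes $O(h^4)$ directly; this recovers \eqref{eqoptimalerror}. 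So: right framework, correct interior consistency, but the near-boundary truncation loss and the compensating boundary decay of the discrete Green's function are the actual content of the theorem, and both are missing from the proposal.
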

      \begin{rem} The ``$O(h^4)$'' here means that there exists a constant $C>0,$ depending only on $f,$ such that for all integers $N>1,$
       $$|u^\ast-(\delta^4_x)^{-1}f^\ast|_\infty\leq Ch^4,\quad h=\frac{1}{N}.$$
       Observe that the grid functions in this estimate are defined on the grid of (the variable) mesh size $h.$
      \end{rem}

\section{\textbf{SPLINES , HERMITIAN DERIVATIVES and the DISCRETE BIHARMONIC OPERATOR}}\label{secsplinehermit}

We use the notation of the previous section.

Let $\fu\in l^2_{h,0}$ be a grid function vanishing at the endpoints and let $s_\fu\in H^2_0(\Omega)$ be the corresponding spline function.

   We use interchangeably the notation $\fu_j=\fu(x_j).$

    Let $\fu_x$ be the Hermitian derivative of $\fu,$ and we set at the endpoints
    \be\label{eqbdrycond}\fu_x(x_0)=s_\fu'(x_0)=0, \fu_x(x_N)=s_\fu'(x_N)=0.\ee
\begin{prop}\label{fuxsprime}
   For all interior nodes, $s_\fu'(x_j)=\fu_x(x_j),\quad 1\leq j\leq N-1.$
\end{prop}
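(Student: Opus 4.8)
The plan is to exhibit the spline slopes $\{s_\fu'(x_j)\}_{j=0}^N$ and the Hermitian derivatives $\{(\fu_x)_j\}_{j=0}^N$ as solutions of one and the same linear system, and then to invoke its unique solvability. Write $m_j := s_\fu'(x_j)$. By Claim~\ref{sf}, on each subinterval $[x_j,x_{j+1}]$ the spline $s_\fu$ is the unique Hermite cubic determined by the four data $s_\fu(x_j)=\fu_j,\ s_\fu(x_{j+1})=\fu_{j+1},\ s_\fu'(x_j)=m_j,\ s_\fu'(x_{j+1})=m_{j+1}$. I would write $s_\fu$ in the Hermite basis on this interval, differentiate twice, and evaluate at the two endpoints, obtaining explicit expressions for the one-sided second derivatives
\[
s_\fu''(x_j^{+})=\frac{6}{h^2}(\fu_{j+1}-\fu_j)-\frac{1}{h}(4m_j+2m_{j+1}),\qquad
s_\fu''(x_{j+1}^{-})=-\frac{6}{h^2}(\fu_{j+1}-\fu_j)+\frac{1}{h}(2m_j+4m_{j+1}).
\]

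The decisive step is to impose the $C^2$ regularity of $s_\fu$ (again Claim~\ref{sf}): at each interior node $x_j$ the two one-sided values must agree, $s_\fu''(x_j^{-})=s_\fu''(x_j^{+})$, where the left value is read off the interval $[x_{j-1},x_j]$ and the right value off $[x_j,x_{j+1}]$. Substituting the formulas above, clearing the factor $1/h$, and collecting the slope terms on one side produces the classical tridiagonal ``moment'' relation
\[
m_{j-1}+4m_j+m_{j+1}=\frac{3}{h}(\fu_{j+1}-\fu_{j-1}),\qquad 1\le j\le N-1,
\]
while the boundary conditions $s_\fu'(x_0)=s_\fu'(x_N)=0$ give $m_0=m_N=0$.

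Next I would rewrite the defining relation of the Hermitian derivative. Expanding $(\sigma_x\fu_x)_j=(\delta_x\fu)_j$ by means of \eqref{eqdefsigmasimpson} and \eqref{eqdefinedelta2}, and multiplying through by $6$, yields
\[
(\fu_x)_{j-1}+4(\fu_x)_j+(\fu_x)_{j+1}=\frac{3}{h}(\fu_{j+1}-\fu_{j-1}),\qquad 1\le j\le N-1,
\]
together with $(\fu_x)_0=(\fu_x)_N=0$ from \eqref{eqbdrycond}. Thus $\{m_j\}$ and $\{(\fu_x)_j\}$ solve the \emph{same} system. Finally I would note that its coefficient matrix is the $(N-1)\times(N-1)$ tridiagonal matrix with diagonal entry $4$ and off-diagonal entries $1$; it is strictly diagonally dominant, hence invertible (equivalently, $\sigma_x$ is positive definite on $l^2_{h,0}$). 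Uniqueness of the solution forces $m_j=(\fu_x)_j$ for every $1\le j\le N-1$, which is the assertion.

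I expect the $C^2$-matching to be the only genuine obstacle, since it requires carefully tracking and summing the Hermite-basis coefficients on two adjacent subintervals; once that identity is in hand, the comparison with the Hermitian-derivative equation and the invertibility of $\sigma_x$ are immediate.
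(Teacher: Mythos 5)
Your proof is correct, and it arrives at the same pivotal identity as the paper --- that the spline slopes $m_j=s_\fu'(x_j)$, with $m_0=m_N=0$, satisfy the Hermitian-derivative equations $(\sigma_x m)_j=(\delta_x\fu)_j$, equivalently $m_{j-1}+4m_j+m_{j+1}=\frac{3}{h}(\fu_{j+1}-\fu_{j-1})$ --- but by a genuinely different computation. The paper verifies this relation locally around each node: since $s_\fu\in C^2$, its polynomial part of degree $\leq 2$ is the same on both sides of the node, and for that part the identity is exactly Simpson's rule applied to $s_\fu'$ (exact on polynomials of degree $\leq 3$); what remains are the odd cubic pieces $a^\pm x^3$, which are checked by a one-line direct computation. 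Your route instead derives the tridiagonal slope system in the classical way: write $s_\fu$ in the Hermite basis on each subinterval, compute the one-sided second derivatives (your two formulas are correct --- I verified the coefficients $\frac{6}{h^2}(\fu_{j+1}-\fu_j)-\frac{1}{h}(4m_j+2m_{j+1})$ and $-\frac{6}{h^2}(\fu_{j+1}-\fu_j)+\frac{1}{h}(2m_j+4m_{j+1})$ against the Hermite basis functions), and impose $C^2$ matching at interior nodes. These are two faces of the same fact, Simpson exactness on cubics versus second-derivative continuity of Hermite cubics; the paper's version is shorter, while yours recovers the standard spline slope equations directly from the data and would be the natural presentation for a reader coming from spline textbooks. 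You also make explicit a step the paper leaves tacit: the unique solvability of $\sigma_x\fv=\delta_x\fu$ on $l^2_{h,0}$, via strict diagonal dominance of the tridiagonal matrix with diagonal $4$ and off-diagonal $1$ --- the paper relies on this only implicitly, through Remark~\ref{remzerodiscderiv}, which asserts that the correspondence $\fu\to\fu_x$ is well defined. No gaps.
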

  \begin{proof}
  To simplify notation we shift $x_j=0,$ so we need to show
     \be
     \frac13 s_\fu'(-h)+\frac43 s_\fu'(0)+ \frac13 s_\fu'(h)=\frac{\fu(h)-\fu(-h)}{h}.
     \ee
      The quadratic part of $s_\fu$ is continuous, so the equality for this part follows from Simpson's rule.

      Thus we need only check for $s_\fu(x)=a^\pm x^3 $ for $\pm x>0.$ But this can be verified directly.
  \end{proof}

   %\section{\textbf{SPLINES and the DISCRETE BIHARMONIC OPERATOR}}\label{secsplinediscbiharm}
  %   Again, let $\fu\in l^2_{h,0}$ be a grid function vanishing at the endpoints and let $s_\fu\in H^2_0(\Omega)$ be the corresponding spline function.

   % Let $\fu_x$ be the Hermitian derivative of $\fu,$ and we set at the endpoints $\fu_x(x_0)=s_\fu'(x_0)=0, \fu_x(x_N)=s_\fu'(x_N)=0.$

    In addition to $\fu\in l^2_{h,0},$ let $\fv\in l^2_{h,0}$ be a grid function vanishing at the endpoints and let $s_\fv$ be the corresponding spline function. At the endpoints we impose again the boundary conditions ~\eqref{eqbdrycond}.
    \begin{claim} The map  $(\fu,\fv)\to\int\limits_0^1s_\fu''(x)s_\fv''(x) dx$ is a scalar product on $l^2_{h,0}.$
    \end{claim}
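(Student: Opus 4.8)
The plan is to verify the three defining properties of a scalar product on the finite-dimensional space $l^2_{h,0}$: bilinearity, symmetry, and positive-definiteness. Symmetry is immediate since the integrand $s_\fu''(x)s_\fv''(x)$ is symmetric in $\fu$ and $\fv$. Bilinearity follows at once from the linearity of the map $f\hookrightarrow s_f$ established in Claim \ref{linear}: since $s_{\fu_1+\fu_2}=s_{\fu_1}+s_{\fu_2}$ and $s_{c\fu}=c\,s_\fu$, the second derivatives depend linearly on the grid function, and the integral is linear in each argument. This part is routine.

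The substantive point is positive-definiteness, and in particular the nondegeneracy (definiteness) rather than mere nonnegativity. Nonnegativity is clear, since setting $\fv=\fu$ gives $\int_0^1|s_\fu''(x)|^2\,dx=I(s_\fu)\geq 0$. What must be shown is that this vanishes only when $\fu=0$. First I would note that $\int_0^1|s_\fu''(x)|^2\,dx=0$ forces $s_\fu''\equiv 0$ on $[0,1]$, so $s_\fu$ is an affine function. The boundary conditions $s_\fu(x_0)=s_\fu'(x_0)=0$ from the definition of the spline (equivalently \eqref{eqbdrycond}) then force $s_\fu\equiv 0$, and hence $\fu_j=s_\fu(x_j)=0$ for all $j$, i.e.\ $\fu=0$.

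The step I expect to require the most care is the passage from ``$\int_0^1|s_\fu''|^2=0$ implies $\fu=0$'' being genuinely an argument about the interpolation map rather than about the function $s_\fu$ alone; it is essential here that $s_\fu$ is the \emph{spline} attached to $\fu$ and that the correspondence $\fu\mapsto s_\fu$ is injective (Claim \ref{linear}). The cleanest way to organize this is to observe that $\int_0^1|s_\fu''|^2\,dx=I(s_\fu)$ is exactly the energy functional from the earlier section, and $s_\fu$ minimizes it over $\Acal$; the vanishing of the energy means $s_\fu$ is affine, and since an affine function satisfying the clamped endpoint conditions is identically zero, injectivity of the interpolation map yields $\fu=0$. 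I would present the argument in this order: symmetry, bilinearity via Claim \ref{linear}, nonnegativity, then the definiteness argument reducing $s_\fu''\equiv 0$ plus boundary conditions to $\fu=0$.
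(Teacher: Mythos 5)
Your proposal is correct and follows essentially the same route as the paper: bilinearity via the linearity of $f\hookrightarrow s_f$ (Claim~\ref{linear}), and definiteness by observing that $\int_0^1|s_\fu''|^2\,dx=0$ forces $s_\fu''\equiv 0$, whence the $H^2_0$ boundary conditions give $s_\fu\equiv 0$ and hence $\fu_j=s_\fu(x_j)=0$ for all $j$. Your invocation of injectivity of $\fu\mapsto s_\fu$ at the end is superfluous (the grid values of $s_\fu$ recover $\fu$ directly, as you in fact use), but this is a stylistic redundancy, not a gap.
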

    \begin{proof}
    In view of Claim ~\ref{linear} the map is bilinear. Furthermore , if $\int\limits_0^1|s_\fu''(x)|^2 dx=0,$ then $s_\fu''\equiv 0$ and since $s_\fu\in H^2_0$ it follows that also $s_\fu\equiv 0,$ which implies $\fu=0.$
    \end{proof}

       We denote by $\delta^4_x\fu$ the Stephenson fourth-order derivative of $\fu.$ It is interesting that the scalar product of the previous claim can be expressed in terms of this fourth-order derivative.
       \begin{prop} Let $\fu,\fu_x,\fv,\fv_x\in l^2_{h,0}.$ \newline
                 The discrete scalar product of $\delta^4_x\fu$ and $\fv$ satisfies
          \be\label{delta4}
          (\delta^4_x\fu,\fv)_h=\int\limits_0^1s_\fu''(x)s_\fv''(x) dx.
          \ee
       \end{prop}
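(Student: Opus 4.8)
The plan is to transform the right-hand side of \eqref{delta4} by integration by parts until only the grid values $\fv_j$ survive, and then to identify the resulting coefficients with $\delta_x^4\fu$. Throughout I write $[g]_k=g(x_k^+)-g(x_k^-)$ for the jump of a piecewise-continuous function $g$ at an interior node $x_k$.

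First I would integrate by parts once. Since $s_\fu,s_\fv\in H^2_0(\Omega)$, both $s_\fu''$ and $s_\fv'$ are continuous and $s_\fv'$ vanishes at the endpoints, so
\[
\int_0^1 s_\fu''(x)s_\fv''(x)\,dx=\bigl[s_\fu''\,s_\fv'\bigr]_0^1-\int_0^1 s_\fu'''(x)\,s_\fv'(x)\,dx=-\int_0^1 s_\fu'''(x)\,s_\fv'(x)\,dx .
\]
Because $s_\fu$ is a cubic polynomial on each $[x_j,x_{j+1}]$, the function $s_\fu'''$ is piecewise constant and $s_\fu^{(4)}\equiv 0$ in the interior of each subinterval; equivalently, the distributional fourth derivative of $s_\fu$ is a sum of point masses of weight $[s_\fu''']_k$ at the interior nodes. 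Integrating by parts again on each subinterval and summing, the interior terms drop out, and collecting the coefficient of each $s_\fv(x_k)=\fv_k$ (the endpoint contributions vanish since $\fv_0=\fv_N=0$) yields
\[
\int_0^1 s_\fu''(x)s_\fv''(x)\,dx=\sum_{k=1}^{N-1}[s_\fu''']_k\,\fv_k .
\]

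The second step is to evaluate the jump $[s_\fu''']_k$. On each subinterval $s_\fu$ is the Hermite cubic interpolant determined by the nodal values $\fu_j$ and the slopes $s_\fu'(x_j)$; by Proposition~\ref{fuxsprime} the interior slopes equal the Hermitian derivatives $(\fu_x)_j$, and the endpoint conditions \eqref{eqbdrycond} supply the remaining slopes. A direct computation of the constant third derivative of a Hermite cubic on an interval of length $h$, applied to the two subintervals flanking $x_k$ and subtracted, gives
\[
[s_\fu''']_k=\frac{12}{h^3}\bigl(2\fu_k-\fu_{k+1}-\fu_{k-1}\bigr)+\frac{6}{h^2}\bigl((\fu_x)_{k+1}-(\fu_x)_{k-1}\bigr)=\frac{12}{h}\bigl[(\delta_x\fu_x)_k-(\delta_x^2\fu)_k\bigr],
\]
where I have rewritten the differences through the operators in \eqref{eqdefinedelta2}. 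By the definition \eqref{d4} of the discrete biharmonic operator this is exactly $h\,(\delta_x^4\fu)_k$. Combining the two steps and recalling $\fv_0=\fv_N=0$,
\[
\int_0^1 s_\fu''(x)s_\fv''(x)\,dx=\sum_{k=1}^{N-1}h\,(\delta_x^4\fu)_k\,\fv_k=h\sum_{k=0}^{N}(\delta_x^4\fu)_k\,\fv_k=(\delta^4_x\fu,\fv)_h,
\]
which is \eqref{delta4}.

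I expect the main obstacle to be the bookkeeping in the repeated integration by parts across the jump discontinuities of $s_\fu'''$: one must treat $s_\fu^{(4)}$ distributionally and keep the signs and indices straight when collecting the coefficient of each $\fv_k$. The Hermite-cubic third-derivative computation is otherwise routine, but its crucial input is Proposition~\ref{fuxsprime}: without the identity $s_\fu'(x_j)=(\fu_x)_j$ the slopes entering the jump would not be the Hermitian derivatives, and the factor $(\delta_x\fu_x)_k$ in \eqref{d4} could not appear.
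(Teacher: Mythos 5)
Your proof is correct, and it shares the paper's overall skeleton---two integrations by parts reduce the bilinear form to $\sum_{k=1}^{N-1}\bigl(s_\fu'''(x_k^+)-s_\fu'''(x_k^-)\bigr)\fv_k$, and Proposition~\ref{fuxsprime} is the essential input in both arguments---but your identification of the jump with $h(\delta_x^4\fu)_k$ takes a genuinely different route. The paper introduces the quartic Stephenson interpolant $Q_j$, writes $Q_j-s_\fu=A_j(x-x_j)^2(x-x_{j+1})^2$ on each of the two intervals flanking $x_j$, and reads the jump off this factorization, relying on the polynomial characterization $(\delta_x^4\fu)_j=Q_j^{(4)}(x_j)$ quoted from the earlier literature. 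You instead compute the constant third derivative of the Hermite cubic on each subinterval explicitly (your formula is consistent with the coefficient $a=\frac{1}{h^3}[h(s'(h)+s'(0))-2(s(h)-s(0))]$ that the paper itself derives later, in the subsection evaluating $\int_0^1|s_\fu''|^2dx$) and match the resulting nodal difference directly to the algebraic definition~\eqref{d4}, $\delta_x^4\fu=\frac{12}{h^2}[\delta_x\fu_x-\delta_x^2\fu]$; I have checked the arithmetic and the jump formula $[s_\fu''']_k=\frac{12}{h}[(\delta_x\fu_x)_k-(\delta_x^2\fu)_k]$ is right. Your version is thus more self-contained---it does not use the equivalence between~\eqref{d4} and the quartic-interpolant definition of $\delta_x^4$---and it establishes the jump identity of Proposition~\ref{cor3rdspljump} en route rather than extracting it afterwards; what the paper's factorization buys in exchange is the constants $A_j$, which it reuses to obtain the second-derivative identity~\eqref{eqsfu2ndderiv}. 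One cosmetic remark: in your final display $(\delta_x^4\fu)_k$ is defined only for $1\leq k\leq N-1$, so extending the sum to $k=0,N$ is a convention, harmless because $\fv_0=\fv_N=0$.
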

       \begin{proof}
           Pick $j\in\set{1,2,...,N-1}$ and let $Q_j(x)$ be the fourth-order polynomial used in the construction of
            $(\delta^4_x\fu)_j,$ namely,
            $$\aligned Q_j(x_j)=\fu_j=s_\fu(x_j),\quad Q_j(x_{j\pm 1})=\fu_{j\pm 1}=s_\fu(x_{j\pm 1}),\\
            Q'_j(x_j)=s'_\fu(x_j),\quad Q'_j(x_{j\pm 1})=s'_\fu(x_{j\pm 1}).\endaligned$$
            Observe that the second line above follows from Proposition \ref{fuxsprime}.

            Consider the polynomial $Q_j-s_\fu$ in the interval $[x_j,x_{j+1}].$ It is a fourth-order polynomial with double zeros at $x_j, x_{j+1},$ so it must have the form
              \be\label{Qs1}Q_j(x)-s_\fu(x)=A_j(x-x_j)^2(x-x_{j+1})^2,\quad x\in [x_j,x_{j+1}],\ee
              and similarly
               \be\label{Qs2}Q_j(x)-s_\fu(x)=A_{j-1}(x-x_j)^2(x-x_{j-1})^2,\quad x\in [x_{j-1},x_j].\ee
                However,
                \be\label{eqAjeqsteph} A_{j-1}=A_j=\frac{1}{24}Q_j^{(4)}(x_j)=\frac{1}{24}\delta^4_x\fu_j,\ee
                by definition of the discrete biharmonic operator.

                Let us now compute
                $$\aligned\int\limits_{x_j}^{x_{j+1}}s_\fu''(x)s_\fv''(x) dx=s_\fu''(x_{j+1})s_\fv'(x_{j+1})-
                s_\fu''(x_{j})s_\fv'(x_{j})-\int\limits_{x_j}^{x_{j+1}}s_\fu'''(x)s_\fv'(x) dx\\=
                s_\fu''(x_{j+1})s_\fv'(x_{j+1})-
                s_\fu''(x_{j})s_\fv'(x_{j})-s_\fu'''(x_{j+1}^-)s_\fv(x_{j+1})+
                s_\fu'''(x_{j}^+)s_\fv(x_{j}),\endaligned$$
                since the fourth-order derivative of $s_\fu$ vanishes identically in the interval.

                By summation, and recalling that $s_\fu\in C^2,$ we get
                \be\label{fufv}
                \int\limits_0^1s_\fu''(x)s_\fv''(x) dx=\sum\limits_{j=0}^{N-1}(s_\fu'''(x_{j}^+)-s_\fu'''(x_{j}^-))s_\fv(x_{j}).
                \ee
                  From Equations ~\eqref{Qs1},~\eqref{Qs2} we get
                  \be\label{eq3rdderiv}\aligned Q_j'''(x_j)-s_\fu'''(x_j^+)=-12hA_j,\\
                   Q_j'''(x_j)-s_\fu'''(x_j^-)=12hA_j,\endaligned\ee
                   and inserting this in Equation ~\eqref{fufv} yields
                   \be\int\limits_0^1s_\fu''(x)s_\fv''(x) dx=24h\sum\limits_{j=0}^{N-1}A_js_\fv(x_{j})=
                   h\sum\limits_{j=0}^{N-1}(\delta^4_x\fu)_j\fv_j.\ee

       \end{proof}
           \begin{prop}\label{cor3rdspljump}  The jump of the third order derivatives of the cubic splines at the nodes is given by
           \be\label{eqjumpsteph}s_\fu'''(x_j^+)-s_\fu'''(x_j^-)=h(\delta^4_x\fu)_j.\ee
           \end{prop}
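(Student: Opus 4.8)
The plan is to extract this identity directly from the computations already carried out in the proof of the preceding proposition, where the jump of the third derivative appeared as an intermediate quantity. The key observation is that the auxiliary interpolating polynomial $Q_j$ is smooth, so its third derivative $Q_j'''(x_j)$ is genuinely single-valued at $x_j$; subtracting the two one-sided relations in \eqref{eq3rdderiv} will cancel this common term and leave precisely the jump $s_\fu'''(x_j^+)-s_\fu'''(x_j^-)$ on the left-hand side.

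Concretely, I would begin from the two local representations \eqref{Qs1} and \eqref{Qs2} of $Q_j-s_\fu$ on the intervals $[x_j,x_{j+1}]$ and $[x_{j-1},x_j]$. Differentiating the quartic bump $(x-x_j)^2(x-x_{j\pm1})^2$ three times and evaluating at $x=x_j$ produces $\mp 12h$, the sign depending on which neighbour enters the factorization. This is exactly the content of \eqref{eq3rdderiv}, namely $Q_j'''(x_j)-s_\fu'''(x_j^+)=-12hA_j$ and $Q_j'''(x_j)-s_\fu'''(x_j^-)=12hA_j$, together with the equality $A_{j-1}=A_j$ recorded in \eqref{eqAjeqsteph}.

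Subtracting these two relations eliminates $Q_j'''(x_j)$ and yields
\[
s_\fu'''(x_j^+)-s_\fu'''(x_j^-)=24hA_j.
\]
Finally, invoking \eqref{eqAjeqsteph} in the form $24A_j=(\delta^4_x\fu)_j$ converts the right-hand side into $h(\delta^4_x\fu)_j$, which is the claimed identity. There is no genuine obstacle here: the whole argument is a two-line consequence of the relations established in the previous proof, and the only point worth double-checking is the elementary third-derivative evaluation of the degree-four bump function, since it simultaneously fixes the factor $12h$ and the crucial sign difference between the two one-sided limits that makes the jump nonzero.
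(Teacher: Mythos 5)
Your proof is correct and is exactly the paper's own argument: the paper proves Proposition~\ref{cor3rdspljump} by the one-line instruction ``Combine Equations~\eqref{eq3rdderiv} and~\eqref{eqAjeqsteph},'' which is precisely your subtraction eliminating $Q_j'''(x_j)$ followed by the substitution $A_j=\frac{1}{24}(\delta^4_x\fu)_j$. Your sign verification of the one-sided third derivatives $\mp 12hA_j$ is a sound check and consistent with the paper's stated relations.
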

           \begin{proof}
              Combine Equations ~\eqref{eq3rdderiv} and ~\eqref{eqAjeqsteph}.
           \end{proof}
           \begin{rem} In the  literature (e.g. ~\cite{spline-book, deboor0} one can find various expressions for the jump of the third order derivatives of the cubic spline. However Proposition ~\ref{cor3rdspljump} provides a new expression, that can be interpreted as a ``fourth-order derivative'' of the function at the node.
           \end{rem}
               We can also interpret the second derivative of $s_\fu$ in terms of the finite difference operators. Recall that this derivative is continuous at the nodes.
               \begin{cor}
                  The value of $s_\fu''(x_j)$ is given by
                  \be\label{eqsfu2ndderiv}
                  s_\fu''(x_j)=(\widetilde{\delta_x}^2\fu)_j-\frac{h^2}{12}(\delta^4_x\fu)_j.
                  \ee
               \end{cor}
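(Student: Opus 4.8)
The plan is to exploit the polynomial interpretation of the operator $\widetilde{\delta_x}^2$ together with the polynomial $Q_j$ and the representation \eqref{Qs1} already introduced in the proof of the preceding proposition. Recall from the polynomial approach of Section \ref{subsecdiffoperators} that $(\widetilde{\delta_x}^2\fu)_j=Q_j''(x_j)$, where $Q_j$ is exactly the interpolating fourth-order polynomial used there (it matches $s_\fu$ in value at $x_{j-1},x_j,x_{j+1}$ and in derivative at $x_{j\pm1}$, the derivative match at $x_j$ being supplied by Proposition \ref{fuxsprime}). Thus $s_\fu''(x_j)$ and $(\widetilde{\delta_x}^2\fu)_j$ differ precisely by the second derivative of $Q_j-s_\fu$ at $x_j$, and the entire proof reduces to evaluating that single difference.

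The core computation is to differentiate the identity \eqref{Qs1}, namely $Q_j(x)-s_\fu(x)=A_j(x-x_j)^2(x-x_{j+1})^2$, twice on $[x_j,x_{j+1}]$ and evaluate at $x_j$. Writing $g(x)=(x-x_j)^2(x-x_{j+1})^2$, one has $g(x_j)=g'(x_j)=0$, and since the factor $(x-x_j)^2$ contributes $2$ while $(x-x_{j+1})^2$ contributes $(x_j-x_{j+1})^2=h^2$, it follows that $g''(x_j)=2h^2$. Hence $Q_j''(x_j)-s_\fu''(x_j)=2h^2A_j$. Inserting $A_j=\frac{1}{24}(\delta^4_x\fu)_j$ from \eqref{eqAjeqsteph} gives $2h^2A_j=\frac{h^2}{12}(\delta^4_x\fu)_j$, and rearranging yields exactly \eqref{eqsfu2ndderiv}. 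I expect no genuine difficulty here; this is a short polynomial computation leaning entirely on machinery already in place.

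The only point requiring a little care, though a mild one, is the one-sidedness of \eqref{Qs1}: it holds only on $[x_j,x_{j+1}]$, so a priori $Q_j''(x_j)$ is the right-hand second derivative. I would note that $s_\fu\in C^2$ (Claim \ref{sf}), so $s_\fu''(x_j)$ is unambiguous, and that repeating the computation with the companion identity \eqref{Qs2} on $[x_{j-1},x_j]$ produces the same value $2h^2A_{j-1}$, with $A_{j-1}=A_j$ by \eqref{eqAjeqsteph}; this confirms that the two sides agree and the result is well posed. Finally, using \eqref{twosecondorder} the conclusion \eqref{eqsfu2ndderiv} can be recast in the equivalent form $s_\fu''(x_j)=(\delta_x^2\fu)_j-\frac{h^2}{6}(\delta^4_x\fu)_j$, which may be a convenient alternative way to state it.
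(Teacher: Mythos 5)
Your proof is correct and follows essentially the same route as the paper: differentiating the identity \eqref{Qs1} twice at $x_j$ to get $Q_j''(x_j)-s_\fu''(x_j)=2A_jh^2$, identifying $Q_j''(x_j)=(\widetilde{\delta_x}^2\fu)_j$ via the polynomial interpretation, and substituting $A_j=\frac{1}{24}(\delta^4_x\fu)_j$ from \eqref{eqAjeqsteph}. Your extra remarks on the one-sided nature of \eqref{Qs1} and the consistency check via \eqref{Qs2} with $A_{j-1}=A_j$ are sound refinements of the same argument, not a different approach.
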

               \begin{proof}
               From Equation ~\eqref{Qs1} we get
               $$Q_j''(x_j)-s_\fu''(x_j)=2A_jh^2.$$
               By definition, $Q_j''(x_j)=(\widetilde{\delta_x}^2\fu)_j$ and from
               ~\eqref{eqAjeqsteph} we have $ A_j=\frac{1}{24}Q_j^{(4)}(x_j)=\frac{1}{24}(\delta^4_x\fu)_j,$ hence
               $$s_\fu''(x_j)=Q_j''(x_j)-2A_jh^2=(\widetilde{\delta_x}^2\fu)_j-\frac{h^2}{12}(\delta^4_x\fu)_j.$$
               \end{proof}
               \begin{rem}
               Note that invoking the relation
               ~\eqref{twosecondorder}
                              we obtain from ~\eqref{eqsfu2ndderiv}
              $$ s_\fu''(x_j)=(\delta_x^2\fu)_j-\frac{h^2}{6}(\delta^4_x\fu)_j.$$

               \end{rem}

                \subsection{\textbf{EVALUATING the INTEGRAL $\int\limits_{0}^{1}|s_\fu''(x)|^2dx$}}

       We first compute over a grid interval
                   $$ B_j=\int\limits_{x_j}^{x_{j+1}}|s_\fu''(x)|^2dx,\quad j=0,1,...,N-1. $$
                   To simplify notation, we set $y=x-x_j,$ so that $s(y)=s_\fu(x)$ is a cubic polynomial in $y\in [0,h].$
                   Writing
                   $$s(y)=ay^3+by^2+cy+d,$$
                   we get readily
                   $$s(h)-s(0)=ah^3+bh^2+ch,$$
                   and
                   $$s'(h)+s'(0)=3ah^2+2bh+2c,$$
                   hence
                   \be
                   a=\frac{1}{h^3}[h(s'(h)+s'(0))-2(s(h)-s(0))].
                   \ee
                     Since $s'(y)$ is a quadratic polynomial, we have
                     $$r:=s''(h/2)=\frac{1}{h}(s'(h)-s'(0)),$$
                     and
                     $$s''(y)=r+6a(y-\frac{h}{2}),\quad y\in [0,h].$$
                     Turning now back to the variable $x,$ and taking into account the equalities
                     $$s_\fu(x_j)=\fu(x_j),\quad 0\leq j\leq N-1,$$
                     $$s_\fu'(x_j)=\fu_x(x_j),\quad 0\leq j\leq N-1.$$
                      we obtain

                    \be \label{Bj}\aligned
                     B_j=\int\limits_{x_j}^{x_{j+1}}|s_\fu''(x)|^2dx\hspace{300pt}\\
                     =\frac{1}{h}(\fu_x(x_{j+1})-\fu_x(x_j))^2+\frac{3}{h}[(\fu_x(x_{j+1})+\fu_x(x_j))-
                     2\frac{\fu(x_{j+1})-\fu(x_j)}{h}]^2,\quad j=0,1,...,N-1,
                    \endaligned\ee
                    and
                    \be\label{intfu2prime}
                    \int\limits_0^1|s_\fu''(x)|^2 dx=\sum\limits_{j=0}^{N-1}B_j.
                    \ee
                    \begin{rem}
                      Equation ~\eqref{delta4} can then be used to \textit{define} the discrete fourth-order derivative $\delta^4_x\fu$ when $\fu,\fu_x\in l^2_{h,0}.$ From equation ~\eqref{intfu2prime} we obtain an explicit expression for $\delta^4_x\fu_j,$ which is actually the Stephenson expression.
                    \end{rem}
                    \section{\textbf{POSITIVITY}}\label{secpositivity}
                       It is well known that there is (in general) no maximum principle for elliptic partial differential operators of order higher than two. For the biharmonic equation in multi-dimensional domains there exist versions of the principle that involve estimates of the gradient of the solution, see ~\cite{pipher} and references therein. Under Dirichlet boundary conditions (the only ones considered here) the  \textit{preservation of positivity property} means that $\Delta^2u\geq 0\Rightarrow u\geq 0.$ It is actually a \textit{property of the domain.}  The maximum principle implies preservation of positivity but of course not vice versa. In the multi-dimensional case (excluding the one-dimensional case) we refer to ~\cite{Grunau} and references therein.

                       In our one-dimensional case we have the following proposition.
                       \begin{prop}\label{propmaximumcont} Let
                       $$\Big(\parx\Big)^4u(x)=f(x),$$
                       where $u\in H^4(\Omega)\cap H^2_0(\Omega).$ Then the following comparison principle holds.

                       If $f(x)\geq 0,\,\,x\in\Omega,$ then also $u(x)\geq 0,\,\,x\in \Omega.$
                       \end{prop}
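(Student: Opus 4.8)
The plan is to sidestep any attempt at a maximum principle (which has no analogue at fourth order) and instead exploit the elementary fact that the \emph{second} derivative of $u$ is convex. First I would set $w=u''$. Since $u\in H^4(\Omega)\cap H^2_0(\Omega)$ and $\Big(\parx\Big)^4 u=f\geq 0$, we have $w\in H^2(\Omega)\hookrightarrow C^1([0,1])$ with $w''=f\geq 0$ in the distributional (hence a.e.) sense, so $w$ is convex on $[0,1]$. The whole proof will be a sign analysis of $u'$ driven by this convexity.

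Next I would encode the four boundary conditions as integral constraints. Using $u(0)=u'(0)=0$ and integrating twice gives
\be
u'(x)=\int_0^x w(t)\,dt=:W(x),\qquad u(x)=\int_0^x (x-t)\,w(t)\,dt,
\ee
while the remaining conditions become $W(1)=u'(1)=0$, i.e. $\int_0^1 w\,dt=0$, together with $u(1)=0$.

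The heart of the argument is that, $w$ being convex, its negativity set $\set{w<0}$ is a subinterval $(a,b)\subseteq[0,1]$ (a sublevel set of a convex function is convex), with $w\geq 0$ on $[0,a]\cup[b,1]$. Hence $W$ is nondecreasing on $[0,a]$, nonincreasing on $[a,b]$, and nondecreasing on $[b,1]$. Since $W(0)=0$ this forces $W\geq 0$ on $[0,a]$, in particular $W(a)\geq 0$; since $W(1)=0$ it forces $W\leq 0$ on $[b,1]$, in particular $W(b)\leq 0$. On $[a,b]$, $W$ descends monotonically from $W(a)\geq 0$ to $W(b)\leq 0$, so it changes sign exactly once, at some $c\in[a,b]$, giving $u'=W\geq 0$ on $[0,c]$ and $u'\leq 0$ on $[c,1]$. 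Now on $[0,c]$ one has $u(x)=\int_0^x u'\geq 0$ because $u(0)=0$, and on $[c,1]$ one has $u(x)=-\int_x^1 u'\geq 0$ because $u(1)=0$ and $u'\leq 0$ there; together these yield $u\geq 0$ on all of $[0,1]$.

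I expect the only delicate points to be bookkeeping rather than conceptual: confirming the regularity $w\in C^1$ that legitimizes pointwise sign statements about $W$, and checking that the degenerate configurations — an empty negativity set (which forces $w\equiv 0$, hence $u\equiv 0$) or a negativity interval abutting an endpoint (so that $c$ lands on the boundary) — are subsumed by the same monotonicity argument, both boundary conditions $u(0)=u(1)=0$ being used symmetrically. An alternative route, consistent with Section~\ref{seckernel}, is to write $u(x)=\int_0^1 G(x,\xi)f(\xi)\,d\xi$ with the Green's kernel of $\Big[\Big(\parx\Big)^4\Big]^{-1}$ and verify $G\geq 0$ from its explicit form; the convexity argument above, however, is self-contained and requires none of the kernel machinery.
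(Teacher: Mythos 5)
Your proof is correct, and it takes a genuinely different route from the paper's. The paper argues by contradiction: assuming an interior negative minimum $y$ of $u$, it uses Rolle's theorem (from $u'(0)=u'(y)=u'(1)=0$) to produce zeros of $u''$, takes the extremal zeros $a=\inf\set{\xi:\,u''(\xi)=0}$ and $b=\sup\set{\eta:\,u''(\eta)=0}$, and applies the strong maximum principle to $v=u''$ on $[a,b]$ (where $v''=f\geq 0$, $v(a)=v(b)=0$, $v(y)\geq 0$) to conclude $v\equiv 0$; a further Rolle argument pushes $a$ to $0$ and $b$ to $1$, yielding $u\equiv u(y)<0$ and contradicting $u(0)=0$. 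You instead argue directly: the convexity of $w=u''$ (from $w''=f\geq 0$ a.e., with $H^2\hookrightarrow C^1$ legitimizing the pointwise statements) makes the negativity set of $w$ a single subinterval, so $u'(x)=\int_0^x w(t)\,dt$ is nondecreasing, then nonincreasing, then nondecreasing, and, pinned by $u'(0)=u'(1)=0$, changes sign exactly once, from nonnegative to nonpositive; integrating from each endpoint then gives $u\geq 0$ with no contradiction scaffolding. Both proofs rest on the same structural fact --- $u''$ is convex/subharmonic when $f\geq 0$ --- but yours trades the strong maximum principle for the elementary monotonicity consequences of convexity, and as a bonus exhibits the unimodal profile of $u$ (nondecreasing up to some $c$, nonincreasing after), which the paper's argument does not make explicit. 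What the paper's formulation buys is a scheme that is reused almost verbatim in the discrete setting of Proposition~\ref{propmaximumdisc}, where $w=s_\fu''$ is merely continuous and piecewise linear and $w''$ is a nonnegative combination of Dirac masses; your argument would adapt there as well (such a $w$ is again convex), so nothing essential is lost. Your treatment of the degenerate cases (empty negativity set, forcing $u\equiv 0$; negativity interval abutting an endpoint, so that $c$ lands on the boundary) is sound as sketched, and your closing observation that positivity can alternatively be read off the explicit kernel ~\eqref{eqkernelcont} coincides with the paper's own remark following the proposition.
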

                       \begin{proof}
                         Suppose to the contrary that for some $y\in (0,1)$ we have $u(y)<0.$ We can assume that $y$ is a minimum point for $u,$ so that
                         $$u'(y)=0,\,\,u''(y)\geq 0.$$
                         Since $u'$ vanishes at the endpoints, we infer that there are points
                         $$\xi\in (0,y),\,\,\eta\in (y,1),$$
                         such that
                         $$u''(\xi)=u''(\eta)=0.$$
                         Let
                         \be\label{eqdefab}\aligned
                         a=\inf\Big\{\xi\in\Omega,\,\,u''(\xi)=0\Big\},\\
                         b=\sup \Big\{\eta\in\Omega,\,\,u''(\eta)=0\Big\}.
                         \endaligned\ee
                         Consider the function $v(x)=u''(x).$ It satisfies in the interval $[a,b]$ the inequality
                         $$v''(x)=f(x)\geq 0,$$
                         as well as $v(a)=v(b)=0$ and $v(y)\geq 0.$

                         The standard maximum principle now yields
                         $$v(x)\equiv 0,\,\,x\in [a,b],$$
                         hence also $u'(x)\equiv u'(y)= 0,\,x\in[a,b].$

                         If $a>0$ we get a contradiction since there is a point $\xi\in (0,a)$ with $u''(\xi)=0.$ Similarly if $b<1.$ We conclude that $u'(x)\equiv 0,\,x\in[0,1],$ hence $u(x)\equiv u(y)<0,\,x\in[0,1].$
                           However this contradicts the boundary condition $u(0)=u(1)=0.$

                       \end{proof}
                       \begin{rem} In Section ~\ref{seckernel} below we derive an expression for the resolvent kernel ~\eqref{eqkernelcont}. Since it is easy to see that the kernel is nonnegative, we obtain another proof of Proposition ~\ref{propmaximumcont}.
                       \end{rem}

                       \subsection{\textbf{POSITIVITY of the DISCRETE BIHARMONIC OPERATOR}}

                       We now show that the same positivity property holds also for the discrete biharmonic operator.
                       \begin{prop}\label{propmaximumdisc} Let
                       $$\delta^4_x\fu=\ff,$$
                       where $\fu,\,\fu_x\in l^2_{h,0}.$  Then the following comparison principle holds.

                       If $\ff_j\geq 0,\,\,0\leq j\leq N,$ then also $\fu_j\geq 0,\,\,0\leq j\leq N.$
                       \end{prop}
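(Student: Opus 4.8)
The plan is to reduce the discrete comparison principle to the continuous one through the cubic spline $s_\fu$, the bridge being Proposition~\ref{cor3rdspljump}. Since $s_\fu$ is a cubic polynomial on each grid interval, its fourth derivative vanishes in the interior of every interval, while across each interior node the third derivative jumps by exactly $s_\fu'''(x_j^+)-s_\fu'''(x_j^-)=h(\delta^4_x\fu)_j=h\ff_j$. Hence the distributional fourth derivative of $s_\fu$ is the measure
$$\Big(\parx\Big)^4 s_\fu=\sum_{j=1}^{N-1}h\ff_j\,\delta_{x_j},$$
and the hypothesis $\ff_j\geq 0$ is precisely what makes all its weights nonnegative. (The endpoints contribute nothing, as $s_\fu\in H^2_0(\Omega)$ and $\ff_0,\ff_N$ do not enter.)

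First I would record the analytic consequence for $v:=s_\fu''$. Because $s_\fu\in C^2_0(\Omega)$, the function $v$ is continuous and piecewise linear, and its slope $s_\fu'''$ is piecewise constant, jumping upward by $h\ff_j\geq 0$ at each interior node. A continuous, piecewise-linear function with nondecreasing slope is convex, so $v=s_\fu''$ is convex on $[0,1]$. This convexity is exactly the property that the maximum-principle step in the proof of Proposition~\ref{propmaximumcont} extracts from the pointwise inequality $v''=f\geq 0$; the point is that it continues to hold here even though the fourth derivative is only a measure.

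With convexity of $v$ in hand, I would run the argument of Proposition~\ref{propmaximumcont} with $u$ replaced by $s_\fu$. Supposing $s_\fu$ attains a negative value, the minimum (being $<0=s_\fu(0)=s_\fu(1)$) is attained at an interior point $y$, where $s_\fu'(y)=0$ and $s_\fu''(y)=v(y)\geq 0$. Rolle's theorem applied to $s_\fu'$ on $[0,y]$ and on $[y,1]$ (valid since $s_\fu\in C^2$ and $s_\fu'$ vanishes at the endpoints) yields zeros of $v$ on either side of $y$; defining $a,b$ as the extreme zeros of $v$ as in~\eqref{eqdefab}, convexity with $v(a)=v(b)=0$ forces $v\leq 0$ on $[a,b]$, and then $v(y)\geq 0$ forces $v\equiv 0$ there. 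Thus $s_\fu'\equiv 0$ on $[a,b]$, and a further Rolle argument at the endpoints (using $s_\fu'(0)=s_\fu'(1)=0$) pushes $a=0$, $b=1$, whence $s_\fu\equiv s_\fu(y)<0$, contradicting $s_\fu(0)=0$. Therefore $s_\fu\geq 0$ on $[0,1]$, and evaluating at the nodes gives $\fu_j=s_\fu(x_j)\geq 0$.

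The step demanding the most care is the first one: Proposition~\ref{propmaximumcont} is stated for $u\in H^4(\Omega)\cap H^2_0(\Omega)$ with a genuine function $f$ on the right, whereas for the spline the right-hand side is only a measure. The resolution is that the proof of Proposition~\ref{propmaximumcont} never uses the full regularity $u\in H^4$; it uses only the convexity of $u''$, which for $s_\fu$ follows directly from the nonnegativity of the third-derivative jumps in Proposition~\ref{cor3rdspljump}. Accordingly, rather than citing Proposition~\ref{propmaximumcont} as a black box, I would present the convexity argument for $s_\fu''$ in self-contained form, so that the measure-valued right-hand side causes no difficulty.
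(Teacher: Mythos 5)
Your proof is correct and follows essentially the same route as the paper: pass to the spline $s_\fu$, use Proposition~\ref{cor3rdspljump} to identify $\big(\parx\big)^4 s_\fu$ with the nonnegative measure $h\sum_{j=1}^{N-1}\ff_j\delta_{x_j}$, and then rerun the minimum-point argument of Proposition~\ref{propmaximumcont} for $s_\fu$ with $w=s_\fu''$. The only (welcome) difference is that where the paper invokes ``the standard maximum principle'' for $w$ with $w''\geq 0$ in the distributional sense, you make that step self-contained by observing that the nonnegative third-derivative jumps render the piecewise-linear $w$ convex, so $w(a)=w(b)=0$ and $w(y)\geq 0$ force $w\equiv 0$ on $[a,b]$.
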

                       \begin{proof}
                       Suppose to the contrary that $\fu_{j_0}< 0$ for some index $1\leq j_0\leq N-1.$

                       Let $s_\fu\in C^2_0(\Omega)$ be the corresponding spline function. Since $s_\fu(x_{j_0})=\fu_{j_0}<0$ it follows that there exists a minimum point $y\in\Omega$ so that
                          $$s_\fu(y)=\min\set{s_\fu(x),\,\,x\in\Omega}<0.$$
                          We have
                          \be\label{eqsplineminy}
                          s_\fu'(y)=0,\,\,\,s_\fu''(y)\geq 0.
                          \ee
                          Since $s_\fu'$ vanishes at the endpoints, we infer that there are points
                         $$\xi\in (0,y),\,\,\eta\in (y,1),$$
                         such that
                         $$s_\fu''(\xi)=u''(\eta)=0.$$
                         Let
                         \be\label{eqdefabdisc}\aligned
                         a=\inf\Big\{\xi\in\Omega,\,\,s_\fu''(\xi)=0\Big\},\\
                         b=\sup \Big\{\eta\in\Omega,\,\,s_\fu''(\eta)=0\Big\}.
                         \endaligned\ee
                          Let $w(x)=s_\fu''(x).$ The function $w$ is continuous and linear in grid intervals. In view of Proposition ~\ref{cor3rdspljump} we get, in the sense of distributions,
                          \be
                            w''=h\suml_{j=1}^{N-1}\ff_j\delta_{x_j}\geq 0,
                          \ee
                          where $\delta_y$ is the Dirac measure at $y.$

                          Since $w(a)=w(b)=0,$ the standard maximum principle yields
                          $$w(x)\equiv 0,\,\,x\in[a,b],$$
                          hence
                          $$s_\fu'(x)\equiv s_\fu'(y)=0,\,\,x\in[a,b],$$
                          and in particular $s_\fu'(a)=s_\fu'(b)=0.$

                          As in the proof of Proposition ~\ref{propmaximumcont} we conclude that $a=0$ and $b=1,$ and therefore
                           $$s_\fu(x)\equiv s_\fu(y)<0,\,\,x\in[0,1],$$
                           which is a contradiction to the boundary conditions.

                       \end{proof}
                       \begin{cor} Let $\fu$ satisfy the conditions of Proposition ~\ref{propmaximumdisc}. Let $s_\fu$ be the corresponding spline function. Then
                       $$s_\fu(x)\geq 0,\quad x\in[0,1].$$
                       \end{cor}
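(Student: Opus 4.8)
The plan is to re-run the maximum-principle argument from the proof of Proposition~\ref{propmaximumdisc}, but now applied directly to the spline $s_\fu$ on all of $\Omega$, rather than only to its nodal values. It should be stressed at the outset that the pointwise bound $s_\fu(x)\geq 0$ does \emph{not} follow from the nodal positivity $\fu_j\geq 0$ supplied by Proposition~\ref{propmaximumdisc} alone: a cubic spline interpolating nonnegative nodal data can perfectly well dip below zero between nodes. The extra ingredient that rules this out is the hypothesis $\delta^4_x\fu=\ff\geq 0$, which through Proposition~\ref{cor3rdspljump} controls $s_\fu$ in the interior of each grid interval.

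First I would argue by contradiction, supposing that $s_\fu(y_0)<0$ for some $y_0\in(0,1)$. Since $s_\fu\in C^2_0(\Omega)$ with $s_\fu(0)=s_\fu(1)=0$, the continuous function $s_\fu$ attains a negative global minimum at an interior point $y$, where $s_\fu'(y)=0$ and $s_\fu''(y)\geq 0$. Because $s_\fu'$ vanishes at both endpoints, there are zeros $\xi\in(0,y)$ and $\eta\in(y,1)$ of $s_\fu''$, and I would set $a,b$ to be the extreme zeros of $s_\fu''$ in $\Omega$ exactly as in~\eqref{eqdefabdisc}.

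The crux is the auxiliary function $w(x)=s_\fu''(x)$, which is continuous and piecewise linear on the grid. By Proposition~\ref{cor3rdspljump} its distributional second derivative is the nonnegative measure $w''=h\suml_{j=1}^{N-1}\ff_j\delta_{x_j}\geq 0$, so $w$ is convex on $[a,b]$; together with $w(a)=w(b)=0$ and $w(y)\geq 0$ this forces $w\equiv 0$ on $[a,b]$. Hence $s_\fu'\equiv s_\fu'(y)=0$ throughout $[a,b]$, giving $s_\fu'(a)=s_\fu'(b)=0$, and the endpoint argument of Proposition~\ref{propmaximumdisc} then forces $a=0$ and $b=1$. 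Consequently $s_\fu\equiv s_\fu(y)<0$ on all of $[0,1]$, contradicting $s_\fu(0)=0$.

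The one genuinely new point, and the main obstacle, is the identification of the minimizer: unlike in Proposition~\ref{propmaximumdisc}, the minimizing point $y$ need not be a grid node, so the statement cannot be reduced to the nodal values and one must work with the full spline and its piecewise-linear second derivative $w$. Once the convexity of $w$ (coming from $\ff\geq 0$ via Proposition~\ref{cor3rdspljump}) is secured, the remaining steps merely duplicate the earlier proof. Anticipating Section~\ref{seckernel}, I note an alternative route: since $(\parx)^4 s_\fu=h\suml_{j=1}^{N-1}\ff_j\delta_{x_j}$ as a distribution and $s_\fu\in H^2_0(\Omega)$, one may represent $s_\fu$ through the nonnegative continuous resolvent kernel applied to this nonnegative measure, which again yields $s_\fu\geq 0$; but the self-contained argument above is more elementary and avoids forward references.
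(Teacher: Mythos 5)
Your proposal is correct and takes essentially the same route as the paper: the paper's proof of Proposition~\ref{propmaximumdisc} already passes to the global minimum of the spline $s_\fu$ over all of $[0,1]$ (a point that need not be a grid node) and derives the contradiction solely from $s_\fu(y)<0$, which is why the paper proves the corollary by simply citing that proof. The feature you flag as ``genuinely new'' --- handling a non-nodal minimizer via the convexity of $w=s_\fu''$ coming from $\ff\geq 0$ and Proposition~\ref{cor3rdspljump} --- is thus already built into the proposition's argument, so your write-up is a faithful (and correct) re-run of it.
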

                       \begin{proof}  The assumption that there exists a point $y\in(0,1)$ such that $s_\fu(y)<0$ leads to a contradiction; this follows from the proof of Proposition ~\ref{propmaximumdisc} .
                       \end{proof}
                       \section{\textbf{THE CONTINUOUS and DISCRETE RESOLVENT KERNEL}}\label{seckernel}

                        The operator $\Lcal=d^4/dx^4,$ with homogeneous boundary conditions ($\phi\in D(\Lcal)\Rightarrow \phi(0)=\phi'(0)=\phi(1)=\phi'(1)=0$) is positive definite (in particular self adjoint) with domain
        $D(\Lcal)= H^4([0,1])\cap H^2_0([0,1].$
                       We now consider the kernel of $\Lcal^{-1},$ namely,
  Green's function of the biharmonic problem
\be\label{eqcontu4f}\Lcal u=\Big(\parx\Big)^4u(x)=f(x),\ee
                       where $u\in H^4(\Omega)\cap H^2_0(\Omega).$
A standard computation leads to the following
\begin{claim}\label{greencont}
The solution of ~\eqref{eqcontu4f} is given by
\be\label{eqactionkernel}u(x)= \int_0^1 K(x,y)f(y)dy,\ee
where
\be\label{eqkernelcont} K(x,y)=\begin{cases}
\frac{1}{6}(1-x)^2y^2[2x(1-y)+ x-y], & y<x\\
\frac{1}{6}x^2(1-y)^2[2y(1-x)+y-x], & x<y
\end{cases}.\ee
\end{claim}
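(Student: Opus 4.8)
The plan is to characterize $K(\cdot,y)$, for each fixed $y\in(0,1)$, as the Green's function of the clamped operator $\Lcal$, and then verify that the explicit formula \eqref{eqkernelcont} meets this characterization. Concretely, $x\mapsto K(x,y)$ is the unique function that (i) solves $\partial_x^4 K(x,y)=0$ for $x\ne y$, hence is a cubic polynomial in $x$ on each of the intervals $(0,y)$ and $(y,1)$; (ii) satisfies the homogeneous boundary conditions $K(0,y)=\partial_x K(0,y)=K(1,y)=\partial_x K(1,y)=0$; (iii) is $C^2$ across $x=y$, i.e. $K,\partial_x K,\partial_x^2 K$ are continuous there; and (iv) has a unit jump in the third derivative, $\partial_x^3 K(y^+,y)-\partial_x^3 K(y^-,y)=1$, which is what integrating $\partial_x^4 K=\delta(x-y)$ across $y$ produces.

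First I would check that each branch of \eqref{eqkernelcont} is a cubic polynomial in $x$: in the region $y<x$ the factor $[2x(1-y)+x-y]$ is linear in $x$ and is multiplied by $(1-x)^2$, giving degree three, and symmetrically for $x<y$; this gives (i). For (ii), the branch valid near $x=0$ carries the factor $x^2$, so both $K$ and $\partial_x K$ vanish at $x=0$, while the branch valid near $x=1$ carries $(1-x)^2$, giving the conditions at $x=1$. For (iii) and (iv) I would evaluate the two branches and their first three $x$-derivatives at $x=y$ and compare; the algebra is routine but must be carried out, yielding continuity of the function and its first two derivatives together with a jump of exactly $1$ in the third. A convenient bookkeeping device is the symmetry $K(x,y)=K(y,x)$, which is manifest because interchanging $x$ and $y$ maps one branch of \eqref{eqkernelcont} onto the other; this serves both as a consistency check and as a reflection of the self-adjointness of $\Lcal$.

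With the characterization verified, I would conclude by writing, for the candidate solution \eqref{eqactionkernel},
\[
u(x)=\int_0^x K(x,y)f(y)\,dy+\int_x^1 K(x,y)f(y)\,dy,
\]
using the lower branch ($y<x$) in the first integral and the upper branch ($x<y$) in the second. Differentiating under the integral sign and applying the Leibniz rule at the moving endpoint $y=x$, the boundary contributions from the first three differentiations cancel precisely because of the continuity statements in (iii), while the fourth differentiation picks up the jump in (iv) and produces $f(x)$. Thus $\partial_x^4 u=f$, and the boundary conditions \eqref{eqbdrydata} for $u$ follow immediately from those for $K$ in (ii).

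The main obstacle is purely computational: the explicit verification of the $C^2$ matching and of the unit jump in the third derivative at $x=y$. An equivalent, equally elementary route --- perhaps what ``a standard computation'' refers to --- is to integrate $\partial_x^4 u=f$ four times, obtaining the particular solution $\int_0^x \tfrac{(x-t)^3}{6}f(t)\,dt$ plus a homogeneous cubic $a x^3+b x^2+c x+d$, and then to fix the coefficients from \eqref{eqbdrydata}. The conditions at $x=0$ force $c=d=0$, and the two conditions at $x=1$ reduce to a $2\times2$ linear system whose solution, after collecting terms into a single integral against $f$, reproduces exactly the kernel \eqref{eqkernelcont}. Either way, no genuine difficulty arises beyond careful algebra.
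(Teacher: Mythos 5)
Your proposal is correct and follows essentially the same route as the paper, which simply verifies that $\Big(\parx\Big)^4K(\cdot,y)=\delta_y$ in the sense of distributions together with the symmetry of $K$ and the homogeneous boundary conditions; your conditions (i)--(iv) are exactly the unpacking of this distributional identity (piecewise cubic, $C^2$ matching, unit jump in $\partial_x^3K$ at $x=y$), and the jump computation indeed yields $y^2(3-2y)+(1-y)^2(2y+1)=1$. The paper leaves these verifications as a ``standard computation,'' so your write-up is a more explicit version of the same argument rather than a different one.
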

\begin{proof} By the general theory, we verify that in the sense of distributions, for each fixed $y,$ as a function of $x,$
$$\Big(\parx\Big)^4K(x,y)=\delta_y,$$
where $\delta_y$ is the Dirac measure at $y.$ In addition, $K(x,y)$ is symmetric in $x,\,y$ and satisfies the homogeneous boundary conditions (as a function of $x$).
\end{proof}
 \subsection{\textbf{EXTENDING the KERNEL to  $H^{-2}(\Omega)$}}

  The domain of $\Big(\parx\Big)^4$ (as a self-adjoint operator in $L^2(\Omega),$ subject to homogeneous boundary conditions) is $H^2_0(\Omega)\cap H^4(\Omega).$ When extended (in the sense of distributions) to $H^2_0(\Omega),$ it maps it to its dual $H^{-2}(\Omega)$ ~\cite[Chapter 5]{evans}. On the other hand, the general theory (or a direct inspection of the expression ~\eqref{eqkernelcont}) ensures that, for every fixed $x\in \Omega,$ we have $K(x,\cdot)\in H^2_0(\Omega).$ It follows that Equation ~\eqref{eqactionkernel} can be extended to all $u\in H^2_0(\Omega)$ (or, alternatively, to all $f\in H^{-2}(\Omega)$) as
  \be\label{eqactionkerdisc} u(x)=<K(x,y),f(y)>,
  \ee
  where $<\cdot,\cdot>$ is the $\Big(H^2_0(\Omega),\,H^{-2}(\Omega)\Big)$ coupling.

We now fix a mesh size $h=\frac1N$ and consider the grid functions
 $\fu\in l^2_{h,0}$  vanishing at the endpoints. As in Section ~\ref{secsplinehermit}
we let $s_\fu\in H^2_0(\Omega)$ be the corresponding spline function.

   Let
   $$SP_h=\set{s_\fu\in H^2_0(\Omega),\,\,\fu\in l^2_{h,0}}.$$
   We note that $SP_h$ is a finite-dimensional subspace of $H^2_0(\Omega).$  However, it is not fully contained
   in $H^4(\Omega).$ Therefore, as observed above, we can extend the differential operator $\Big(\parx\Big)^4$ to the union
   $ \Big[H^4(\Omega)\cap H^2_0(\Omega)\Big]\cup SP_h.$

   As was shown in Proposition ~\ref{cor3rdspljump}, the action of the operator on $SP_h$ is given by a combination of Dirac delta-functions at the nodes $x_j,$ that can be written as an equality of grid functions
    $$\Big(\parx\Big)^4s_\fu= h\delta^4_x\fu.$$
    The right-hand side in this equation is a finite measure, and we recall that, owing to the Sobolev embedding theorem,  all finite measures are contained in $H^{-2}(\Omega).$

   Thus, Equation ~\eqref{eqactionkerdisc} takes here the form
     \be\label{eqschwartzkernel}\fu_j=h\suml_{i=1}^{N-1}K(x_i,x_j)(\delta^4_x\fu)_i,\quad j=1,2,\ldots,N-1.\ee
     \begin{cor}\label{cordiscresolvent} The discrete operator  $(\delta^4_x)^{-1}: l^2_{h,0}\to l^2_{h,0}$
         is represented by a matrix $\set{K^h_{i,j}}_{1\leq i,j\leq N-1},$   explicitly given by
          \be\label{eqKheqK} K^h_{i,j}=h K(x_i,x_j),\quad 1\leq i,j\leq N-1,\ee
          where $K(x,y)$ is the resolvent kernel of $\Big(\parx\Big)^4,$ as  in Equation ~\eqref{eqkernelcont}.
     \end{cor}

\section{\textbf{CONTINUOUS and DISCRETE EIGENVALUES}}\label{seceigenvalues}
\subsection{\textbf{THE CONTINUOUS OPERATOR}}
  We now consider the eigenvalues of the  operator $\Lcal,$ introduced in Section ~\ref{seckernel}.

        The operator has a compact resolvent, and the kernel $K$ of $\Lcal^{-1}$ is given in Claim ~\ref{greencont}.  The spectrum
        of $\Lcal$ consists of an
        increasing sequence of positive simple eigenvalues, which we designate as  $\set{0<\lambda_1<\lambda_2<...<\lambda_k<...}.$

        Since these eigenvalues play an important role in the sequel, we provide below the details of their evaluation, repeating the proof of ~\cite[Lemma 5.5.4]{davies}.

       Let $\phi\in H^4([0,1])\cap H^2_0([0,1])$ be a real eigenfunction
         $$\frac{d^4}{dx^4}\phi=\lambda\phi,\quad \lambda\in \set{0<\lambda_1\leq...\leq\lambda_k...}.$$

         Clearly, this function must be of the form
         \be\label{eigenf4}
         \phi(x)=A\cos(\beta x)+B\sin(\beta x)+C\cosh(\beta x)+D\sinh(\beta x),
         \ee
         where $\beta$ is real and $\beta^4=\lambda.$

         The conditions $\phi(0)=\phi'(0)=0$ clearly imply
         $$A=-C,\quad B=-D,$$
         and $\phi(1)=0$ yields
         \be\label{ABco}A(\cos\beta -\cosh\beta )=-B(\sin\beta -\sinh\beta ).\ee
          The remaining condition $\phi'(1)=0$ yields
           $$-B(\cos\beta -\cosh\beta )=A(-\sin\beta -\sinh\beta ).$$
           Multiplying the two equations and invoking standard identities we get
           \be\label{betaco}
           \cos\beta\cosh\beta=1,
           \ee
            which is to be considered as the equation determining the discrete eigenvalues.

            Changing $\beta\to -\beta$ we can keep $A,C$ unmodified but reverse the signs of $B,D.$
            It therefore follows that for $-\beta<0$ (solution of ~\eqref{betaco}) we get the same eigenfunction
            ~\eqref{eigenf4} as for $\beta>0,$ and we can consider only positive $\beta.$

            We therefore get the full set of eigenfunctions (for $\beta>0$ solving ~\eqref{betaco}),
            \be\label{twoeigenb}
            \phi(x)=A\cos(\beta x)+B\sin(\beta x)-A\cosh(\beta x)-B\sinh(\beta x),
            %\phi^-(x)=A\cos(\beta x)-B\sin(\beta x)-A\cosh(\beta x)+B\sinh(\beta x).
                        \ee
                        where $A,B$ satisfy ~\eqref{ABco}.

            In order to estimate the location of the eigenvalues it therefore suffices to consider the positive solutions of ~\eqref{betaco}. The following claim is easy to verify.
            \begin{claim}\label{claimcontevs} Equation ~\eqref{betaco} has a sequence of positive solutions as follows.
              \be \label{allbeta}\begin{cases}
               \beta_0\in (3\pi/2,2\pi),\\
               \beta^{(1)}_k\in (2k\pi,(2k+1/2)\pi),\quad k=1,2,...\\
               \beta^{(2)}_k\in ((2k+3/2)\pi,(2(k+1)\pi),\quad k=1,2,...
               \end{cases}\ee
               The corresponding eigenvalues $\lambda_0=\beta_0^4,\,\lambda^{(1)}_k=(\beta^{(1)}_k)^4,\,\lambda^{(2)}_k=(\beta^{(2)}_k)^4$ of $\Lcal$ are all simple.
            \end{claim}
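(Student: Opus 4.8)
The plan is to study the transcendental equation \eqref{betaco} through the single real function $F(\beta)=\cos\beta\cosh\beta-1$, whose positive zeros are exactly the sought values of $\beta$. First I would eliminate the intervals that cannot contain a zero: since $\cosh\beta\ge 1$ for all $\beta$, whenever $\cos\beta\le 0$ we have $\cos\beta\cosh\beta\le 0<1$, so $F(\beta)<0$ there. Consequently every positive solution must lie in one of the open intervals $((2m-\tfrac12)\pi,(2m+\tfrac12)\pi)$, $m\ge 0$, on which $\cos\beta>0$. At the endpoints of these intervals $\cos\beta=0$, so $F=-1$, while at the midpoints $F(2m\pi)=\cosh(2m\pi)-1>0$ for $m\ge 1$; the intermediate value theorem then produces a zero in each of the two half-intervals $((2m-\tfrac12)\pi,2m\pi)$ and $(2m\pi,(2m+\tfrac12)\pi)$.

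To turn existence into an exact enumeration I would differentiate, obtaining $F'(\beta)=\sinh\beta\cos\beta-\cosh\beta\sin\beta=\cosh\beta\cos\beta(\tanh\beta-\tan\beta)$, so that on the intervals where $\cos\beta>0$ the sign of $F'$ equals the sign of $\tanh\beta-\tan\beta$. On a left half-interval $((2m-\tfrac12)\pi,2m\pi)$ one has $\tan\beta<0<\tanh\beta$, hence $F'>0$ and $F$ increases strictly from $-1$ to a positive value: exactly one zero. On a right half-interval $(2m\pi,(2m+\tfrac12)\pi)$ the quantity $\tan\beta$ increases from $0$ to $+\infty$ while $\tanh\beta$ stays in $(0,1)$, so $\tanh\beta-\tan\beta$ changes sign exactly once; thus $F$ first increases and then strictly decreases to $-1$, again giving exactly one zero. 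The interval $(0,\tfrac\pi2)$ is the exceptional case $m=0$: here $F(0)=0$ and the elementary bounds $\tanh\beta<\beta<\tan\beta$ force $F'<0$, so $F<0$ on $(0,\tfrac\pi2)$ and there is no positive root — precisely why the enumeration begins with $\beta_0\in(3\pi/2,2\pi)$ rather than with a root near $\pi/2$. Relabelling the left half-interval of the peak at $2\pi$ as $\beta_0$, the right half-intervals of the peaks $2k\pi$ as $\beta^{(1)}_k$, and the left half-intervals of the peaks $2(k+1)\pi$ as $\beta^{(2)}_k$ reproduces exactly the three families in \eqref{allbeta}.

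Finally, for simplicity of the eigenvalues I would observe that each positive root $\beta$ yields, via \eqref{twoeigenb}, an eigenfunction determined by the pair $(A,B)$ subject to the single relation \eqref{ABco}; the associated homogeneous $2\times 2$ system has vanishing determinant (a short computation shows this determinant equals $2(1-\cos\beta\cosh\beta)$, i.e.\ it is exactly \eqref{betaco}) but it is not the zero matrix for $\beta>0$, since that would force $\sinh\beta=0$. Hence the system has rank one and a one-dimensional solution space, and because distinct positive roots $\beta$ give distinct $\lambda=\beta^4$, each eigenvalue is simple. I expect the only genuinely delicate point to be the uniqueness argument on the right half-intervals $(2k\pi,(2k+\tfrac12)\pi)$, where $F$ is non-monotone and one must exploit the single sign change of $\tanh\beta-\tan\beta$ to exclude a second root; the existence statements and the $\cos\beta\le 0$ exclusion are routine.
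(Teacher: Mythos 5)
Your proof is correct, but there is nothing in the paper to compare it with: the paper introduces \eqref{betaco} and then simply asserts the claim (``the following claim is easy to verify''), so your write-up supplies precisely the verification the authors omitted. Your route --- excluding the region $\cos\beta\le 0$ since $\cosh\beta\ge 1$, applying the intermediate value theorem on the two half-intervals flanking each $2m\pi$, controlling uniqueness through the factorization $F'(\beta)=\cosh\beta\cos\beta(\tanh\beta-\tan\beta)$, ruling out $(0,\pi/2)$ via $\tanh\beta<\beta<\tan\beta$, and getting simplicity from the rank-one structure of the $2\times 2$ boundary system --- is sound, and it in fact proves more than the claim literally states: \emph{exactly} one root in each interval of \eqref{allbeta} and no positive roots elsewhere, which is what the paper implicitly relies on later when it uses the enumeration of all eigenvalues and the asymptotics $\lambda_k\approx k^4$. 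Two details are worth making explicit. First, the ``single sign change'' of $\tanh\beta-\tan\beta$ on $(2k\pi,(2k+\frac{1}{2})\pi)$, which you flagged as the delicate step, is immediate: $\frac{d}{d\beta}(\tanh\beta-\tan\beta)=\operatorname{sech}^2\beta-\sec^2\beta<0$, so the difference is strictly decreasing there and crosses zero exactly once; this closes the only gap in your uniqueness argument. Second, on simplicity: with the natural row order the determinant of the system built from \eqref{ABco} and its companion equation is $2(\cos\beta\cosh\beta-1)$ (your sign differs by row arrangement, which is immaterial), your observation that a vanishing matrix would force $\sinh\beta=0$ correctly gives rank one, and since distinct positive roots $\beta$ yield distinct $\lambda=\beta^4$ and $\Lcal$ is self-adjoint (so geometric multiplicity is the only multiplicity in question), each eigenvalue is simple as claimed.
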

               We denote by
              $$ \set{\phi_1,...,\phi_k...}$$
              the orthonormal set of the associated eigenfunctions.

              \subsection{\textbf{THE DISCRETE OPERATOR}}

We simplify the notation above and denote by $\set{0<\lambda_1<\lambda_2<\ldots<\lambda_k<\ldots} $  the (infinite) sequence of eigenvalues of
$\Lcal=\Big(\parx\Big)^4.$

 Given $h=\frac1N,$ let $$\Lambda_h=\set{0<\lambda_{h,1}\leq\lambda_{h,2}\leq\ldots\leq\lambda_{h,N-1}} $$ be the finite sequence of eigenvalues of $\delta^4_x.$

 We denote by $\Gamma$ the sum
   $$\Gamma=\suml_{i=1}^\infty \lambda_i^{-1},$$
   and let
   $$\Gamma_h=\suml_{i=1}^{N-1} \lambda_{h,i}^{-1}.$$
   \begin{prop}There exists a constant $C>0,$ independent of $h,$ so that
   \be\label{eqestimgammaGammah}
           |\Gamma-\Gamma_h|\leq Ch^4.\ee
   \end{prop}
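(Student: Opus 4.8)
The plan is to recognize both $\Gamma$ and $\Gamma_h$ as traces of the respective inverse operators and then to reduce their difference to a single quadrature error for the diagonal of the Green's kernel.

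First I would observe that $\Gamma=\mathrm{trace}(\Lcal^{-1})$. Since the continuous eigenvalues grow like $\lambda_k\sim k^4$ (the $\beta$'s in Claim~\ref{claimcontevs} satisfy $\beta\sim 2k\pi$, so $\lambda=\beta^4$), the sum $\sum_i\lambda_i^{-1}$ converges and $\Lcal^{-1}$ is trace-class; as its kernel $K(x,y)$ from Claim~\ref{greencont} is continuous, the classical trace formula gives $\Gamma=\int_0^1 K(x,x)\,dx$. On the discrete side, $\Gamma_h=\mathrm{trace}\big((\delta^4_x)^{-1}\big)$ by definition of $\Lambda_h$. By Corollary~\ref{cordiscresolvent}, the operator $(\delta^4_x)^{-1}$ is represented in the coordinate basis of $l^2_{h,0}$ by the matrix $\set{hK(x_i,x_j)}_{1\le i,j\le N-1}$, and the trace of any operator is the sum of the diagonal entries of its matrix in any basis; hence $\Gamma_h=h\sum_{j=1}^{N-1}K(x_j,x_j)$.

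Next I would make the diagonal explicit. Letting $y\to x$ in~\eqref{eqkernelcont} yields $g(x):=K(x,x)=\tfrac13 x^3(1-x)^3$, a polynomial that vanishes to third order at both endpoints; in particular $g(0)=g(1)=0$, $g'(0)=g'(1)=0$ and $g''(0)=g''(1)=0$, while $g'''(0),\,g'''(1)$ do not vanish. The two quantities to compare are thus $\Gamma=\int_0^1 g(x)\,dx$ and $\Gamma_h=h\sum_{j=1}^{N-1}g(x_j)$. Because $g(x_0)=g(x_N)=0$, the missing endpoint contributions $\tfrac{h}{2}(g(x_0)+g(x_N))$ are zero, so $\Gamma_h$ is exactly the composite trapezoidal-rule approximation of $\Gamma$ on the uniform grid of mesh $h$.

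Finally I would invoke the Euler--Maclaurin formula, according to which the trapezoidal error equals $\sum_{k\ge1}\frac{B_{2k}}{(2k)!}h^{2k}\big[g^{(2k-1)}(1)-g^{(2k-1)}(0)\big]$, a sum with finitely many nonzero terms since $g$ is a polynomial. The would-be leading $O(h^2)$ contribution carries the factor $g'(1)-g'(0)$, which vanishes by the endpoint behaviour of $g$; the first surviving term is the $k=2$ term, proportional to $h^4\big[g'''(1)-g'''(0)\big]$. This yields $|\Gamma-\Gamma_h|\le Ch^4$ with an explicit constant (and, since $g'''(1)-g'''(0)\ne 0$, the rate is sharp). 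The only genuinely delicate point is the justification of the two trace identities --- in particular the trace-class property of $\Lcal^{-1}$ together with the continuous-kernel diagonal trace formula, and the identification of $\mathrm{trace}\big((\delta^4_x)^{-1}\big)$ with $h\sum_j K(x_j,x_j)$ via Corollary~\ref{cordiscresolvent} --- after which the estimate is a direct consequence of the vanishing of $g,g',g''$ at the endpoints.
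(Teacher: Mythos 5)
Your proposal is correct, and its first half coincides with the paper's proof: the paper likewise invokes the standard trace formula to write $\Gamma=\int_0^1 K(x,x)\,dx$ and $\Gamma_h=h\sum_{i=1}^{N-1}K(x_i,x_i)$ (the latter resting on Corollary~\ref{cordiscresolvent}, exactly as you argue), and identifies the diagonal $K(x,x)=\tfrac13 x^3(1-x)^3$. Where you diverge is the final step: the paper simply evaluates the sum $\Gamma_h=\tfrac{h}{3}\sum_{i=1}^{N-1}(ih)^3(1-ih)^3$ in closed form (power sums of a degree-$6$ polynomial), obtaining $\Gamma_h=\tfrac{1}{420}+\tfrac{1}{180}h^4-\tfrac{1}{126}h^6$ against $\Gamma=\tfrac{1}{420}$, whereas you invoke Euler--Maclaurin and observe that the $h^2$ term dies because $g'(0)=g'(1)=0$ (indeed $g$, $g'$, $g''$ all vanish at the endpoints), leaving the $h^4$ term with coefficient proportional to $g'''(1)-g'''(0)=-4\neq 0$. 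Your route is more conceptual: it explains \emph{why} the trapezoidal rule beats its generic $O(h^2)$ rate here --- precisely the point the paper only addresses informally in the remark following its proof --- and since $g$ is a polynomial the Euler--Maclaurin expansion terminates, so your method reproduces the paper's exact constants ($\tfrac{B_4}{4!}h^4[g'''(1)-g'''(0)]=\tfrac{1}{180}h^4$ and the $-\tfrac{1}{126}h^6$ term from $g^{(5)}$) and also shows the $h^4$ rate is sharp. The paper's computation buys the same explicit answer with less machinery; your version generalizes to any kernel whose diagonal has matching low-order endpoint derivatives. One point you rightly flag but should make airtight: the trace-class property of $\Lcal^{-1}$ plus continuity of $K$ gives $\Gamma=\int_0^1K(x,x)\,dx$ via Mercer's theorem (using positivity of $\Lcal^{-1}$), and $\lambda_k\sim k^4$ from Claim~\ref{claimcontevs} guarantees summability; the paper takes this as standard, and with that citation your argument is complete.
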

   \begin{proof}We introduce the (infinite) set of reciprocals of the eigenvalues of $\Lcal,$
   namely, the eigenvalues of the kernel $K(x,y)$ ~\eqref{eqkernelcont},

   \be\label{eqLmbdamin1}\Lambda^{-1}=\set{\lambda_{1}^{-1}>\lambda_{2}^{-1}>\ldots>\lambda_{k}^{-1}\ldots >0} , \ee
                   while
                     \be\label{eqLmbdahmin1}\Lambda_h^{-1}=
                     \set{\lambda_{h,1}^{-1}\geq\lambda_{h,2}^{-1}\geq\ldots\geq\lambda_{h,N-1}^{-1}>0} \ee
                    is the set of eigenvalues of $(\delta^4_x)^{-1},$ corresponding to the discrete kernel $K^h$ ~\eqref{eqKheqK}.

   By the standard trace formula, it follows that
   \be \Gamma=\int_0^1K(x,x)dx,\quad \Gamma_h=h\suml_{i=1}^{N-1}K(x_i,x_i).
   \ee
    Since $K(x,x)=\frac13 x^3(1-x)^3,$ the numerical values of $\Gamma$ and $C$ can easily be calculated, and it turns out that
    \be  \Gamma=\frac{1}{420}.
    \ee
    On the other hand
    \be
    \Gamma_h=\frac{h}{3}\suml_{i=1}^{N-1}(ih)^3(1-ih)^3=\frac{1}{420}+\frac{1}{180}h^4-\frac{1}{126}h^6,
    \ee
    so that ~\eqref{eqestimgammaGammah} is established (and even with an explicit constant).
   \end{proof}
   \begin{rem}

       Observe that $\Gamma_h$ is the discrete trapezoidal approximation to the integral for $\Gamma.$
       By the standard estimate for the trapezoidal rule, we obtain
       \be\label{eqestimgammaGammah2}
   |\Gamma-\Gamma_h|\leq Ch^2,
   \ee
        with $C=\frac{1}{12}\max\limits_{0\leq x\leq 1}|(\parx)^2K(x,x)|=\frac{1}{96}.$

        The fourth-order estimate ~\eqref{eqestimgammaGammah} is clearly a result of a closer inspection of the kernel $K.$
   \end{rem}
 The ``collective'' estimate ~\eqref{eqestimgammaGammah} does not imply that an estimate of the form $\lambda_i^{-1}-\lambda_{h,i}^{-1}=O(h^4)$ is valid, for any fixed value of the index $i.$ However, the next proposition provides a weaker statement in this direction. It will play a key role in the final, stronger Theorem ~\ref{thmoptimalallevs} below.
         \begin{prop}\label{propdistLambdah} For any fixed integer $i\geq 1$ there exist positive constants $C,\,h_0>0$ such that for any $0<h=\frac1N<h_0$ we have
         \be\label{eqdistlambdai}dist \{\lambda_i^{-1},\,\Lambda_h^{-1}\}\leq Ch^4,\ee
         where $\Lambda_h^{-1}$ is the set of reciprocals introduced in ~\eqref{eqLmbdahmin1}.
       %  $$=\set{\lambda_{h,1}^{-1}>\lambda_{h,2}^{-1}>\ldots>\lambda_{h,N}^{-1}>0}.$$
         \end{prop}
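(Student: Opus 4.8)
The plan is to exhibit the grid restriction $\phi_i^\ast$ of the $i$-th continuous eigenfunction as an \emph{approximate eigenvector} of the discrete resolvent $(\delta_x^4)^{-1}$, with approximate eigenvalue $\lambda_i^{-1}$ and defect $O(h^4)$, and then to invoke the elementary spectral estimate for self-adjoint operators. Let $\phi_i\in H^4(\Omega)\cap H^2_0(\Omega)$ be the normalized eigenfunction, so $\Lcal^{-1}\phi_i=\lambda_i^{-1}\phi_i$, i.e. $\int_0^1 K(x,y)\phi_i(y)\,dy=\lambda_i^{-1}\phi_i(x)$, and set $\phi_i^\ast=(\phi_i(x_j))_{j=0}^N\in l^2_{h,0}$ (the endpoint values vanish by the boundary conditions). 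By Corollary \ref{cordiscresolvent}, applying $(\delta_x^4)^{-1}$ to $\phi_i^\ast$ produces at the node $x_j$
$$\big((\delta_x^4)^{-1}\phi_i^\ast\big)_j=h\sum_{k=1}^{N-1}K(x_j,x_k)\phi_i(x_k),$$
which is exactly the composite trapezoidal approximation (the endpoint weights being irrelevant, since the integrand vanishes at $0$ and $1$) of $\int_0^1 K(x_j,y)\phi_i(y)\,dy=\lambda_i^{-1}\phi_i(x_j)$. Thus the nodal defect is precisely the quadrature error, and the whole statement reduces to showing this error is $O(h^4)$ uniformly in $j$.

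The heart of the matter is therefore to prove, for $g(y):=K(x_j,y)\phi_i(y)$,
$$\Big|h\sum_{k=1}^{N-1}g(x_k)-\int_0^1 g(y)\,dy\Big|\leq Ch^4,\qquad 1\leq j\leq N-1,$$
with $C$ independent of $j$. Since the composite trapezoidal rule is generically only $O(h^2)$, the gain to $O(h^4)$ is the crux, and it rests on three structural facts. First, from the explicit kernel \eqref{eqkernelcont}, $K(x_j,\cdot)$ has a double zero at each endpoint, and $\phi_i(0)=\phi_i'(0)=\phi_i(1)=\phi_i'(1)=0$; hence $g'(0)=g'(1)=0$. Second, $g$ is smooth on $[0,x_j]$ and on $[x_j,1]$ and, since the biharmonic Green's function $K(x_j,\cdot)$ is $C^2$ across the diagonal $y=x_j$ (its only singularity being a jump in the third derivative), $g'$ is continuous at $x_j$. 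Third, the singular point $y=x_j$ is itself a grid node, so the composite trapezoidal sum over $[0,1]$ splits \emph{exactly} into the sums over $[0,x_j]$ and $[x_j,1]$. Applying the Euler--Maclaurin estimate $\mathrm{Trap}-\int=\frac{h^2}{12}\big[g'(\beta)-g'(\alpha)\big]+O\!\big(h^4\|g^{(4)}\|_{L^1}\big)$ on each smooth piece and summing, the $h^2$ contributions cancel completely: the boundary terms vanish because $g'(0)=g'(1)=0$, and the two interior terms at $x_j$ cancel because $g'(x_j^-)=g'(x_j^+)$. What survives is $O(h^4)$; and since $K(x_j,\cdot)$ is a cubic polynomial on each side of $x_j$ with coefficients bounded uniformly in $x_j\in[0,1]$, the quantity $\|g^{(4)}\|_{L^1}$ is bounded uniformly in $j$, yielding the claimed uniform $O(h^4)$ bound.

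Finally I would assemble the pieces. The uniform nodal estimate gives $\big|(\delta_x^4)^{-1}\phi_i^\ast-\lambda_i^{-1}\phi_i^\ast\big|_h\leq Ch^4$, while $|\phi_i^\ast|_h^2=h\sum_j\phi_i(x_j)^2$ is a Riemann sum for $\int_0^1\phi_i^2=1$, so $|\phi_i^\ast|_h\geq\tfrac12$ for $h<h_0$. Since $(\delta_x^4)^{-1}$ is self-adjoint and positive with respect to $(\cdot,\cdot)_h$, expanding $\phi_i^\ast$ in an orthonormal eigenbasis of $(\delta_x^4)^{-1}$ gives $\mathrm{dist}\{\lambda_i^{-1},\Lambda_h^{-1}\}\leq\big|(\delta_x^4)^{-1}\phi_i^\ast-\lambda_i^{-1}\phi_i^\ast\big|_h/|\phi_i^\ast|_h\leq 2Ch^4$, which is \eqref{eqdistlambdai}. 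The main obstacle, as indicated, is the quadrature estimate — justifying the cancellation of the $O(h^2)$ terms and controlling the remainder uniformly in the node index $j$; once the double-zero boundary behavior of $K$, the $C^2$-matching of $K$ across the diagonal, and the coincidence of the singularity with a grid node are in hand, the $O(h^4)$ rate follows.
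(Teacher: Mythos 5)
Your proposal is correct, and its overall skeleton coincides with the paper's: both exhibit the grid restriction $\phi_i^\ast$ as an approximate eigenvector of $(\delta_x^4)^{-1}$ with defect $O(h^4)$ in the $|\cdot|_h$ norm, note $|\phi_i^\ast|_h$ is bounded away from $0$ for small $h$, and conclude with the standard spectral bound for self-adjoint operators (your eigenbasis expansion is the same fact the paper phrases via the resolvent identity $\mathrm{dist}\{\lambda_i^{-1},\Lambda_h^{-1}\}=\big|\big(\lambda_i^{-1}-(\delta_x^4)^{-1}\big)^{-1}\big|_h^{-1}$). The genuine difference lies in the key quantitative input. The paper obtains the defect estimate in one line by combining Claim~\ref{claim21} (the fourth-order accuracy of the scheme, imported from ~\cite[Theorem 10.19]{book}) with Corollary~\ref{cordiscresolvent}: taking $f=\lambda_i\phi_i$ and $u=\phi_i$ in Claim~\ref{claim21} immediately gives $|\lambda_i^{-1}\phi_i^\ast-(\delta_x^4)^{-1}\phi_i^\ast|_\infty\leq Ch^4$. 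You instead prove this estimate from scratch as a Nystr\"{o}m quadrature error: Euler--Maclaurin on each side of the diagonal, with the $O(h^2)$ terms cancelling because of the double endpoint zeros of $K(x_j,\cdot)\phi_i$, the $C^1$ (indeed $C^2$) matching of $K$ across $y=x_j$, and the coincidence of the diagonal singularity with a grid node; your uniformity argument in $j$ (piecewise-cubic structure of $K(x_j,\cdot)$ with coefficients bounded in $x_j$, and an Euler--Maclaurin constant that is harmless even for the short subinterval when $j=1$ or $j=N-1$) is sound. What each approach buys: the paper's is shorter and reuses machinery already established for the scheme, while yours is self-contained, explains structurally why the rate is $h^4$ rather than the generic trapezoidal $h^2$, and in effect substantiates the Nystr\"{o}m-method viewpoint of Remark~\ref{remnystrom} with an explicit rate --- at the cost of re-deriving (only for the special right-hand sides $\lambda_i\phi_i$, which is all the proposition needs) a special case of Claim~\ref{claim21}.
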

          \begin{proof} Let $\phi_i(x)\in H^2_0(\Omega)$ be a normalized eigenfunction of $\Big(\parx\Big)^4,$ corresponding to
          $\lambda_i.$ Recall that $\phi_i\in C^\infty$  and $\Big(\parx\Big)^{-4}\phi_i=\lambda_i^{-1}\phi_i.$ Hence
          $$\lambda_i^{-1}\phi_i(x)=\int_0^1K(x,y)\phi_i(y)dy,\quad x\in\Omega.$$

            For simplicity, we denote by $\set{x_j=jh,\,0\leq j\leq N}$ the grid points , omitting the obvious dependence on $h.$

          Let $\phi_i^\ast=\set{\phi_i(x_0),\ldots,\phi_i(x_k),\ldots,\phi_i(x_N)}$ be the corresponding grid function.

          In view of Claim ~\ref{claim21} and Corollary ~\ref{cordiscresolvent} we have for all $0\leq k\leq N,$
          $$\Big|\lambda_i^{-1}\phi_i(x_k)-h\suml_{j=0}^N K(x_k,x_j)\phi_i(x_j)\Big|\leq Ch^4,$$
          where here and below $C>0$ is a constant depending only on $\phi_i$ that changes from one estimate to the next.  Using the notation ~\eqref{eqKheqK} this can be rewritten as
          \be\label{eqdiffKhviast}
          \Big|\lambda_i^{-1}\phi_i^\ast(x_k)-\suml_{j=0}^N K^h_{k,j}\phi_i^\ast(x_j)\Big|\leq Ch^4,
          \ee
           that is
           $$\Big|(\lambda_i^{-1}-(\delta_x^4)^{-1})\phi_i^\ast\Big|_h\leq Ch^4.$$
           On the other hand, the smoothness of the normalized $\phi_i$ yields
           $$|\phi_i^\ast|_h\geq 1-Ch.$$
           The last two estimates imply the following estimate of the operator norm,
           \be\label{eqopernormKh}
           \Big|\Big(\lambda_i^{-1}-(\delta_x^4)^{-1}\Big)^{-1}\Big|_h\geq \frac{1-Ch}{Ch^4}\geq Ch^{-4},\ee
           for $h<h_0.$ By a standard result concerning resolvents of self-adjoint operators we conclude that
           $$dist \{\lambda_i^{-1},\,\Lambda_h^{-1}\}=\Big|\Big(\lambda_i^{-1}-(\delta_x^4)^{-1}\Big)^{-1}\Big|_h^{-1},$$
           which concludes the proof of the proposition.
          \end{proof}
          \begin{rem} Proposition ~\ref{propdistLambdah} shows that in any neighborhood of $\lambda_i^{-1}$ there is a discrete eigenvalue $\lambda_{h,k}^{-1},$ provided $h>0$ is sufficiently small. Observe, however, that we cannot infer that, even  the largest eigenvalue (of $\Lcal^{-1}$) $\lambda_1^{-1}$ is the limit, as $h\to 0,$ of the largest discrete eigenvalue $\lambda_{h,1}^{-1}$ (of ($\delta_x^4)^{-1}$).   This is done in Theorem ~\ref{thmconvergelambda} below.
          \end{rem}

          \begin{rem}\label{remnystrom}
          In view of Corollary ~\ref{cordiscresolvent} the discrete eigenvalues in $\Lambda_h^{-1}$ are obtained by a ``Nystr\"{o}m method''~\cite{nystrom}, namely, eigenvalues of the discretized kernel. The fact that  for any fixed integer $i\geq 1$
         $$\lim\limits_{h\to 0}dist \{\lambda_i^{-1},\Lambda_h^{-1}\}=0,$$
         follows from ~\cite[Theorem 3]{nystrom}.   Proposition ~\ref{propdistLambdah} establishes an ``optimal'' $O(h^4)$ rate to this convergence.
          \end{rem}
          \subsection{\textbf{CONVERGENCE OF THE FIRST DISCRETE EIGENVALUE}}

          For the first discrete eigenvalue $\lambda_{h,1}$ we can establish its convergence (as $h\downarrow 0$) to $\lambda_1$ as follows.
      \begin{thm}\label{thmconvergelambda} The sequence of the discrete first eigenvalues of $\delta_x^4$ converges to the first eigenvalue of the continuous operator $\Lcal:$

      \be\label{eqlimlambda1}
      \lim\limits_{h\to 0}\lambda_{h,1}=\lambda_1.
      \ee

      \end{thm}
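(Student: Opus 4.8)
The plan is to squeeze $\lambda_{h,1}$ between $\lambda_1$ from both sides using the variational (Rayleigh quotient) characterizations of the two eigenvalues, with Equation \eqref{delta4} serving as the bridge. Taking $\fv=\fu$ in \eqref{delta4} gives $(\delta^4_x\fu,\fu)_h=\int_0^1|s_\fu''(x)|^2dx$, and since $\delta^4_x$ is positive and self-adjoint we have $\lambda_{h,1}=\min_{0\ne\fu\in l^2_{h,0}}\frac{\int_0^1|s_\fu''|^2dx}{|\fu|_h^2}$, while $\lambda_1=\min_{0\ne u\in H^2_0(\Omega)}\frac{\int_0^1|u''|^2dx}{\int_0^1|u|^2dx}$. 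Everything reduces to comparing these two quotients as $h\to0$.

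For the upper bound I would use the continuous first eigenfunction $\phi_1$ as a trial object. Since $\phi_1\in H^2_0(\Omega)$ and $\phi_1(0)=\phi_1'(0)=\phi_1(1)=\phi_1'(1)=0$, its sampled grid function $\fu=\phi_1^\ast$ lies in $l^2_{h,0}$, and $\phi_1$ is itself an admissible competitor in the minimization defining the spline $s_\fu$. The minimizing property of the spline therefore gives $\int_0^1|s_\fu''|^2dx\le\int_0^1|\phi_1''|^2dx=\lambda_1$ (using the normalization $\int_0^1\phi_1^2=1$), while $|\fu|_h^2=h\sum_{j=0}^N\phi_1(x_j)^2\to\int_0^1\phi_1^2dx=1$ is a Riemann sum converging by continuity of $\phi_1$. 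Substituting $\fu$ into the discrete Rayleigh quotient yields $\lambda_{h,1}\le\lambda_1/|\fu|_h^2$, hence $\limsup_{h\to0}\lambda_{h,1}\le\lambda_1$.

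For the lower bound I would begin with a normalized discrete first eigenfunction $\fu^h$, with $|\fu^h|_h=1$, and pass to its spline $w_h=s_{\fu^h}\in H^2_0(\Omega)$. By the upper bound just established, $\int_0^1|w_h''|^2dx=\lambda_{h,1}$ is uniformly bounded, so $\{w_h\}$ is bounded in $H^2_0(\Omega)$; along a subsequence it converges weakly in $H^2_0(\Omega)$ to some $\psi$, and by the compact embedding $H^2_0(\Omega)\hookrightarrow C[0,1]$ this convergence is uniform. The decisive point is transferring the normalization to $\psi$: writing $|\fu^h|_h^2=h\sum_{j=0}^N w_h(x_j)^2$ as a Riemann sum and invoking uniform convergence $w_h\to\psi$, one finds $\int_0^1\psi^2dx=\lim|\fu^h|_h^2=1$, so $\psi\ne0$ and $\psi\in H^2_0(\Omega)$. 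Weak lower semicontinuity of $u\mapsto\int_0^1|u''|^2dx$ then gives $\int_0^1|\psi''|^2dx\le\liminf\lambda_{h,1}$, and the continuous variational principle yields $\lambda_1\le\int_0^1|\psi''|^2dx/\int_0^1\psi^2dx\le\liminf\lambda_{h,1}$. Applying this along a subsequence realizing the liminf gives $\liminf_{h\to0}\lambda_{h,1}\ge\lambda_1$, and together with the upper bound this proves \eqref{eqlimlambda1}.

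The one genuinely non-routine step is the transfer of the normalization in the lower bound: the eigenfunctions $\fu^h$ carry no a priori smoothness, so one cannot simply equate the discrete $l^2_h$ norm with a continuous $L^2$ norm of $w_h$. What rescues the argument is precisely the energy bound $\int_0^1|w_h''|^2dx=\lambda_{h,1}\le C$, which confines $\{w_h\}$ to a compact subset of $C[0,1]$; the discrete norm $h\sum_j w_h(x_j)^2$ is then a Riemann sum of a uniformly convergent sequence and must converge to $\int_0^1\psi^2dx$, guaranteeing $\psi\ne0$ so that the Rayleigh quotient for $\lambda_1$ is legitimately applicable. I note that Proposition \ref{propdistLambdah} already supplies one inequality (some discrete eigenvalue lies within $O(h^4)$ of $\lambda_1^{-1}$, hence $\lambda_{h,1}^{-1}\ge\lambda_1^{-1}-Ch^4$), but it cannot by itself prevent the \emph{top} discrete eigenvalue from overshooting; the compactness argument above is exactly what closes that gap.
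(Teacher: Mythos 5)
Your proof is correct, but it takes a genuinely different route from the paper's. The paper works entirely with the \emph{inverse} operators: it proves $\lim_{h\to 0}\lambda_{h,1}^{-1}=\lambda_1^{-1}$ using the maximum characterization of the largest eigenvalue of the kernels $K$ and $K^h$, together with the identity $K^h_{i,j}=hK(x_i,x_j)$ of Corollary~\ref{cordiscresolvent}. For the $\liminf$ direction it takes a smooth, compactly supported near-maximizer $u^\eps$ of the continuous kernel form, averages it into a step function and a grid function, and compares the continuous and discrete quadratic forms using only the continuity of $K$; for the $\limsup$ direction it freezes the discrete eigenvector into a piecewise constant function $u^h$ and replaces $K$ by the piecewise constant kernel $K_h$ of \eqref{eqDefineKh}, so that the continuous form evaluated at $u^h$ reproduces $\lambda_{h,1}^{-1}$ exactly. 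You instead work with the \emph{forward} operators' Rayleigh quotients through the spline-energy identity \eqref{delta4}: your upper bound samples $\phi_1$ and invokes the extremal property of the interpolating spline ($\phi_1\in\Acal$ since it satisfies the clamped boundary conditions), which is essentially a rate-free variant of Claim~\ref{claimuplimith4} (proved there with a different trial vector and yielding $O(h^4)$); your lower bound uses weak compactness and weak lower semicontinuity in $H^2_0(\Omega)$, with the compact embedding into $C[0,1]$ transferring the normalization $|\fu^h|_h=1$ to $\int_0^1\psi^2\,dx=1$ -- the step you rightly flag as the crux, and which you handle correctly: the energy bound $\int_0^1|w_h''|^2\,dx=\lambda_{h,1}\leq C$ gives uniform convergence along a subsequence, and the discrete norm is then a Riemann sum whose limit is $\|\psi\|_{L^2}^2$, so $\psi\neq 0$ and the continuous variational principle applies. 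What each approach buys: the paper's argument is more elementary (continuity of $K$ suffices, no Sobolev compactness) and sits naturally within the Nystr\"{o}m/kernel framework that the paper then pushes further (Remark~\ref{remnystrom}, Lemma~\ref{lemconvergence}) to treat all eigenvalues; your argument bypasses the kernel entirely and exploits the spline--DBO connection that is the paper's central theme, making the first-eigenvalue convergence a direct consequence of \eqref{delta4} and the minimizing definition of $s_\fu$, though it does not obviously extend beyond the bottom of the spectrum without a min-max elaboration. Your closing observation that Proposition~\ref{propdistLambdah} is only one-sided -- it places some discrete eigenvalue near $\lambda_1^{-1}$ but cannot prevent $\lambda_{h,1}^{-1}$ from overshooting -- agrees exactly with the remark the paper makes after that proposition.
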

      \begin{proof}
      We prove in fact that
      \be\label{eqdeflambda01}\lim\limits_{h\to 0}\lambda_{h,1}^{-1}=\lambda_1^{-1}.\ee

      We first prove  that
     \be\label{eqliminflambdah} \liminf\limits_{h\to 0}\lambda_{h,1}^{-1}\geq \lambda_1^{-1}.\ee

       Given $\eps>0,$ it suffices to prove that there exists $h_0>0$ so that for any $0<h<h_0,$
      \be\label{eqcomparelambda}
        \lambda_{h,1}^{-1}\geq\lambda_1^{-1}-\eps.
         \ee
         Since $\lambda_1^{-1}$ is the greatest eigenvalue of the kernel $K,$ we have
            \be\label{eqlambdamaxcont}\lambda_1^{-1}=\max\limits_{\|u\|_{L^2(0,1)}=1}\int_0^1\int_0^1K(x,y)u(x)u(y)dxdy.\ee

            Remark that (see the proof of Proposition ~\ref{propdistLambdah}) the maximum is attained by $\phi_1,$  the normalized eigenfunction corresponding to $\lambda_1.$ However we shall need an approximating compactly supported function.

            Now let $ u^\eps\in C^\infty_0(0,1) $ be a normalized function , $\|u^\eps\|_{L^2(0,1)}=1$ and such that
            \be\label{eqestimlambda1}\lambda_1^{-1}-\eps\leq\int_0^1\int_0^1K(x,y)u^\eps(x)u^\eps(y)dxdy.\ee
            Take $h_0>0$ sufficiently small, so that $u^\eps$ vanishes in a neighborhood of the  ``edge'' intervals $[0,h_0]\cup
            [1-h_0,1].$

            Let $h=\frac1N<h_0.$

            For simplicity, we denote by $\set{x_j=jh,\,0\leq j\leq N}$ the grid points , omitting the obvious dependence on $h.$

            Define a nonnegative step function
            $$U^\eps(x)^2=\frac{1}{h}\int_{x_j-\frac{h}{2}}^{x_j+\frac{h}{2}}u^\eps(x)^2dx,\quad
            x\in\Big(x_j-\frac{h}{2},x_j+\frac{h}{2}\Big),\,\,j=1,2,\ldots,N-1.$$
            Clearly $\|U^\eps\|_{L^2(0,1)}=1.$

            The continuity of $K(x,y)$ implies that (decreasing $h_0$ if necessary)
            \be\label{eqdiscreteK}
            \int_0^1\int_0^1K(x,y)u^\eps(x)u^\eps(y)dxdy\leq h^2\suml_{i,j=1}^{N-1}K(x_i,x_j)U^\eps(x_i)U^\eps(x_j)+\eps.
            \ee

            Let $\fu^\eps=(\fu^\eps_1,\ldots,\fu^\eps_{N-1})\in l^2_{h,0}$ be the grid function defined by
            $$\fu^\eps_j=U^\eps(x_j),\quad j=1,2,\ldots,N-1,$$
            so that $|\fu^\eps|_{h}=1.$

            Employing the notation ~\eqref{eqKheqK}, the  inequality ~\eqref{eqdiscreteK} can be rewritten as
            \be \label{eqdiscreteKueps}\int_0^1\int_0^1K(x,y)u^\eps(x)u^\eps(y)dxdy\leq h\suml_{i,j=1}^{N-1}K^{h}(x_i,x_j)\fu^\eps_i\fu^\eps_j+\eps=
            (K^{h}\fu^\eps,\fu^\eps)_{h}+\eps.
            \ee

            From the maximum principle
                        (see the notation introduced in Corollary ~\ref{cordiscresolvent}),
           \be\label{eqlambdahinv} \lambda_{h,1}^{-1}=\max\limits_{|\fu|_{h,0}=1}((\delta_x^4)^{-1}\fu,\fu)_h=
            h\max\limits_{|\fu|_{h,0}=1}\suml_{i,j=1}^{N-1}K^h_{i,j}\fu_i\fu_j.\ee
             we infer that
            \be\label{eqestimKdislambdah}
            h\suml_{i,j=1}^{N-1}K^{h}(x_i,x_j)\fu^\eps_i\fu^\eps_j\leq \lambda_{h,1}^{-1}.
            \ee
            Combining ~\eqref{eqestimlambda1},~\eqref{eqdiscreteKueps} and ~\eqref{eqestimKdislambdah} we obtain
            \be \lambda_1^{-1}\leq \lambda_{h,1}^{-1}+2\eps.
            \ee
            The estimate ~\eqref{eqliminflambdah} is therefore established.

            We now proceed to establish the reverse inequality
            \be\label{eqlimsuplambdah} \limsup\limits_{h\to 0}\lambda_{h,1}^{-1}\leq \lambda_1^{-1}.\ee
            Given $\eps>0,$ it suffices to prove that there exists $h_0>0$ so that for any $0<h<h_0,$
      \be\label{eqcomparelambdasup}
        \lambda_{h,1}^{-1}\leq\lambda_1^{-1}+\eps.
         \ee
         Let $\fu^h\in l^2_{h,0},\,\,|\fu^h|_{h,0}=1,$ be an eigenvector corresponding to $\lambda_{h,1}$, so that
         \be\lambda_{h,1}^{-1}=
            h\suml_{i,j=1}^{N-1}K^h_{i,j}\fu^h_i\fu^h_j.\ee
            Since the kernel $K^h$ is positive, we can assume that $\fu^h_i\geq 0,\,\,0\leq i\leq N.$

            Let $u^h(x)$ be the nonnegative piecewise constant function defined by
            %\be
%            u^\eps(x)^2=\frac{x_{i+1}-x}{x_{i+1}-x_{i}}(\fu^\eps_i)^2+\frac{x-x_i}{x_{i+1}-x_{i}}(\fu^\eps_{i+1})^2,\quad x_i\leq x\leq x_{i+1},\,\,i=0,1,\ldots,N-1.
%            \ee
\be
            u^h(x)=\fu^h_i,\quad x_i-\frac{h}{2}\leq x\leq x_{i}+\frac{h}{2},\,\,i=0,1,\ldots,N.
      \ee

            Clearly $\|u^h\|_{L^2(0,1)}=1$ so in view of ~\eqref{eqlambdamaxcont}
            \be\label{eqlambdamin1ueps} \lambda_1^{-1}\geq \int_0^1\int_0^1K(x,y)u^h(x)u^h(y)dxdy.
            \ee
            We now replace the kernel $K(x,y)$ by the piecewise constant kernel
            \be\label{eqDefineKh}
               K_h(x,y)=K(x_i,y_j),\quad x\in \Big(x_i-\frac{h}{2},x_i+\frac{h}{2}\Big),\,\,y\in \Big(y_j-\frac{h}{2},y_j+\frac{h}{2}\Big),\,\,0\leq i,j\leq N.
            \ee
             By increasing $N$ if needed, the continuity of $K(x,y)$ implies that
             $$\int_0^1\int_0^1 |K(x,y)-K_h(x,y)|^2dxdy\leq \eps^2,$$
            so that, by the Cauchy-Schwarz inequality,
            \be\label{eqcomparekKtild}
            \Big|\int_0^1\int_0^1K(x,y)u^h(x)u^h(y)dxdy -\int_0^1\int_0^1K_h(x,y)u^h(x)u^h(y)dxdy\Big|\leq \eps.
            \ee
            Observe that when changing $N$ we must also change  $\fu^h$ (hence $u^h$), but since they are normalized this change does not affect the above estimate.

            Combining ~\eqref{eqlambdamin1ueps} and ~\eqref{eqcomparekKtild} we obtain
            \be
            \lambda_1^{-1}\geq \int_0^1\int_0^1K_h(x,y)u^h(x)u^h(y)dxdy-\eps.
            \ee
            Now
            \be\aligned
            \int_0^1\int_0^1K_h(x,y)u^h(x)u^h(y)dxdy=\suml_{i,j=0}^N K(x_i,y_j)\int_{x_i-\frac{h}{2}}^{x_i+\frac{h}{2}}u^h(x)dx\cdot
            \int_{y_j-\frac{h}{2}}^{y_j+\frac{h}{2}}u^h(y)dy\\
            =h^2\suml_{i,j=0}^N K(x_i,y_j)\fu^h_i\fu^h_j= h\suml_{i,j=1}^{N-1}K^h_{i,j}\fu^h_i\fu^h_j=\lambda_{h,1}^{-1}.
            \endaligned\ee
            Thus ~\eqref{eqcomparelambdasup} is established and the proof is complete.

      \end{proof}
          Theorem ~\ref{thmconvergelambda} does not give any convergence rate for the difference $|\lambda_1-\lambda_{h,1}|.$  In what follows we consider this issue, using the basic variational tools.

          We begin with a more general discussion.

       Pick $\phi\in  \set{\phi_1,...,\phi_k...}$ a normalized eigenfunction of $\Lcal,$ with associated eigenvalue $\lambda\in \set{0<\lambda_1<\lambda_2<\ldots<\lambda_k<\ldots}.$

       Applying the operator $\Lcal$ to

      $$\Lcal\phi=\lambda\phi,\quad \lambda\in \set{0<\lambda_1<...<\lambda_k...}..,$$

       we get

       $$\frac{d^8}{dx^8}\phi=\lambda^2\phi.$$

       Since $\phi$ is normalized, we have
       \be \label{derphibd}\|\frac{d^8}{dx^8}\phi\|_{L^2[0,1]}=\lambda^2,
       \ee
       and continuing in this fashion we see that all derivatives of $\phi$ are bounded by some power of $\lambda,$
       and therefore in the estimates below we have a generic constant $C>0$ depending only on $\lambda.$

            Let $\phi^*$ be
           the corresponding grid function, $\phi^*(x_i)=\phi(x_i),\quad 0\leq i\leq N.$

         Let $\fv\in l^2_{h,0}$ satisfy
         $$\delta_x^4\fv=\lambda\phi^*,$$
         where also $\fv_x\in l^2_{h,0}.$

         By the fourth order accuracy ~\eqref{eqoptimalerror}  we know

         \be\label{vphi}|\fv-\phi^*|_\infty\leq Ch^4,\ee
         where $C$ is independent of $N=h^{-1},$ but depends of course on $\phi.$

         It follows that
          \be\label{vphi1}\delta_x^4\fv=\lambda\fv+\fw ,\,\,|\fw|_h\leq Ch^4.\ee
      %     If we normalize $\|\phi\|_2=1,$ then the elliptic regularity yields control
       %of $\phi$ in $H^4[0,1],$ hence ontrol %of supremum of derivatives $\phi^{(j)},\,\,j\leq 3.$ In particular, $\|\phi^*\|_h$ is controlled, hence also %$\|\fv\|_h.$  We can then take
        %$h\to 0,$ and obtain $\lambda$ as limit of eigenvalues of $\delta_x^4.$

          Since $\phi$ is normalized, the truncation error for the trapezoid integration gives
          \be |\phi^*|_h^2=h\suml_{i=1}^{N-1}[\phi^*_i]^2=\|\phi\|_{L^2[0,1]}^2+O(h^2)=1+O(h^2),
          \ee
          hence also
          \be
          |1- |\fv|_h^2|\leq Ch^2.
          \ee
           Let $\bar{\fv}=\frac{\fv}{|\fv|_h},$ then it follows from ~\eqref{vphi1}

        \be\label{vphi2}\delta_x^4\bar{\fv}=\lambda\bar{\fv}+\bar{\fw} ,\,\,|\bar{\fw}|_h\leq Ch^4.\ee

         Regarding the first eigenvalue, we can now show that $\lambda_{h,1}$ can exceed $\lambda_1$ by at most $O(h^4).$
        \begin{claim}\label{claimuplimith4} Let $\lambda_1$ be the first eigenvalue of $\Lcal$ ( by ~\eqref{allbeta}, $\lambda_1=\beta_0^4$).
             Then there exists a constant $C>0,$ depending on the eigenfunction $\phi_1,$ but not on $h,$ such that
              \be\label{lambdamu1} \lambda_{h,1}\leq\lambda_1+Ch^4.\ee
           \end{claim}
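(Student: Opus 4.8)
The plan is to exploit the normalized approximate eigenvector $\bar{\fv}$ already constructed in \eqref{vphi2} together with the variational (Rayleigh quotient) characterization of the \emph{smallest} eigenvalue of the discrete biharmonic operator. Since $\delta_x^4$ is symmetric and positive on $l^2_{h,0}$ (as recalled above, following \cite[Lemmas 10.9, 10.10]{book}), its first eigenvalue admits the min characterization
\begin{equation}
\lambda_{h,1}=\min_{\substack{\fu\in l^2_{h,0}\\ |\fu|_h=1}}(\delta_x^4\fu,\fu)_h .
\end{equation}
Consequently every normalized trial vector furnishes an \emph{upper} bound for $\lambda_{h,1}$, which is exactly the direction required by \eqref{lambdamu1}.

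First I would take $\phi=\phi_1$ as the normalized first eigenfunction, so that the constant $\lambda$ appearing in \eqref{vphi2} equals $\lambda_1$, and I would use $\bar{\fv}$ as the trial vector. Since $|\bar{\fv}|_h=1$ by construction, the Rayleigh quotient estimate gives $\lambda_{h,1}\leq(\delta_x^4\bar{\fv},\bar{\fv})_h$. Substituting the relation \eqref{vphi2}, namely $\delta_x^4\bar{\fv}=\lambda_1\bar{\fv}+\bar{\fw}$, and using bilinearity of the discrete scalar product together with $|\bar{\fv}|_h^2=1$, I obtain
\begin{equation}
(\delta_x^4\bar{\fv},\bar{\fv})_h=\lambda_1+(\bar{\fw},\bar{\fv})_h .
\end{equation}

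It then remains only to control the cross term. By the Cauchy--Schwarz inequality for $(\cdot,\cdot)_h$ and the bound $|\bar{\fw}|_h\leq Ch^4$ from \eqref{vphi2},
\begin{equation}
|(\bar{\fw},\bar{\fv})_h|\leq|\bar{\fw}|_h\,|\bar{\fv}|_h\leq Ch^4 ,
\end{equation}
which combined with the two previous displays yields $\lambda_{h,1}\leq\lambda_1+Ch^4$, as claimed.

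I do not foresee any genuine obstacle here: the entire difficulty has already been absorbed into producing the high-accuracy approximate eigenpair, with residual $\bar{\fw}$ of size $O(h^4)$, which itself rests on the optimal fourth-order convergence \eqref{eqoptimalerror} of $(\delta_x^4)^{-1}$ and on the trapezoidal normalization estimate. The only point meriting a word of care is that the constant $C$ depends on $\phi_1$ (through its derivatives, via \eqref{derphibd}) but not on $h$, so that the estimate is uniform in $N=h^{-1}$; and that the one-sided Rayleigh bound gives only the upper estimate $\lambda_{h,1}-\lambda_1\leq Ch^4$, the matching lower bound being the object of the subsequent arguments.
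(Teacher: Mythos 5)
Your proposal is correct and follows essentially the same route as the paper: the paper's proof likewise takes $\lambda=\lambda_1$ in \eqref{vphi2} and applies the variational minimum principle $\lambda_{h,1}=\min_{|\fz|_h=1}(\delta_x^4\fz,\fz)_h$ to the trial vector $\bar{\fv}$, obtaining $\lambda_{h,1}\leq(\delta_x^4\bar{\fv},\bar{\fv})_h\leq\lambda_1+Ch^4$. Your only addition is to make explicit the Cauchy--Schwarz control of the cross term $(\bar{\fw},\bar{\fv})_h$, which the paper leaves implicit.
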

           \begin{proof}
           Consider ~\eqref{vphi2} with $\lambda=\lambda_1.$ By the variational minimum principle for the first eigenvalue
           we know that
           $$\lambda_{h,1}=\min\limits_{|\fz|_h=1}(\delta_x^4\fz,\fz)_h,$$
           hence
           \be\label{eqestimatefirstev}
           \lambda_{h,1}\leq (\delta_x^4\bar{\fv},\bar{\fv})_h\leq \lambda_1+Ch^4,
           \ee
            which proves the claim.

             \end{proof}

             \begin{rem}  The exact first eigenvalue is $\lambda_1= 500.5639017404. $ Numerical calculations actually show that $\lambda_{h,1}\leq\lambda_1,$ and that $\lambda_{h,1}$ increases as $h$ decreases. This is shown in Figure ~\ref{figfirstevhaggai} . We are still  unable to prove this monotonicity.

             \end{rem}
             \begin{rem}\label{remestfirstev}
             Observe that in Claim ~\ref{claimuplimith4} we do not have a corresponding lower limit, namely, that $\lambda_{h,1}$ is above  $\lambda_1-O(h^4).$  This is evident in the numerical results displayed in Figure
             ~\ref{figfirsteverrhaggai}. The proof of this fact is postponed to Theorem ~\ref{thmoptimalallevs} below, where we show that the convergence of all discrete eigenvalues
             to the corresponding continuous ones is ``optimal'', namely, at an $O(h^4)$ rate.
             \end{rem}
             \begin{figure}[htb]
\centering
\includegraphics[width=150pt,height=150pt]{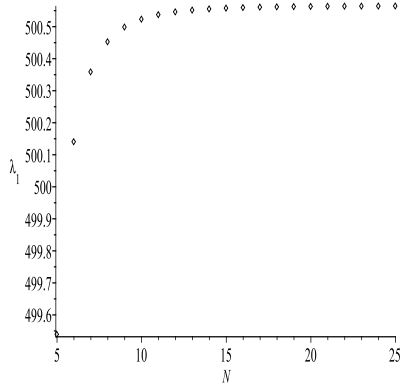}

\caption{First discrete eigenvalue as a function of the number of grid points in $[0,1].$
}
\label{figfirstevhaggai}
\end{figure}

\begin{figure}[htb]
\centering
\includegraphics[width=150pt,height=150pt]{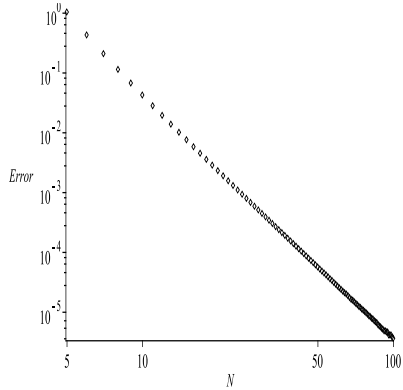}

\caption{Log-log graph of the error of first discrete eigenvalue $\lambda_1-\lambda_{h,1}$ as function of the number $N$ of grid points in $[0,1].$ The slope is -4, indicating a convergence rate $O(N^{-4})=O(h^4).$
}
\label{figfirsteverrhaggai}
\end{figure}
 %             \begin{figure}[htb]
%\centering
%\includegraphics[width=150pt,height=150pt]{katzir-1st_ev.eps}

%\caption{First eigenvalue as function of the number of grid points in $[0,1].$
%}
%\label{figfirstev}
%\end{figure}

             \newpage

           \subsection{\textbf{CONVERGENCE OF THE  DISCRETE EIGENVALUES $\lambda_{h,k},\,\,k>1$}}
           $$ $$
           We now consider the convergence of all discrete eigenvalues to their continuous counterparts.

Numerical simulations indicate that, if we \textit{fix an index $k,$ } then
                $$  |\lambda_{k}-\lambda_{h,k}|\leq Ch^4,\quad \mbox{as} \,\,h\to 0, $$
                with $C>0$ depending on $k.$ This is demonstrated in Figure ~\ref{figevs16} (for $N=16$) and  Figure ~\ref{figevs64}  (for $N=64$). We thank Jean-Pierre Croisille for both figures. Thus, even  a very coarse resolution produces excellent approximation of the eigenvalues.

           % \newpage
             \begin{figure}[htb]
\centering
\includegraphics[width=150pt,height=150pt]{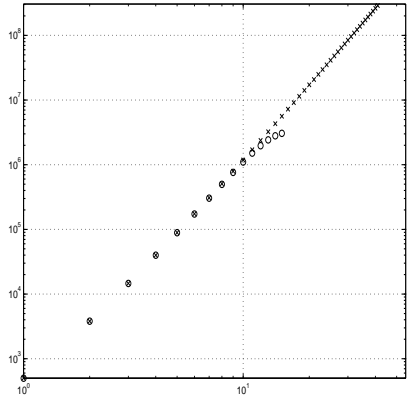}
\caption{Log-log graph of eigenvalues: Horizontal-$\log k.$ Vertical --$\log \lambda_k \,(\times),\,\log \lambda_{h,k}\,(\circ),\,h=\frac{1}{N}=\frac{1}{16}$
.}
\label{figevs16}
\end{figure}
               \begin{figure}[htb]
\centering
\includegraphics[width=150pt,height=150pt]{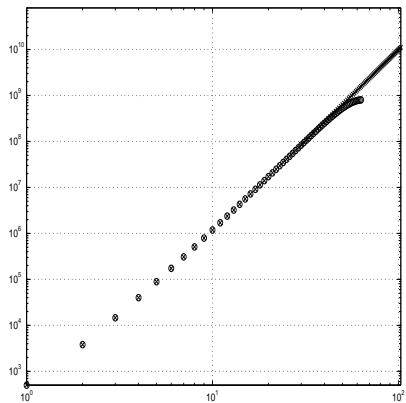}

\caption{Log-log graph of eigenvalues: Horizontal-$\log k.$ Vertical --$\log \lambda_k \,(\times),\,\log \lambda_{h,k}\,(\circ),\,h=\frac{1}{N}=\frac{1}{64}$
.}
\label{figevs64}
\end{figure}

    The convergence result in Theorem ~\ref{thmconvergelambda}, that dealt with the first eigenvalue, did not yield an ``optimal'' convergence rate, as noted in Remark ~\ref{remestfirstev}.

      Using a very different approach, we shall now extend the convergence to all eigenvalues, and, furthermore, obtain the optimal $O(h^4)$ convergence rate.

       Let $K_h(x,y)$ be the piecewise constant (positive definite) kernel introduced in ~\eqref{eqDefineKh}. We denote by $\Lcal_h^{-1}$ the operator (on $L^2[0,1]$) whose kernel is $K_h.$ Clearly this operator is compact and positive definite. In fact, the following claim asserts that it has only finitely many positive eigenvalues (depending on $h,$ of course).
       \begin{claim} The set of eigenvalues of $\Lcal_h^{-1}$ is  the finite set $\Lambda_h^{-1},$ defined in ~\eqref{eqLmbdahmin1}.
       \end{claim}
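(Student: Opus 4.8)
The plan is to exploit the fact that $K_h$ is a \emph{degenerate} (finite-rank) kernel and to reduce the eigenvalue problem for $\Lcal_h^{-1}$ on $L^2[0,1]$ to the matrix eigenvalue problem that has already been identified. For $1\le i\le N-1$ let $\chi_i$ denote the indicator function of the cell $\big(x_i-\tfrac h2,x_i+\tfrac h2\big)$, each of which has length $h$ and lies in $(0,1)$. First I would observe that, because the continuous kernel $K(x,y)$ of \eqref{eqkernelcont} vanishes whenever either argument is an endpoint (directly from the formula: the factors $(1-x)^2,\ x^2,\ y^2,\ (1-y)^2$ kill the boundary), the terms with $i$ or $j$ in $\set{0,N}$ drop out of the definition \eqref{eqDefineKh}, so that
\be
K_h(x,y)=\suml_{i,j=1}^{N-1}K(x_i,x_j)\,\chi_i(x)\chi_j(y).
\ee
Thus $K_h$ has rank at most $N-1$, and the range of $\Lcal_h^{-1}$ is contained in $\mathrm{span}\set{\chi_1,\dots,\chi_{N-1}}$.

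Next I would analyze the eigenvalue equation $\Lcal_h^{-1}g=\mu g$ with $\mu\ne 0$. Since $g=\mu^{-1}\Lcal_h^{-1}g$ lies in the range, we may write $g=\suml_{i=1}^{N-1}a_i\chi_i$; because the cells are disjoint and of length $h$, we have $\int_0^1\chi_j(y)g(y)\,dy=h\,a_j$. Substituting into $\Lcal_h^{-1}g=\int_0^1 K_h(\cdot,y)g(y)\,dy$ and matching coefficients of the (linearly independent) $\chi_i$ gives
\be
\mu\,a_i=\suml_{j=1}^{N-1}hK(x_i,x_j)\,a_j=\suml_{j=1}^{N-1}K^h_{i,j}\,a_j,\quad 1\le i\le N-1,
\ee
i.e. $\mu$ is an eigenvalue of the matrix $K^h=(K^h_{i,j})$ with eigenvector $(a_i)$. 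The converse is immediate: given $K^h(a_i)=\mu(a_i)$, the function $g=\suml_{i} a_i\chi_i$ satisfies $\Lcal_h^{-1}g=\mu g$ by the same computation. Hence the nonzero eigenvalues of $\Lcal_h^{-1}$ coincide, with multiplicities, with the eigenvalues of $K^h$.

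Finally I would invoke Corollary \ref{cordiscresolvent}, which identifies $K^h$ with the matrix representing $(\delta_x^4)^{-1}$ on $l^2_{h,0}$; its eigenvalues are precisely the positive reciprocals $\Lambda_h^{-1}$ of \eqref{eqLmbdahmin1}. Combining, the positive spectrum of $\Lcal_h^{-1}$ is exactly $\Lambda_h^{-1}$, which is the assertion of the claim. On the infinite-dimensional space $L^2[0,1]$ the operator also has $0$ as an eigenvalue of infinite multiplicity, arising from all functions whose cell-averages over the interior cells vanish; the claim refers to the positive spectrum, consistent with the preceding remark that $\Lcal_h^{-1}$ has only finitely many positive eigenvalues.

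The computation is essentially bookkeeping, and the one point I expect to require care — the only real obstacle — is the boundary accounting: verifying that the endpoint half-cells (indices $i$ or $j$ equal to $0$ or $N$) contribute nothing, so that the relevant matrix is exactly the interior $(N-1)\times(N-1)$ block $K^h$ with the clean uniform normalization $\int_0^1\chi_j=h$. This is precisely what makes the scaling match \eqref{eqKheqK} rather than producing a spurious factor from the shorter edge cells.
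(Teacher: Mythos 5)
Your proposal is correct and follows essentially the same route as the paper: reduce the eigenvalue problem for the degenerate kernel $K_h$ to the matrix eigenproblem for $K^h$ and invoke Corollary~\ref{cordiscresolvent}, the paper doing this by observing that any eigenfunction with $\mu\neq 0$ is piecewise constant and evaluating at the grid points. Your write-up is somewhat more careful than the paper's on two points it leaves implicit --- the converse inclusion (every $\lambda_{h,k}^{-1}$ actually occurs, with multiplicity) and the vanishing of $K$ on the edge half-cells, which the paper absorbs into the remark that $u(x_0)=u(x_N)=0$ are included --- but these are refinements of the same argument, not a different one.
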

       \begin{proof}
        Let $u\in L^2[0,1]$ be an eigenfunction of $\Lcal_h^{-1}.$ Thus, for some $\mu>0,$
        $$\mu u(x)=\int\limits_0^1K_h(x,y)u(y)dy,\quad x\in [0,1].$$
        In particular, $u$ is piecewise constant
         $$u(x)=u(x_i),\quad x\in \Big(x_i-\frac{h}{2},x_i+\frac{h}{2}\Big),\,\,i=0,1,\ldots,i=N.$$
         hence (with $K^h$ as in Corollary ~\ref{cordiscresolvent})
         \be
          \mu u(x_i)=\suml_{j=0}^N K^h_{i,j}u(x_j),\quad 0\leq i\leq N,
         \ee
         where the boundary values $u(x_0)=u(x_N)=0$ are included.

         Thus $\mu$ is an eigenvalue of $(\delta_x^4)^{-1},$ hence $\mu=\lambda_{h,k}^{-1}$ for some $1\leq k\leq N-1.$
       \end{proof}
            We now proceed to establish the convergence of all discrete eigenvalues to the corresponding continuous ones. In fact, the following lemma is a special case of a theorem of Markus ~\cite[Corollary 5.3]{markus} concerning differences of eigenvalues  of self-adjoint operators. A similar general theorem was proved (much later) by Kato~\cite{kato}. However the generality of Kato's theorem required an ``extended enumeration'' of the eigenvalues, adding values of boundary points of the essential spectra.

             For the convenience of the reader we provide here a simple proof of the lemma, following  the proof of (the finite-dimensional) Theorem 6.11 in ~\cite[Section II.6]{kato-book}.

             \begin{lem}\label{lemconvergence} Let $h=\frac{1}{N}, $ and let
             $$\Lambda^{-1}=\set{\lambda_{1}^{-1}>\lambda_{2}^{-1}>\ldots>\lambda_{k}^{-1}\ldots >0} , $$

                     $$\Lambda_h^{-1}=
                     \set{\lambda_{h,1}^{-1}\geq\lambda_{h,2}^{-1}\geq\ldots\geq\lambda_{h,N-1}^{-1}>0} ,$$
                     be the sets introduced in ~\eqref{eqLmbdamin1},~\eqref{eqLmbdahmin1}, respectively.

                     Then there exists a constant $C>0,$ independent of $h,$ so that
                     \be\label{eqthmsquaredifference}
                     \suml_{k=1}^{N-1}|\lambda_{k}^{-1}-\lambda_{h,k}^{-1}|^2+\suml_{k=N}^\infty\lambda_{k}^{-2}\leq\int_0^1\int_0^1 |K(x,y)-K_h(x,y)|^2dxdy\leq Ch^2.
                     \ee

             \end{lem}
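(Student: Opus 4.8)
The plan is to recast the statement entirely in operator-theoretic terms. Write $A=\Lcal^{-1}$, the integral operator on $L^2(0,1)$ with kernel $K$, and $B=\Lcal_h^{-1}$, the integral operator with kernel $K_h$; both are self-adjoint, positive and Hilbert--Schmidt. Denote by $\mu_k=\lambda_k^{-1}$ the eigenvalues of $A$ (strictly decreasing to $0$, with a complete orthonormal system of eigenfunctions $\{e_k\}$, since $\Lcal$ has compact resolvent and trivial kernel), and set $\nu_k=\lambda_{h,k}^{-1}$ for $1\le k\le N-1$, completing with $\nu_k=0$ for $k\ge N$, so that $B$ too has a complete orthonormal eigensystem $\{f_j\}$ (the $f_j$ with $j\ge N$ spanning $\ker B$). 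With this bookkeeping the left-hand sum in the statement is exactly $\sum_{k\ge1}(\mu_k-\nu_k)^2$, while the middle quantity is $\|A-B\|_{HS}^2=\int_0^1\int_0^1|K-K_h|^2$. Thus the first inequality is the Hilbert--Schmidt form of the Hoffman--Wielandt inequality, and the second is a quadrature-type estimate.

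I would dispose of the right-hand inequality first, since it is elementary. From the explicit formula \eqref{eqkernelcont}, $K$ is piecewise polynomial, continuous, and has uniformly bounded first derivatives on $[0,1]^2$, hence is Lipschitz with some constant $L$. Since $K_h$ is the piecewise-constant sampling $K_h(x,y)=K(x_i,y_j)$ on cells of diameter $O(h)$, one has the pointwise bound $|K(x,y)-K_h(x,y)|\le L(|x-x_i|+|y-y_j|)\le Ch$ on each cell, and integrating the square over $[0,1]^2$ yields $\int_0^1\int_0^1|K-K_h|^2\le Ch^2$.

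The substance is the left-hand inequality. Expanding the Hilbert--Schmidt norm via traces gives
\[
\|A-B\|_{HS}^2=\mathrm{Tr}(A^2)-2\,\mathrm{Tr}(AB)+\mathrm{Tr}(B^2)=\sum_k\mu_k^2+\sum_j\nu_j^2-2\sum_{k,j}\mu_k\nu_j\,s_{kj},
\]
where $s_{kj}=|\langle e_k,f_j\rangle|^2$. Because $\{e_k\}$ and $\{f_j\}$ are complete orthonormal systems, Parseval's identity shows that the (infinite) matrix $S=(s_{kj})$ is doubly stochastic: $\sum_k s_{kj}=\sum_j s_{kj}=1$. Since $\sum_j\nu_j^2=\sum_k\nu_k^2$, comparing with $\sum_k(\mu_k-\nu_k)^2=\sum_k\mu_k^2+\sum_k\nu_k^2-2\sum_k\mu_k\nu_k$ reduces the entire inequality to the single rearrangement estimate
\[
\sum_{k,j}\mu_k\nu_j\,s_{kj}\le\sum_k\mu_k\nu_k.
\]

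This last estimate I would prove by Abel summation, which is what replaces Birkhoff's theorem in the infinite-dimensional setting (the main technical obstacle, together with justifying the trace manipulations). Writing $\mu_k=\sum_{p\ge k}(\mu_p-\mu_{p+1})$ and $\nu_j=\sum_{q\ge j}(\nu_q-\nu_{q+1})$ as sums of nonnegative increments, the left side becomes $\sum_{p,q}(\mu_p-\mu_{p+1})(\nu_q-\nu_{q+1})\big(\sum_{k\le p,\,j\le q}s_{kj}\big)$. The block sum of the doubly stochastic $S$ over the first $p$ rows and $q$ columns is at most $\min(p,q)$ (each full row sums to $1$), whereas the identical rearrangement of $\sum_k\mu_k\nu_k$ yields the same double sum with $\min(p,q)$ in place of the block sum; since all increments are nonnegative, the inequality follows term by term. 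All series involved converge absolutely because $A,B$ are Hilbert--Schmidt, so that $\sum\mu_k^2,\sum\nu_k^2<\infty$ and, by Cauchy--Schwarz, $\sum\mu_k\nu_k<\infty$, which legitimizes the interchange of summations. I expect the delicate point to be precisely this passage from the finite-dimensional rearrangement inequality to its infinite analogue, which the Abel-summation identity above handles cleanly.
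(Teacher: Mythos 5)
Your proof is correct, but it takes a genuinely different route from the paper's. The paper adapts Kato's finite-dimensional argument (Theorem II.6.11 of his book): it joins the two operators by the segment $\Lcal^{-1}_{t,h}=(1-t)\Lcal^{-1}+t\Lcal_h^{-1}$, invokes analytic perturbation theory to get piecewise-analytic eigenvalue branches $\mu_k^{-1}(t)$, differentiates the eigenvalue equation to obtain $\frac{d}{dt}\mu_k^{-1}(t)=\big((\Lcal_h^{-1}-\Lcal^{-1})\phi_k(\cdot;t),\phi_k(\cdot;t)\big)$, integrates in $t$, and expands in the eigenbasis $\{\chi_j\}$ of $\Acal=\Lcal_h^{-1}-\Lcal^{-1}$, thereby writing each difference $\lambda_{h,k}^{-1}-\lambda_k^{-1}$ (resp.\ $-\lambda_k^{-1}$ for $k\geq N$) as $\sum_j\sigma_{j,k}\gamma_j$ with $(\sigma_{j,k})$ doubly substochastic; Jensen's inequality with $\Phi(\xi)=\xi^2$ then gives the bound by $\sum_j\gamma_j^2=\|\Acal\|_{HS}^2$. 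You instead give the classical direct proof of the Hoffman--Wielandt inequality for positive compact operators: the trace expansion of $\|A-B\|_{HS}^2$, double stochasticity of the overlap matrix $s_{kj}=|\langle e_k,f_j\rangle|^2$ via Parseval, and the rearrangement bound $\sum_{k,j}\mu_k\nu_j s_{kj}\leq\sum_k\mu_k\nu_k$ by Abel summation against the block-sum bound $\min(p,q)$. Your route avoids the analytic perturbation machinery entirely---in infinite dimensions the least elementary ingredient of the paper's proof---at the price of using positivity and the decreasing enumeration essentially (nonnegative Abel increments and the telescoping $\mu_k=\sum_{p\geq k}(\mu_p-\mu_{p+1})$ require $\mu_p,\nu_p\downarrow 0$, which is also what legitimizes padding $\Lambda_h^{-1}$ with zeros); the paper's homotopy argument, by contrast, yields at one stroke the majorization-type estimate \eqref{eqPhigammaj} for every convex $\Phi$ with $\Phi(0)=0$, of which the lemma is the case $\Phi(\xi)=\xi^2$. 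Both treatments of the right-hand bound $\int_0^1\int_0^1|K-K_h|^2\,dx\,dy\leq Ch^2$ coincide (Lipschitz continuity of the explicit piecewise-polynomial kernel). Two details worth tightening in your write-up: the interchanges in the Abel step are most cleanly justified by Tonelli (all terms are nonnegative) together with the finiteness of $\mathrm{Tr}(AB)\leq\|A\|_{HS}\|B\|_{HS}$, rather than by ``absolute convergence'' of unspecified series; and you should note explicitly that $\{f_j\}_{j\geq N}$ can be chosen as an orthonormal basis of the infinite-dimensional kernel of $\Lcal_h^{-1}$, which is precisely what makes $S$ doubly stochastic rather than merely substochastic.
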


              \begin{proof}
                Note that both $\Lcal^{-1},\,\Lcal_h^{-1},$ are Hilbert-Schmidt (hence compact) positive  operators.

                For $t\in[0,1]$ let
                $$\Lcal^{-1}_{t,h}=(1-t)\Lcal^{-1}+t\Lcal_h^{-1},$$
                which is also compact, positive self-adjoint operator. In particular, its spectrum (apart from $0$) consists of a descending sequence of  positive eigenvalues
                $$\set{\mu_{1}^{-1}(t)\geq\mu_{2}^{-1}(t)\geq\ldots\geq\mu_{N-1}^{-1}(t)\geq\mu_{N}^{-1}(t) \geq\ldots\mu_{N+p}^{-1}(t)\geq\ldots>0 },\,0\leq t\leq 1.$$
                In view of the discussion in ~\cite[Chapter VII.3.2]{kato-book} the functions $\mu_{k}^{-1}(t),\,1\leq k<\infty,$ are  continuous, piecewise analytic functions of $t,$ and satisfy
                \be\label{eqmuk0}\mu_{k}^{-1}(0)=\lambda_{k}^{-1},\,1\leq k<\infty,\ee
                and
                \be\label{eqmuk1}\mu_{k}^{-1}(1)=\begin{cases}\lambda_{h,k}^{-1},\,1\leq k<N,\\
                0,\,k\geq N.\end{cases}\ee

                In addition, there exists (for every fixed $t\in[0,1]$) a corresponding set of orthonormal functions (in $L^2(0,1)$)
                $$\set{\phi_1(x;t),\phi_{2}(x;t),\ldots,\phi_{N}(x;t),\ldots,\phi_k(x;t),\ldots},\,\,0\leq t\leq 1.$$
                Pick an index $k\geq 1.$ The eigenvalue $\mu_{k}^{-1}(t)$ is continuous (in $t\in [0,1]$) and piecewise analytic, with finitely many singularities. The associated eigenfunction
                $\phi_k(x;t)$ is piecewise analytic in $t,$  with the same (finitely many) singularities. Thus, the equation
                \be\label{eqeigenLht}
                \Big[(1-t)\Lcal^{-1}+t\Lcal_h^{-1}-\mu_{k}^{-1}(t)\Big]\phi_k(x;t)=0
                \ee
                can be differentiated with respect to $t$ (excluding the singularities) and we obtain
                \be\label{eqeigenLhtdif}
                \Big[\Lcal_h^{-1}-\Lcal^{-1}-\partt\mu_{k}^{-1}(t)\Big]\phi_k(x;t)+\Big[(1-t)\Lcal^{-1}+t\Lcal_h^{-1}-\mu_{k}^{-1}(t)\Big]\partt\phi_k(x;t)=0.
                \ee
                Taking the scalar product with $\phi_k(x;t)$ we conclude that
                \be\label{eqderivmukt}
                \partt\mu_{k}^{-1}(t)=\Big((\Lcal_h^{-1}-\Lcal^{-1})\phi_k(x;t),\phi_k(x;t)\Big)_{L^2(0,1)},\quad t\in[0,1].
                \ee
                Integrating this equation and taking ~\eqref{eqmuk0} and ~\eqref{eqmuk1} into account we get
                \be\label{eqderivmuktint}
               \int_0^1 \Big((\Lcal_h^{-1}-\Lcal^{-1})\phi_k(x;t),\phi_k(x;t)\Big)_{L^2(0,1)}dt=\begin{cases}\lambda_{h,k}^{-1}-\lambda_{k}^{-1},\,1\leq k<N,\\ \\
               -\lambda_{k}^{-1},\, k\geq N
                 .\end{cases}\ee

                The self-adjoint operator $\Acal=\Lcal_h^{-1}-\Lcal^{-1}$ is Hilbert-Schmidt, hence compact. Let $\set{\gamma_1,\gamma_2,\ldots}$ be the sequence of its non-zero eigenvalues (repeated according to multiplicity) with a corresponding orthonormal sequence of eigenfunctions $\set{\chi_1(x),\chi_2(x),\ldots}\subseteq L^2(0,1).$

                Since $\phi_k(x;t)=\suml_{j=1}^\infty (\phi_k(x;t),\chi_j(x))_{L^2(0,1)}\chi_j(x),$ Equation ~\eqref{eqderivmuktint} entails
               \be\label{eqgammajsigmajk} \suml_{j=1}^\infty \sigma_{j,k}\gamma_j=\begin{cases}\lambda_{h,k}^{-1}-\lambda_{k}^{-1},\,1\leq k<N,\\ \\
               -\lambda_{k}^{-1},\, k\geq N
                 ,\end{cases}\ee
                 where $\sigma_{j,k}=\int_0^1(\phi_k(x;t),\chi_j)_{L^2(0,1)}^2dt,\,\,1\leq j,k<\infty.$

                 By the orthonormality of the functions (in $x$)
                 $$ 0\leq \sigma_{j,k}\leq 1,\quad \suml_{j=1}^\infty\sigma_{j,k}\leq 1,\quad \suml_{k=1}^\infty\sigma_{j,k}\leq 1.
                 $$
                 Let $\Phi$ be a real convex function on the real line, with $\Phi(0)=0.$ From Jensen's inequality we get
                 $$\Phi\Big( \suml_{j=1}^\infty \sigma_{j,k}\gamma_j\Big)\leq \suml_{j=1}^\infty \sigma_{j,k}\Phi(\gamma_j),\quad k=1,2,\ldots,$$
                 and summation over $k$ yields
                 \be\label{eqPhigammaj}
                 \suml_{k=1}^\infty\Phi\Big( \suml_{j=1}^\infty \sigma_{j,k}\gamma_j\Big)\leq \suml_{j=1}^\infty \Phi(\gamma_j).
                 \ee
                 In particular , taking $\Phi(\xi)=\xi^2$ and noting ~\eqref{eqgammajsigmajk} we obtain
                 $$\suml_{k=1}^{N-1}|\lambda_{k}^{-1}-\lambda_{h,k}^{-1}|^2+\suml_{k=N}^\infty\lambda_{k}^{-2}\leq \suml_{j=1}^\infty\gamma_j^2.$$
                 The sum on the right-hand side is the square of the Hilbert-Schmidt norm of $\Acal,$ which is $\int_0^1\int_0^1 |K(x,y)-K_h(x,y)|^2dxdy,$ thus proving
                 ~\eqref{eqthmsquaredifference}.

            \end{proof}
           % \newpage
            \begin{rem}
            Note that we obtained in particular
            $$\suml_{k=1}^{N-1}|\lambda_{k}^{-1}-\lambda_{h,k}^{-1}|^2\leq Ch^2.$$
               This estimate is valid simultaneously for \textit{all $N-1$} eigenvalues.  Fixing an index $k,$ we get in particular
               \be\label{eqratelambdadiff}
                \frac{ |\lambda_{k}-\lambda_{h,k}|}{\lambda_{h,k}}\leq C \lambda_{k}h.
               \ee
               In view of Claim ~\ref{claimcontevs} we have $\lambda_k \approx k^4.$ Thus ~\eqref{eqratelambdadiff} yields only an $O(h)$ convergence.

%\pagebreak
%\vspace{500pt}

      However it is seen in Table ~\ref{table-11}, that even with a small number of grid points, the first discrete eigenvalues approximate very well the continuous ones.
      We shall prove below that indeed the convergence is ``optimal''.

       \begin{table}[]

	\centering
	
	\begin{tabular}{|l|l|l|l|l|}
		\hline
		& k=1        & k=2         & k=3          & k=4          \\ \hline
		\begin{tabular}[c]{@{}l@{}}true\\ eigenvalue\end{tabular} & 500.563902 & 3803.537080 & 14617.630131 & 39943.799006 \\ \hline
		N=10                                                      & 500.521885 & 3800.689969 & 14567.617771 & 39493.816015 \\ \hline
		N=20                                                      & 500.561614 & 3803.398598 & 14615.468848 & 39926.599754 \\ \hline
		N=30                                                      & 500.563462 & 3803.511145 & 14617.236978 & 39940.722654 \\ \hline
		N=40                                                      & 500.563764 & 3803.529031 & 14617.509451 & 39942.881883 \\ \hline
		N=50                                                      & 500.563845 & 3803.533813 & 14617.581402 & 39943.430972 \\ \hline
		N=60                                                      & 500.563874 & 3803.535512 & 14617.606815 & 39943.623511 \\ \hline
	\end{tabular}
	\caption{First $4$ eigenvalues (top row) and their numerical approximations using a grid of $N=10-60$ nodes.}
\label{table-11}
\end{table}

\end{rem}

                We now proceed  to prove the ``optimal'' estimate. Compare ~\eqref{eqestimatefirstev}
          and Remark ~\ref{remestfirstev}
            in what concerns the first eigenvalue.

            \begin{thm}[\textbf{Optimal rate of convergence of discrete eigenvalues}]\label{thmoptimalallevs}
               Fix an integer $k\geq 1$ and consider the discrete eigenvalue $\lambda_{h,k}$  as a function of $h=\frac1N,\,N=k+1,k+2,\ldots.$ Then there exists a constant $C>0,$ depending only on $k,$ such that
                  \be\label{eqrateevh4}
                  |\lambda_k-\lambda_{h,k}|\leq C h^4.
                  \ee
            \end{thm}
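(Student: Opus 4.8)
The plan is to combine the two ingredients already assembled: the ``collective'' estimate of Lemma~\ref{lemconvergence}, which correctly pairs the $k$-th discrete reciprocal $\lambda_{h,k}^{-1}$ with the $k$-th continuous reciprocal $\lambda_k^{-1}$ but yields only an $O(h)$ rate, and the ``local'' estimate of Proposition~\ref{propdistLambdah}, which delivers the sharp $O(h^4)$ rate but merely asserts that \emph{some} element of $\Lambda_h^{-1}$ lies near $\lambda_k^{-1}$. The entire difficulty is to show that the element produced by Proposition~\ref{propdistLambdah} is in fact $\lambda_{h,k}^{-1}$ itself; once the index is correctly identified, the optimal rate is immediate.

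First I would use Lemma~\ref{lemconvergence} to secure the qualitative index matching. For any fixed $j$ and all $N>j$, the bound $\sum_{m=1}^{N-1}|\lambda_m^{-1}-\lambda_{h,m}^{-1}|^2\le Ch^2$ forces $|\lambda_j^{-1}-\lambda_{h,j}^{-1}|\le C^{1/2}h$, so that $\lambda_{h,j}^{-1}\to\lambda_j^{-1}$ as $h\to 0$, with the indices correctly aligned. Next I would exploit the simplicity of the continuous spectrum (Claim~\ref{claimcontevs}) to isolate $\lambda_{h,k}^{-1}$. Since $\lambda_1^{-1}>\lambda_2^{-1}>\cdots$, set $\rho_k=\min\{\lambda_{k-1}^{-1}-\lambda_k^{-1},\,\lambda_k^{-1}-\lambda_{k+1}^{-1}\}>0$, with the obvious modification when $k=1$. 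By the previous step, for $h$ small enough one has $\lambda_{h,k-1}^{-1}\ge\lambda_k^{-1}+\rho_k/2$ and $\lambda_{h,k+1}^{-1}\le\lambda_k^{-1}-\rho_k/2$; invoking the monotone ordering of $\Lambda_h^{-1}$, every discrete reciprocal with index $\le k-1$ then lies at or above $\lambda_k^{-1}+\rho_k/2$, and every one with index $\ge k+1$ lies at or below $\lambda_k^{-1}-\rho_k/2$. Hence $\lambda_{h,k}^{-1}$ is the \emph{unique} point of $\Lambda_h^{-1}$ in the interval $(\lambda_k^{-1}-\rho_k/2,\,\lambda_k^{-1}+\rho_k/2)$.

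Finally I would invoke Proposition~\ref{propdistLambdah}: there is $C>0$ with $\mathrm{dist}\{\lambda_k^{-1},\Lambda_h^{-1}\}\le Ch^4$. Choosing $h_0$ so small that in addition $Ch^4<\rho_k/2$ for all $0<h<h_0$, the nearest discrete reciprocal to $\lambda_k^{-1}$ must lie in the isolating interval above, hence coincides with $\lambda_{h,k}^{-1}$; therefore $|\lambda_k^{-1}-\lambda_{h,k}^{-1}|\le Ch^4$. Passing back to the eigenvalues through the identity $|\lambda_k-\lambda_{h,k}|=\lambda_k\lambda_{h,k}\,|\lambda_k^{-1}-\lambda_{h,k}^{-1}|$, and using $\lambda_{h,k}\to\lambda_k$ to bound the prefactor by $2\lambda_k^2$ for $h$ small, yields \eqref{eqrateevh4} with a constant depending only on $k$.

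The one genuinely delicate point is the isolation step. The sharp-rate Proposition~\ref{propdistLambdah} by itself controls neither the index of the nearby discrete eigenvalue nor the possibility that a high-index (``spurious'') discrete reciprocal lands accidentally near $\lambda_k^{-1}$; the matching is pinned down only by feeding in the weaker but index-faithful Lemma~\ref{lemconvergence} together with the separation $\rho_k>0$ of the continuous spectrum. Everything after the index is fixed is routine.
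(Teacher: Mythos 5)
Your proposal is correct and follows essentially the same route as the paper's own proof, which likewise combines the index-faithful bound \eqref{eqratelambdadiff} from Lemma~\ref{lemconvergence} with the separation of the continuous reciprocals and the $O(h^4)$ distance estimate of Proposition~\ref{propdistLambdah} to identify $\lambda_{h,k}^{-1}$ as the unique discrete reciprocal near $\lambda_k^{-1}$. Your writeup simply makes explicit two steps the paper leaves terse: the isolation argument via the monotone ordering of $\Lambda_h^{-1}$, and the passage from reciprocals back to the eigenvalues themselves.
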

            \begin{proof} In view of ~\eqref{eqratelambdadiff} we have
               $$\lim\limits_{h\to 0}\lambda_{h,k}^{-1}=\lambda_k^{-1}.$$
               The sequence $\Lambda^{-1}$ of the reciprocals of the exact eigenvalues (see ~\eqref{eqLmbdamin1}) is monotone decreasing, so there exist $h_0>0$ and $\eta>0,$ such that
                  \be
                    |\lambda_{h,k}^{-1}- \lambda_j^{-1}  |\geq\eta,\quad h<h_0,\,\,j\neq k.
                  \ee
                     Combined with Proposition ~\ref{propdistLambdah} we infer that the only (inverse) eigenvalue that can be ``close'' to $\lambda_k^{-1}$ is $\lambda_{h,k}^{-1},$ and that
                     $$|\lambda_k^{-1}-\lambda_{h,k}^{-1}|\leq Ch^4,$$
                     thus concluding the proof of the theorem.
            \end{proof}

               \begin{rem} Observe that in the proof of Theorem ~\ref{thmoptimalallevs} we relied on special properties of the kernel, via Proposition  ~\ref{propdistLambdah}. Without using such information we obtain ``sub-optimal'' estimates. For example, ~\eqref{eqthmsquaredifference} implies
                     $$\suml_{k=N}^\infty\lambda_{k}^{-2}\leq CN^{-2},$$
                     which is  not optimal, in view of Claim ~\ref{claimcontevs}. Compare also to the estimate in ~\eqref{eqestimgammaGammah} which can be written as
                     $$|\suml_{i=1}^\infty \lambda_i^{-1}-\suml_{i=1}^{N-1} \lambda_{h,i}^{-1}|\leq Ch^4.$$

               \end{rem}
               \begin{rem} The $O(h^4)$ rate of convergence, as stated in Theorem ~\ref{thmoptimalallevs}, can be compared to the method of collocation approximation ~\cite{deboor}. In the case of the latter , achieving a similar rate of convergence requires the construction of an interpolating $C^3$ piecewise fifth-order polynomial function, and then using collocation at Gaussian points. The results here were obtained by using the discretized kernel (of the inverse operator). Owing to the observed connection between this kernel and the classical ($C^2$) cubic splines, the approximating eigenvalues are in fact those of the fourth-order (distributional) derivative of the interpolating cubic spline at the grid points (Proposition ~\ref{cor3rdspljump}).
               \end{rem}
               
               \appendix

                        \section{\textbf{ THE  DISCRETE BIHARMONIC OPERATOR: GENERATING POLYNOMIALS}}\label{sechaggaiexplicit}

          Consider again the discrete fourth-order equation
          \begin{equation}\label{le}\delta^4_x\fu=\ff,\end{equation}
          where $\fu,\,\fu_x\in l^2_{h,0}.$
          In this section we  obtain a direct proof of Corollary ~\ref{cordiscresolvent}.  In other words, we
          compute the matrix corresponding to the operator $(\delta^4_x)^{-1},$ without recourse to the theory of cubic spline functions involved in the previous proof. In fact, an  expression for this matrix has already been given in ~\cite[Section 10.6, Eq. (10.137)]{book} and was used as the main tool in proving Claim ~\ref{claim21}. However, the expression there was a product of three matrices, based on the  matrix representation of the Hermitian derivative. Thus, while allowing to obtain the aforementioned estimates, it did not yield an ``explicit'' form (such that can be used in a computer code in a straightforward way).

           Remarkably, the methodology expounded here uses the discrete operators in a totally different way;
          it employs generating functions, and is a systematic approach that can also be
          applied to other problems. Although the computations involved require some work, it has the advantage of being a straightforward application of the definitions of the discrete operators. It should be mentioned that we first carried out the computation here, and it motivated our search for a parallel ``functional interpreation'', as expressed in
          Corollary ~\ref{cordiscresolvent}.

        By (\ref{d4}), Equation  (\ref{le}) can be rewritten as
\begin{equation}\label{e1}\frac{12}{h^2}\left[\frac{(\fu_{x})_{j+1}-(\fu_{x})_{j-1}}{2h}-\frac{\fu_{j+1}+\fu_{j-1}-2\fu_j}{h^2} \right]=\ff_j,\;\;\;1\leq j\leq N-1,\end{equation}
where by (\ref{eqdefhermit}),
\begin{equation}\label{e2}\frac{1}{6}(\fu_{x})_{j-1}+\frac{2}{3}(\fu_{x})_{j}+\frac{1}{6}(\fu_{x})_{j+1}=\frac{\fu_{j+1}-\fu_{j-1} }{2h},\;\;\;1\leq j\leq N-1,\end{equation}
\begin{equation}\fu_0=\fu_N=(\fu_x)_0=(\fu_x)_N=0.\end{equation}
The system ~\eqref{e1},~\eqref{e2} must be solved for $\set{\fu_j,(\fu_x)_j}_{j=1}^{N-1}.$

To do this, we introduce generating functions, which are polynomials of degree $N-1$ in the variable $z$:
$$p(z)=\sum_{j=1}^{N-1} \fu_jz^j,\;\;\;q(z)=\sum_{j=1}^{N-1} (\fu_x)_j z^j,\;\;\;\phi(z)=\sum_{j=1}^{N-1} \ff_j z^j.$$
We know $\phi(z)$ and want to find $p(z),q(z)$.

Equations (\ref{e1}) can be encoded as the following equality of polynomials,
\be\label{g1}
\frac{1}{2h}\left[(z^{-1}-z)q(z)-(\fu_x)_1+(\fu_x)_{N-1}z^N \right]
-\frac{1}{h^2}\left[(z+z^{-1}-2)p(z)-\fu_1-z^N \fu_{N-1} \right]=\frac{h^2}{12}\phi(z).
\ee
Similarly, Equations (\ref{e2}) are equivalent to the following polynomial equality
\be\label{g2}\left(\frac{1}{6}z^{-1}+\frac{2}{3}+\frac{1}{6}z \right)q(z)-\frac{1}{6}(\fu_x)_1-\frac{1}{6}z^N(\fu_x)_{N-1}
=\frac{1}{2h}\left[(z^{-1}-z)p(z)-\fu_1+z^N\fu_{N-1} \right].\ee
Multiplying (\ref{g1}),(\ref{g2}) by $z$, and rearranging, we have
\be\label{ee1}\aligned
\frac{1}{h^2}(z^2-2z+1)p(z)+\frac{1}{2h}(z^2-1)q(z) \hspace{100pt}\\
=-\frac{h^2}{12}z\phi(z)+\frac{1}{h^2}\left[\fu_1 z+z^{N+1}\fu_{N-1} \right]+\frac{1}{2h}\left[-(\fu_x)_1z+(\fu_x)_{N-1}z^{N+1} \right],
\endaligned\ee
\be\label{ee2}\frac{1}{2h}(z^2-1)p(z) +\left(\frac{1}{6}z^2+\frac{2}{3}z+\frac{1}{6} \right)q(z)
=\frac{1}{2h}\left[-\fu_{1}z+z^{N+1}\fu_{N-1}\right] +\frac{1}{6}(\fu_x)_1z+\frac{1}{6}z^{N+1}(\fu_x)_{N-1}. \ee
We now solve the system of two linear equations (\ref{ee1}),(\ref{ee2}) for $p(z),q(z)$. It suffices to write the solution for $p(z)$, which is
\begin{equation}\label{pe}p(z)=\frac{12h^2}{(z-1)^4}\cdot r(z), \end{equation}
where
\begin{eqnarray}&&\label{rr}r(z)= \left(\frac{1}{6}z^2+\frac{2}{3}z+\frac{1}{6} \right)
\cdot  \frac{h^2}{12}\cdot z\phi(z)\\
&-& \left(\frac{1}{6}z^2+\frac{2}{3}z+\frac{1}{6} \right)\left(\frac{1}{h^2}\left[\fu_{1}z+z^{N+1}\fu_{N-1} \right]+\frac{1}{2h}\left[-(\fu_x)_1z+(\fu_x)_{N-1}z^{N+1} \right]\right)\nonumber\\
&+&\frac{1}{2h}(z^2-1)\left[\frac{1}{2h}\left[-\fu_{1}z+z^{N+1}\fu_{N-1}\right] +\frac{1}{6}(\fu_x)_1z+\frac{1}{6}z^{N+1}(\fu_x)_{N-1} \right]\nonumber\end{eqnarray}

It should be noted that the expression (\ref{rr}) contains the unknown quantities $\fu_{1},\fu_{N-1},(\fu_x)_1,(\fu_x)_{N-1}$. Once we determine these
quantities, (\ref{pe}) will give us the solution to the system (\ref{e1}),(\ref{e2}). To find these quantities, we exploit the
following fact: since $p(z)$ is a polynomial, while the expression (\ref{pe}) contains $(z-1)^4$ in the denominator, it
must be the case that $z=1$ is a root of $r(z)$ of multiplicity $4$, that is
\begin{equation}r(1)=r'(1)=r''(1)=r'''(1)=0.\end{equation}
By differentiating $r(z)$ three times and then substituting $z=1$, we obtain $4$ equations for
$\fu_{1},\fu_{N-1},(\fu_x)_1,(\fu_x)_{N-1}$.
\begin{equation}
\label{r1}r(1)=
\frac{h^2}{12}\cdot \phi(1)- \frac{1}{h^2}\left[\fu_{1}+\fu_{N-1} \right]-\frac{1}{2h}\left[-(\fu_x)_1+(\fu_x)_{N-1} \right]=0
\end{equation}
\begin{eqnarray}\label{r2}&&r'(1)=
h^2\cdot \phi(1)+
 \frac{h^2}{12}\cdot \phi'(1)\\
&-&\frac{1}{h^2}\left[\frac{5}{2}\fu_{1}+\left(N+\frac{3}{2}\right)\fu_{N-1} \right]-\frac{1}{2h}\left[-\frac{7}{3}(\fu_x)_1+\left(N+\frac{5}{3}\right)(\fu_x)_{N-1} \right]=0\nonumber\end{eqnarray}
\begin{eqnarray}\label{r3}&&r''(1)=  \frac{7h^2}{36}\cdot \phi(1)+
\frac{h^2}{3}\cdot \phi'(1)+
 \frac{h^2}{12}\cdot \phi''(1)\\
 &-&\frac{1}{h^2}\left[\frac{23}{6}\fu_1 +\left(\frac{5}{6}+2N+N^2 \right)\fu_{N-1} \right]\nonumber\\
 &-&\frac{1}{2h}\left[ -\frac{10}{3}(\fu_x)_1 +\left( \frac{4}{3}+\frac{7}{3}N+N^2\right)(\fu_x)_{N-1}\right]=0\nonumber\end{eqnarray}
\begin{eqnarray}\label{r4}&&r'''(1)=
   \frac{h^2}{12}\cdot \phi(1)+ \frac{7h^2}{12}\cdot \phi'(1)+
 \frac{h^2}{2}\cdot \phi''(1)+ \frac{h^2}{12}\cdot \phi'''(1)\\
&-&\frac{1}{h^2}\left[ \frac{5}{2}\fu_1  + \left(-\frac{1}{2}+\frac{3}{2}N^2+N^3 \right) \fu_{N-1}\right]\nonumber\\
&-&\frac{1}{2h}\left[ -2(\fu_x)_1  +\left(N+2N^2+N^3 \right) (\fu_x)_{N-1}\right]=0.\nonumber\end{eqnarray}
We set
$$m_0=\phi(1),\;\; m_1=\phi'(1),\;\; m_2=\phi''(1),\;\; m_3=\phi'''(1),$$
solve the linear system (\ref{r1})-(\ref{r4}) for $\fu_{1},\fu_{N-1},(\fu_x)_1,(\fu_x)_{N-1}$, and then
substitute these values into (\ref{rr}),(\ref{pe}), to obtain the expression
\begin{eqnarray}\label{solp}&&p(z)=\frac{1}{6N^4}\Big[\frac{z(z^2+4z+1)}{(z-1)^4}\cdot \phi(z)\\
&+&\frac{z}{(z-1)^4}\Big((-m_0+3m_1)-\frac{1}{N}(6m_1+6m_2)\nonumber\\
&+&\frac{1}{N^2}(6m_1+12m_2+3m_3)-\frac{1}{N^3}(2m_1+6m_2+2m_3) \Big)\nonumber\\
&+&\frac{4z^2}{(z-1)^4}\Big(-m_0+\frac{1}{N^2}(3m_1+3m_2)-\frac{1}{N^3}(2m_1+6m_2+2m_3)\Big)\nonumber\\
&+&\frac{z^3}{(z-1)^4}\Big(-(m_0+3m_1)+\frac{1}{N}(6m_1+6m_2)\nonumber\\&-&\frac{1}{N^2}(6m_2+3m_3)-\frac{1}{N^3}(2m_1+6m_2+2m_3) \Big)\nonumber\\
&+&\frac{z^{N+1}}{(z-1)^4}\Big(-\frac{1}{N} (3m_1+3m_2) +\frac{1}{N^2}(6m_2+3m_3)  +\frac{1}{N^3}(2m_1+6m_2 +2m_3) \Big)\nonumber\\
&+&\frac{z^{N+2}}{(z-1)^4}\Big(\frac{1}{N^2}(-12m_1-12m_2) +\frac{1}{N^3}(8m_1+24m_2+8m_3) \Big)\nonumber\\
&+&\frac{z^{N+3}}{(z-1)^4}\Big(\frac{1}{N}(3m_1+3m_2)-\frac{1}{N^2}(6m_1+12m_2+3m_3)\nonumber\\&+&\frac{1}{N^3}(2m_1+6m_2+2m_3) \Big)\Big].\nonumber\end{eqnarray}
Note that since $p(z)$ is a polynomial of degree $N-1$, all terms $z^j$ ($j\geq N$) in fact cancel.
We explicitly compute the coefficient of the term $z^j$ ($1\leq j\leq N-1$) in $p(z)$, which gives us $\fu_j$.
Using
$$\frac{1}{(z-1)^4}= \frac{1}{6}\sum_{j=0}^\infty (j+1)(j+2)(j+3) z^{j},\;\;\;\frac{z}{(z-1)^4}= \frac{1}{6}\sum_{j=0}^\infty j(j+1)(j+2) z^{j},$$
$$\frac{z^2}{(z-1)^4}= \frac{1}{6}\sum_{j=0}^\infty (j-1)j(j+1) z^{j},\;\;\;\frac{z^3}{(z-1)^4}= \frac{1}{6}\sum_{j=0}^\infty (j-2)(j-1)j z^{j},$$
$$\frac{z(z^2+4z+1)}{(z-1)^4}  =\sum_{j=0}^\infty j^3 z^{j},  $$
we have
$$\frac{z(z^2+4z+1)}{(z-1)^4}\phi(z)=\sum_{j=1}^{N-1} z^j \sum_{l=1}^{j-1} (j-l)^3 \ff_l+ ({\mbox{terms of order }}\geq N),$$
so that the coefficient of $z^j$ ($1\leq j\leq N-1$) in $p(z)$ is
$$\fu_j=\frac{j}{6N^4}\Big[  \frac{1}{j}\sum_{l=1}^{j-1} (j-l)^3 \ff_l$$
$$+\frac{1}{6}(j+1)(j+2)\Big((-m_0+3m_1)-\frac{1}{N}(6m_1+6m_2)$$
$$+\frac{1}{N^2}(6m_1+12m_2+3m_3)-\frac{1}{N^3}(2m_1+6m_2+2m_3) \Big)$$
$$+\frac{2}{3}(j-1)(j+1)\Big(-m_0+\frac{1}{N^2}(3m_1+3m_2)-\frac{1}{N^3}(2m_1+6m_2+2m_3)\Big)$$
$$+\frac{1}{6}(j-2)(j-1)\Big(-(m_0+3m_1)+\frac{1}{N}(6m_1+6m_2)-\frac{1}{N^2}(6m_2+3m_3)-\frac{1}{N^3}(2m_1+6m_2+2m_3) \Big)\Big]$$
$$=\frac{j}{6N^4}\Big[  \frac{1}{j}\sum_{l=1}^{j-1} (j-l)^3 \ff_l+j\left(-j m_0+3 m_1\right)-\frac{1}{N}\cdot 6j(m_1+m_2)$$
$$+\frac{3j}{N^2}\left((j+1)m_1+(j+3)m_2+m_3 \right)-\frac{2j^2}{N^3}\left( m_1+3m_2+m_3 \right)\Big].$$
We now note that
$$m_0=\phi(1)=\sum_{k=1}^{N-1} \ff_k,\;\;\;m_1=\phi'(1)=\sum_{k=1}^{N-1} k\ff_k,$$
$$m_2=\phi''(1)=\sum_{k=2}^{N-1} k(k-1)\ff_k,\;\;\;m_3=\phi'''(1)=\sum_{k=2}^{N-1} k(k-1)(k-2)\ff_k,$$
so that
$$m_1+m_2= \sum_{k=1}^{N-1} [k+k(k-1)]\ff_k=\sum_{k=1}^{N-1} k^2\ff_k $$
$$m_1+3m_2+m_3=\sum_{k=1}^{N-1} k^3\ff_k,$$
and using these results the expression for $\fu_j$ simplifies to
$$\fu_j=\frac{1}{6N}\Big[  \sum_{k=1}^{j-1} \left( \frac{j}{N}-\frac{k}{N}\right)^3 \ff_k- \left(\frac{j}{N}\right)^2\cdot \sum_{k=1}^{N-1} \left(1-\frac{k}{N} \right)\left(2\frac{k}{N}\cdot \frac{j}{N}+ \frac{j}{N}-3\frac{k}{N} \right)\ff_k\Big]$$
$$=\frac{1}{6N}\Big[  \sum_{k=1}^{j-1} \left( x_j-x_k\right)^3 \ff_k+ x_j^2\cdot \sum_{k=1}^{N-1} \left(1-x_k \right)^2\left(2(1-x_j)x_k+x_k-x_j \right)\ff_k\Big]$$
$$=\frac{1}{6N}\Big[ (1-x_j)^2 \cdot \sum_{k=1}^{j-1} x_k^2\left(2x_j(1-x_k)+x_j-x_k \right) \ff_k+ x_j^2\cdot \sum_{k=j}^{N-1} \left(1-x_k \right)^2\left(2x_k(1-x_j)+x_k-x_j \right)\ff_k\Big].$$
We have thus obtained
\begin{prop}\label{formula} Defining the matrix elements
$$K^h_{j,k}=\begin{cases}\frac{1}{6N}\cdot (1-x_j)^2\cdot x_k^2 \left(2x_j(1-x_k)+x_j-x_k \right),& 1\leq k \leq j\leq N-1,\\
\frac{1}{6N}\cdot (1-x_k)^2\cdot x_j^2 \left(2x_k(1-x_j)+x_k-x_j \right),&1\leq j \leq k\leq N-1,
\end{cases}$$
we have that the solution of (\ref{le}) is given by
$$\fu_j= \sum_{k=1}^{N-1} K^h_{j,k} \ff_k.$$
\end{prop}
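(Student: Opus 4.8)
The plan is to bypass the spline theory of the body of the paper and to solve the linear system \eqref{le} directly, treating it as the coupled recurrences \eqref{e1}--\eqref{e2} in the unknowns $\{\fu_j,(\fu_x)_j\}_{j=1}^{N-1}$, by the method of generating functions. First I would attach to the three grid functions the polynomials $p(z)=\sum_{j=1}^{N-1}\fu_j z^j$, $q(z)=\sum_{j=1}^{N-1}(\fu_x)_j z^j$ and $\phi(z)=\sum_{j=1}^{N-1}\ff_j z^j$, and translate each shift $\fu_{j\pm1}$ into multiplication of $p(z)$ by $z^{\mp1}$. The essential care at this stage is the bookkeeping of the endpoints: applying the translated operators reproduces \eqref{e1}--\eqref{e2} for all interior $j$ only up to boundary correction terms carried by $\fu_1,\fu_{N-1},(\fu_x)_1,(\fu_x)_{N-1}$ together with powers $z^{N}$, and this is precisely what produces the inhomogeneous polynomial identities \eqref{g1}--\eqref{g2}.

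Next I would clear denominators by multiplying through by $z$ to reach the $2\times2$ linear system \eqref{ee1}--\eqref{ee2} for $p(z),q(z)$ with polynomial coefficients, and solve it by Cramer's rule. The determinant of the coefficient matrix factors as a constant multiple of $(z-1)^4$, so $p(z)$ emerges in the form \eqref{pe}, $p(z)=12h^2\,r(z)/(z-1)^4$, with $r(z)$ given by \eqref{rr}. The decisive structural observation is that $p(z)$ must in fact be a polynomial of degree $N-1$; hence $(z-1)^4$ must divide $r(z)$, that is $r(1)=r'(1)=r''(1)=r'''(1)=0$. Differentiating $r$ three times and evaluating at $z=1$ yields four linear equations in the four still-unknown endpoint values $\fu_1,\fu_{N-1},(\fu_x)_1,(\fu_x)_{N-1}$, which can be solved explicitly in terms of the moments $m_k=\phi^{(k)}(1)$, $k=0,1,2,3$.

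The remaining step is purely computational: substitute the recovered endpoint values back into \eqref{pe}, expand $1/(z-1)^4$ and its shifts as the power series $\tfrac16\sum_j (j+1)(j+2)(j+3)z^{j}$ (and its translates), and read off the coefficient of $z^j$ to obtain $\fu_j$. I would then rewrite the moments as $m_0=\sum_k\ff_k$ and $m_1=\sum_k k\ff_k$, and use the identities $m_1+m_2=\sum_k k^2\ff_k$ and $m_1+3m_2+m_3=\sum_k k^3\ff_k$ to collapse the expression into two sums, over $k<j$ and over $k\geq j$; restoring $x_j=j/N$, $x_k=k/N$ and $h=1/N$ then presents $\fu_j=\sum_k K^h_{j,k}\ff_k$ with the claimed closed form. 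The main obstacle is not conceptual but the sheer algebraic bookkeeping: keeping the $z^{N}$ boundary corrections consistent through \eqref{g1}--\eqref{ee2}, inverting the $4\times4$ endpoint system correctly, and carrying out the coefficient extraction and moment simplification without error.

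As an independent check I would verify that the resulting matrix coincides with $hK(x_j,x_k)$ for the continuous Green's kernel $K$ of \eqref{eqkernelcont}. This would reproduce Corollary \ref{cordiscresolvent} by an entirely different route, thereby confirming both the present computation and the spline-based identity $K^h_{i,j}=hK(x_i,x_j)$.
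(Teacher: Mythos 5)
Your proposal is correct and follows essentially the same route as the paper's own proof in Appendix~\ref{sechaggaiexplicit}: generating polynomials with boundary corrections giving \eqref{g1}--\eqref{g2}, elimination to reach $p(z)=12h^2\,r(z)/(z-1)^4$ (the determinant is indeed $-(z-1)^4/(12h^2)$), the divisibility conditions $r(1)=r'(1)=r''(1)=r'''(1)=0$ determining $\fu_1,\fu_{N-1},(\fu_x)_1,(\fu_x)_{N-1}$ via the moments $m_k=\phi^{(k)}(1)$, and coefficient extraction with the same moment identities. Even your final cross-check against $hK(x_j,x_k)$ mirrors the paper's closing remark identifying the result with Corollary~\ref{cordiscresolvent}.
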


This expression is seen to be identical to ~\eqref{eqschwartzkernel},
so that Proposition ~\ref{formula} is a re-statement of Corollary ~\ref{cordiscresolvent}.

            %    We extend the finite set $\Lambda_h^{-1}$ to an infinite set by adding countably many zeros, and denote %the extended set by $\widetilde{\Lambda_h^{-1}}.$

             %   Since both $\Lcal^{-1},\,\,\Lcal_h^{-1}$ are positive definite and compact, Theorem II in ~\cite{kato} can %be invoked, yielding
              %  \be\label{eqdiffsqsevs}
              %  \suml_{j=1}^\infty|\lambda_{j}^{-1}-\mu_{j}^{-1}|^2\leq %\int\limits_0^1\int\limits_0^1|K(x,y)-K_h(x,y)|^2dxdy,
               % \ee
               % where the sequence $\set{\mu_{j}^{-1}}_{j=1}^\infty$ is a rearrangement of $\widetilde{\Lambda_h^{-1}}.$

%We now show an interesting connection between the matrix obtained in the previous result and
% the kernel (Green's function) of the biharmonic problem
%\begin{eqnarray}\label{bh}&&\Big(\frac{d}{dx} \Big)^4 u(x)=f(x),\\
%&&u\in H^4(\Omega)\cap H_0^2(\Omega),\nonumber\end{eqnarray}
%the problem to which (\ref{le}) is the discrete analog.
%
%A standard computation leads to the following
%\begin{claim}\label{green}
%The solution of (\ref{bh}) is given by
%$$u(x)= \int_0^1 K(x,y)f(y)dy,$$
%where
%$$K(x,y)=\begin{cases}
%\frac{1}{6}(1-x)^2y^2[2x(1-y)+ x-y], & y<x\\
%\frac{1}{6}x^2(1-y)^2[2y(1-x)+y-x], & x<y
%\end{cases}.$$
%\end{claim}
%Comparing the results of proposition \ref{formula} and claim \ref{green}, we see that the matrix elements $K_{jk}$ are
%identical with the values of kernel $K(x,y)$:
%$$K_{jk}=h\cdot K(x_j,x_k).$$
%This given another indication of the naturalness of (\ref{le}) as a discretization of (\ref{bh}).
%\newpage

\end{document}